\documentclass[twoside,10pt]{article}
\usepackage{geometry}
\usepackage{graphicx}
\usepackage{indentfirst}                                
\usepackage[colorlinks]{hyperref}
\hypersetup{linkcolor=blue,filecolor=black,urlcolor=blue, citecolor=black}   
\usepackage[numbers,sort&compress]{natbib}         
\usepackage[perpage,symbol*,marginal]{footmisc}   

\usepackage{amsmath}                                
\usepackage{amsfonts}
\usepackage{amssymb}                                
\usepackage{bm}                                            
\usepackage{mathrsfs}                                 
\usepackage{amsthm}                                   
\usepackage{amsxtra}
\usepackage{pifont}
\usepackage{authblk}
\allowdisplaybreaks[4]

\ifxetex
\usepackage{letltxmacro}
\LetLtxMacro\SavedIncludeGraphics\includegraphics
\def\includegraphics#1#{
	\IncludeGraphicsAux{#1}%
}%
\newcommand*{\IncludeGraphicsAux}[2]{%
	\XeTeXLinkBox{%
		\SavedIncludeGraphics#1{#2}%
}}
\fi
\newcommand\orcidicon[1]{\href{https://orcid.org/#1}{\includegraphics[scale=0.02]{orcid.pdf}}}

\usepackage[titletoc,title]{appendix}
\usepackage{titlesec}
\titleformat{\section}{\large\bfseries\centering}{\thesection}{1em}{}
\titleformat{\subsection}{\centering\bfseries}{\thesubsection}{0.5em}{}
\titleformat{\subsubsection}[runin]{\bfseries}{\thesubsubsection.}{0.4em}{}[.]

\numberwithin{equation}{section}
\newtheorem{lemma}{Lemma}[section]
\newtheorem{proposition}[lemma]{Proposition}
\newtheorem{theorem}{Theorem}[section]

\newtheorem{corollary}[lemma]{Corollary}
\newtheorem{remark}{Remark}[section]

\usepackage{color}
\definecolor{DarkRed}{RGB}{139,0,0}
\definecolor{Purple}{RGB}{128,0,128}
\usepackage{ulem}

	\usepackage{accents}
	\makeatletter
	\def\wideubar{\underaccent{{\cc@style\underline{\mskip10mu}}}}
	\def\Wideubar{\underaccent{{\cc@style\underline{\mskip8mu}}}}
	\makeatother
	\makeatletter
	\def\widebar{\accentset{{\cc@style\underline{\mskip10mu}}}}
	\def\Widebar{\accentset{{\cc@style\underline{\mskip8mu}}}}
	\makeatother
	\newcommand{\VERTiii}[1]{{\left\vert\kern-0.3ex\left\vert\kern-0.3ex\left\vert #1
			\right\vert\kern-0.3ex\right\vert\kern-0.3ex\right\vert}}
	\newcommand{\VERT}{\vert\kern-0.3ex\vert\kern-0.3ex\vert}
	\newcommand{\VERTl}{\left\vert\kern-0.3ex\left\vert\kern-0.3ex\left\vert}
	\newcommand{\VERTr}{\right\vert\kern-0.3ex\right\vert\kern-0.3ex\right\vert}
	\newcommand{\VERTbig}{\big\vert\kern-0.3ex\big\vert\kern-0.3ex\big\vert}
	\newcommand{\VERTBig}{\Big\vert\kern-0.3ex\Big\vert\kern-0.3ex\Big\vert}
\begin{document}
		
		\title{\bf The Binary–ternary Boltzmann Equation Near Equilibrium
		}
		

		 \author[a]{Xianshen Hu\thanks{Email: xianshen.hu@connect.polyu.hk}}
		 \author[b,c]{Linjie Xiong\thanks{Corresponding author. Email: xlj@hnu.edu.cn}}
		  \author[b]{Zhihong Xu\thanks{Email: zhihongxu@hnu.edu.cn}}
		 \affil[a]{\small Department of Applied Mathematics, The Hong Kong Polytechnic University, Hong Kong, China}
		 \affil[b]{\small School of Mathematics, Hunan University, Changsha 410082, China}
		 \affil[c]{\small Hunan Provincial Key Laboratory of Intelligent Information Processing and Applied Mathematics, Hunan University, Changsha 410082, China}

		\date{\empty}
		\maketitle

		\begin{abstract}
			 The classical Boltzmann equation, formulated by Boltzmann in 1872, describes the statistical behavior of a dilute gas in a non-equilibrium state. While the global well-posedness of the classical Boltzmann equation has been extensively studied and is relatively well-understood, this model becomes inadequate when the gas is no longer sufficiently dilute. Therefore, it is necessary to investigate higher-order extensions of the Boltzmann equation that account for multi-particle interactions. Recently, Ampatzoglou and Pavlović provided the rigorous derivation of both the ternary [\textit{Comm. Math. Phys.} \textbf{387} (2021), no. 2, 793–863.] and the binary-ternary [\textit{Forum Math. Sigma} \textbf{13} (2025), Paper No. e52, 95 pp.] Boltzmann equations from a classical particle system, yielding an explicit form of the ternary collision operator. However, the global existence near equilibrium for the binary-ternary Boltzmann equation remains an interesting and fundamental question, a gap that this paper seeks to address as its primary contribution.
		\end{abstract}
		
		\vspace*{2mm}
		{\small
			\noindent{\bf Keywords}:
			The binary-ternary Boltzmann equation,  
			global existence,
			near equilibrium,
			energy method
			
			\vspace*{2mm}
			{
				\tableofcontents
			}
			
			
			
			
			

		}
		
		\bigskip
		
		\section{Introduction}\label{sec:intro}
		The binary-ternary Boltzmann transport equation under consideration governs the time evolution of a particle distribution function \( f(t,x,v)\ge0 \) in non-equilibrium dilute gases and is expressed as:
		\begin{align}\label{BT-f}
			\left\{
			\begin{aligned}
				&\partial_t f + v \cdot \nabla_x f = Q_{B}(f,f) + Q_{T}(f,f,f), \quad (t,x,v) \in (0,\infty) \times \mathbb{R}_{x}^3 \times \mathbb{R}_{v}^3,\\[0.5mm] 
				&f(0,x,v) = f_0(x,v), \quad (x,v) \in \mathbb{R}_{x}^3 \times \mathbb{R}_{v}^3.\\
			\end{aligned}
			\right.
		\end{align}  
	This kinetic equation describes the statistical behavior of a system of particles undergoing both binary and ternary collisions. The binary collision operator \( Q_{B}(f,f) \) (quadratic) and the ternary operator \( Q_{T}(f,f,f) \) (cubic) model two-body and three-body interactions, respectively.  
	The initial condition \( f_0 \ge 0 \) specifies the phase-space distribution at time \( t = 0 \). 
	When the term \( Q_{T}(f,f,f) \) on the right-hand side of equation \eqref{BT-f} is absent, the system reduces to the classical Boltzmann equation, which describes the statistical behavior of a dilute gas in a non-equilibrium state \cite{MR1307620,Glassey1996}. 
	However, in many realistic scenarios, the gas is sufficiently dense that higher-order collisions occur more frequently, thereby exerting a significant influence on its evolution. Consequently, such interactions must be taken into account.  
	Recently, the work of Ampatzoglou-Pavlović provided the rigorous derivation of both the ternary Boltzmann equation \cite{MR4337060,MR4315662} and a binary-ternary Boltzmann equation \cite{MR4874187} from a classical system of particles, yielding the explicit form of the ternary collision operator \( Q_{T}(f,f,f) \). This extends the classical Boltzmann framework to systems such as colloids, in which higher-order interactions play a critical role in the dynamics beyond the dilute-gas regime governed solely by binary collisions. 
	This generalized transport equation offers a broader framework for modeling gas dynamics where multi-body interactions are non-negligible, particularly in intermediate to high-density regimes.	
	Moreover, it is noteworthy that Ampatzoglou, Pavlović, and Warner derived the arbitrarily high-order Boltzmann equation for hard spheres in a subsequent work \cite{ampatzoglouWarner}.

		The binary collision operator \( Q_B(f,g) \) in \eqref{BT-f} is defined as  
		\begin{equation}\label{Q_B(f,g)}
			Q_B(f,g) = \int_{\mathbb{S}^2 \times \mathbb{R}^3} B_2(u, \omega) \left( f'g'_1 - fg_1 \right) d\omega dv_1,  
		\end{equation} 
		where \( \omega \in \mathbb{S}^2 \) parametrizes the collision direction, \( u = v_1 - v \) is the pre-collisional relative velocity, and \( f' \equiv f(v') \), \( g'_1 \equiv g(v'_1) \), \( f \equiv f(v) \), \( g_1 \equiv g(v_1) \). The post-collisional velocities \( v', v'_1 \) are determined by the binary collision law:  
		\begin{equation}\label{binary collision law}
			\begin{cases}
				v' = v + (\omega \cdot u)\omega, \\
				v'_1 = v_1 - (\omega \cdot u)\omega,
			\end{cases} 
		\end{equation} 
		satisfying momentum-energy conservation:  
		\begin{align}\label{b conservation} 
			\begin{aligned}
				v' + v'_1 &= v + v_1,\\[0.5mm]
				|v'|^2 + |v'_1|^2 &= |v|^2 + |v_1|^2. 
			\end{aligned} 
		\end{align}  
		For the post-collisional relative velocity \( u' = v'_1 - v' \), the binary micro-reversibility condition \( u' \cdot \omega = -u \cdot \omega \) and the conservation \( |u'| = |u| \) hold. 
		Given \( \omega \in \mathbb{S}^{2} \), the linear measure-preserving involution \( T_\omega: \mathbb{R}^{6} \to \mathbb{R}^{6} \), defined by \((v, v_1) \mapsto (v', v_1')\) via the binary collision law \eqref{binary collision law}, satisfies \( T_\omega^{-1} = T_\omega \) and \( |\det T_\omega| = 1 \), ensuring reversibility and phase-space volume conservation (see \cite{Glassey1996}, Lemma 1.6.1).
		
		The factor \( B_2(u, \omega) \) in the integrand of \eqref{Q_B(f,g)} is referred to as the “binary interaction differential cross-section”. It depends on the relative velocity \( u = v_1 - v \) and the direction vector \( \omega \in \mathbb{S}^2 \), and quantifies the transition probability of binary elastic collisions. This cross-section is assumed to take the separable form:
		\begin{equation}\label{B_2}
			B_2(u, \omega) = |u|^{\gamma_{2}} \, b_2(\hat{u} \cdot \omega), \quad \gamma_{2} \in (-3,1],
		\end{equation}
		where \( \hat{u} = u / |u| \) denotes the unit vector in the direction of the relative velocity.
		The binary angular function \( b_2: [-1,1] \to [0,+\infty) \) is an even function, and the relations \( u' \cdot \omega = -u \cdot \omega \) and \( |u'| = |u| \) imply the invariance 
		$$ B_2(u',\omega) = B_2(u,\omega), \quad \forall u \neq 0,\ \omega \in \mathbb{S}^2.$$  
		Under the cut-off assumption, \( b_2(\hat{u} \cdot \omega) \in L^1(\mathbb{S}^2) \) holds for any fixed unit vector \( \hat{u} \), and we define the finite normalization constant:  
		\begin{equation}\label{cutoff b2}
			0 < b := \int_{\mathbb{S}^2} b_2(\hat{u} \cdot \omega) \, d\omega < \infty.
		\end{equation} 
		The parameter range \( \gamma_{2} \in (0, 1] \) corresponds to hard potentials, while \( \gamma_{2} \in (-3, 0) \) characterizes soft potentials, with \( \gamma_{2} = 0 \) representing Maxwellian molecules. Notably, the classical hard sphere model arises as a special case of \eqref{B_2} when \( \gamma_{2} = 1 \) and the angular cross-section takes the form \( b_2(z) = \frac{|z|}{2} \) (refer to Remark 2.2. in \cite{MR4412071}).

		The ternary collisional operator \( Q_T(f, g, h) \), first introduced in \cite{MR4337060,MR4315662} and considered here in its symmetrized form as presented in Section 5.4 of \cite{MR4337060}, is expressed as:
		\begin{align}\label{Q_T(f, g, h)} 
			\begin{aligned}
				Q_T(f, g, h) =  &\int_{\mathbb{S}^{5} \times \mathbb{R}^{6}}B_3(\bm{u}, \bm{\omega}) \left( f^* g_1^* h_2^* - f g_1 h_2 \right) d\bm{\omega} dv_{1}dv_{2} \\[0.5mm]
				 &+ 2 \int_{\mathbb{S}^{5} \times \mathbb{R}^{6}} B_3(\bm{u}_1, \bm{\omega}) \left( f^{1*} g_1^{1*} h_2^{1*} - f g_1 h_2 \right) d\bm{\omega} dv_{1}dv_{2},
			\end{aligned} 
		\end{align}   
		where \( \bm{\omega} = \begin{pmatrix} \omega_1 \\ \omega_2 \end{pmatrix} \in \mathbb{S}^{5} \) denotes the vector of impact directions, and \( \bm{u} = \begin{pmatrix} v_1 - v \\ v_2 - v \end{pmatrix} \), \( \bm{u}_1 = \begin{pmatrix} v - v_1 \\ v_2 - v_1 \end{pmatrix} \) represent the relative velocity vectors for central and adjacent ternary interactions, respectively.  
		When the tracked particle is central, the collisional formulas are
		\begin{equation}\label{ternary collision law 1}
			\begin{cases} 
				v^* = v + \dfrac{\bm{u} \cdot \bm{\omega}}{1 + \omega_1 \cdot \omega_2} (\omega_1 + \omega_2), \\[3mm] 
				v_1^* = v_1 - \dfrac{\bm{u} \cdot \bm{\omega}}{1 + \omega_1 \cdot \omega_2} \omega_1, \\[3mm] 
				v_2^* = v_2 - \dfrac{\bm{u} \cdot \bm{\omega}}{1 + \omega_1 \cdot \omega_2} \omega_2.
			\end{cases} 
		\end{equation}  
		When the tracked particle is adjacent, the  corresponding formulas are:  
		\begin{equation}\label{ternary collision law 2}
			\begin{cases} 
				v^{1*} = v - \dfrac{\bm{u}_1 \cdot \bm{\omega}}{1 + \omega_1 \cdot \omega_2} \omega_1, \\[3mm] 
				v_1^{1*} = v_1 + \dfrac{\bm{u}_1 \cdot \bm{\omega}}{1 + \omega_1 \cdot \omega_2} (\omega_1 + \omega_2), \\[3mm] 
				v_2^{1*} = v_2 - \dfrac{\bm{u}_1 \cdot \bm{\omega}}{1 + \omega_1 \cdot \omega_2} \omega_2. 
			\end{cases} 
		\end{equation}  
		Both configurations satisfy momentum-energy conservation:  
		\begin{align}\label{t conservation} 
			\begin{aligned}
				v^* + v_1^* + v_2^* = v^{1*} + &v_1^{1*} + v_2^{1*} = v + v_1 + v_2,\\[0.5mm]
				|v^*|^2 + |v_1^*|^2 + |v_2^*|^2 = |v^{1*}|^2 + &|v_1^{1*}|^2 + |v_2^{1*}|^2 = |v|^2 + |v_1|^2 + |v_2|^2, 
			\end{aligned} 
		\end{align}   
		and the ternary micro-reversibility conditions:  
		\[
		\bm{u}^* \cdot \bm{\omega} = -\bm{u} \cdot \bm{\omega}, \quad \bm{u}_1^{1*} \cdot \bm{\omega} = -\bm{u}_1 \cdot \bm{\omega}.
		\]  
		For \( \bm{\omega} \in \mathbb{S}^{5} \), the transformations \( T_{1,\bm{\omega}}: (v, v_1, v_2) \mapsto (v^*, v_1^*, v_2^*) \) and \( T_{2,\bm{\omega}}: (v, v_1, v_2) \mapsto (v^{1*}, v_1^{1*}, v_2^{1*}) \) are linear measure-preserving involutions on \( \mathbb{R}^{9} \) (see Proposition 2.3 of \cite{MR4315662}). 
		 Specifically, they satisfy:  
		 \begin{enumerate}
		 	\item[$\bullet$] Involution property: \( T_{1,\bm{\omega}}^{-1} = T_{1,\bm{\omega}} \) and \( T_{2,\bm{\omega}}^{-1} = T_{2,\bm{\omega}} \); 
		 	\item[$\bullet$] Volume preservation: \( |\det T_{1,\bm{\omega}}| = 1 \) and \( |\det T_{2,\bm{\omega}}| = 1 \).  
		 \end{enumerate}
		The symmetric velocity norm \( |\tilde{\bm{u}}| \), defined as  
		\[
		|\tilde{\bm{u}}| := \left( |v - v_1|^2 + |v - v_2|^2 + |v_1 - v_2|^2 \right)^{1/2},  
		\]  
		quantifies the collective relative motion of three interacting particles and satisfies the inequality 
		\begin{equation}\label{tildeu-u}
			\frac{1}{\sqrt{3}} |\tilde{\bm{u}}| \leq |\bm{u}|, |\bm{u}_1| \leq |\tilde{\bm{u}}|.
		\end{equation} 
		Conservation of momentum and energy ensures the invariance of this norm under ternary collisions, i.e., 
		$$|\tilde{\bm{u}}^*| = |\tilde{\bm{u}}^{1*}| = |\tilde{\bm{u}}| ,$$
		 where \( |\tilde{\bm{u}}^*| \) and \( |\tilde{\bm{u}}^{1*}| \) represent post-collisional configurations. 
		 Normalizing the relative velocities via  
		\[
		\bar{\bm{u}} := |\tilde{\bm{u}}|^{-1} \bm{u}, \quad \bar{\bm{u}}_1 := |\tilde{\bm{u}}|^{-1} \bm{u}_1,  
		\]  
		confines \( \bar{\bm{u}} \) and \( \bar{\bm{u}}_1 \) to the 5-dimensional ellipsoid  
		\[
		\mathbb{E}_1 = \left\{ (\nu_1, \nu_2) : |\nu_1|^2 + |\nu_2|^2 + |\nu_1 - \nu_2|^2 = 1 \right\}.
		\]  
		The ternary interaction differential cross-section \( B_3(\bm{u}, \bm{\omega}) \), governing the transition probability of ternary collisions, is defined as  
		\begin{align}\label{B_3}
			\begin{aligned}
				&B_3(\bm{u}, \bm{\omega}) = |\tilde{\bm{u}}|^{\gamma_3} \, b_3\left( \bar{\bm{u}} \cdot \bm{\omega}, \omega_1 \cdot \omega_2 \right),\\[0.5mm]
				&B_3(\bm{u}_{1}, \bm{\omega}) = |\tilde{\bm{u}}|^{\gamma_3} \, b_3\left( \bar{\bm{u}}_{1} \cdot \bm{\omega}, \omega_1 \cdot \omega_2 \right), 
			\end{aligned} 
		\quad\gamma_3 \in (-6, 1]. 
		\end{align}    
		The angular component \( b_3: [-1, 1] \times [-1/2, 1/2] \to [0, \infty) \) is a non-negative function satisfying symmetry (\( b_3(-z, w) = b_3(z, w) \)), micro-reversibility (\( b_3(\bar{\bm{u}}^* \cdot \bm{\omega}, \omega_1 \cdot \omega_2) = b_3(\bar{\bm{u}} \cdot \bm{\omega}, \omega_1 \cdot \omega_2) \)), and integrability (cut-off assumption)  
		\begin{equation}\label{cutoff b3}
			\|b_3\|:= \sup_{\bm{\nu} \in \mathbb{E}_1} \int_{\mathbb{S}^{5}} b_3(\bm{\nu} \cdot \bm{\omega}, \omega_1 \cdot \omega_2) \, d\bm{\omega} < \infty.
		\end{equation}   
		Conservation of the symmetric velocity norm \( |\tilde{\bm{u}}| \) under ternary collisions, combined with the micro-reversibility conditions, ensures the invariance of the collision kernel for both central and adjacent interactions:  
		\begin{equation}\label{invariance of the collision kernel}
			B_3(\bm{u}^*, \bm{\omega}) = B_3(\bm{u}, \bm{\omega}), \quad B_3(\bm{u}_1^{1*}, \bm{\omega}) = B_3(\bm{u}_1, \bm{\omega}), 
		\end{equation} 
		where \( \bm{u}^* \) and \( \bm{u}_1^{1*} \) denote post-collisional relative velocities. 
		Moreover, the ternary collision operator introduced in \cite{MR4315662} corresponds to the hard sphere model under the generalized framework \eqref{B_3} with the specific parameter values (see Remark 2.3 in \cite{MR4412071}):  
		\[
		\gamma_3 = 1, \quad b_3(z, w) = \frac{1}{2} \frac{|z|}{\sqrt{1 + w}}.
		\]

		Based on the properties of the symmetrized ternary Boltzmann operator \(Q_T(f,f,f)\) presented in Appendix \ref{A.1.}, together with the known properties of the binary collision operator \(Q_B(f,f)\) established in \cite{MR1307620, Glassey1996}, we can deduce the following key properties of equation \eqref{BT-f}. The integrated conservation and entropy identities below are understood for sufficiently regular distributions for which the displayed integrals are finite and the spatial boundary terms vanish. In the whole-space perturbative setting around a global Maxwellian, we use the local conservation laws instead.
		\begin{enumerate}
			\item[$\bullet$] Conservation Laws.
			
			The conservation laws of total mass, momentum, and energy may be deduced from the identity  
			\[
			\int_{\mathbb{R}^{3}} \left[ Q_{B}(f,f) + Q_{T}(f,f,f) \right] \begin{pmatrix} 1 \\ v \\ |v|^{2} \end{pmatrix}  dv = 0.
			\]  
			By integrating equation \eqref{BT-f} over velocity and physical space, it follows that  
			\[
			\frac{d}{dt} \int_{\mathbb{R}^{3}} \int_{\mathbb{R}^{3}} f(t,x,v) \begin{pmatrix} 1 \\ v \\ |v|^{2} \end{pmatrix}  dv  dx = 0.
			\]
			
			\item[$\bullet$] H-theorem.
			
			The H-theorem for the binary-ternary Boltzmann equation follows from the entropy production functional  
			\[
			D(f) := \int_{\mathbb{R}^3} \left[ Q_{B}(f,f) + Q_{T}(f,f,f) \right] \ln f  \, dv \leq 0,
			\]  
			which implies that  
			\[
			\frac{d}{dt} \int_{\mathbb{R}^{3}} \int_{\mathbb{R}^{3}} f \ln f  \, dv \, dx = \int_{\mathbb{R}^3}D(f)\, dx \le 0.
			\]
			
			\item[$\bullet$] Equilibrium State.
			For positive distributions with sufficient velocity decay,
			
			\begin{align*}
				&\quad Q_{B}(f,f) + Q_{T}(f,f,f) = 0, \\ \iff& \quad   D(f) := \int_{\mathbb{R}^3} \left[ Q_{B}(f,f) + Q_{T}(f,f,f) \right] \ln f  \, dv = 0, \\
				\iff& \quad  f \text{ is a Maxwellian of the form:} \\
				& \qquad f(t, x, v) = \frac{R(t, x)}{(2\pi T(t, x))^{3/2}} e^{-\frac{|v - U(t, x)|^2}{2T(t, x)}}.
			\end{align*} 
		\end{enumerate}
	
		 Indeed, for the binary-ternary Boltzmann equation \eqref{BT-f}, we are concerned with its global well-posedness. The space inhomogeneous binary-ternary Boltzmann equation was studied by Ampatzoglou–Gamba–Pavlović–Tasković in \cite{MR4412071}, where they established global well-posedness near vacuum in Maxwellian-weighted \( L^\infty \)-spaces. A notable refinement was provided by Wei–Yu in \cite{JinboB}, who extended this result to an integrable space under the near-vacuum assumption. For the space homogeneous case, Ampatzoglou–Gamba–Pavlović–Tasković demonstrated in \cite{ampatzoglou2022momentestimateswellposednessbinaryternary} the generation and propagation of polynomial and exponential moments, along with global well-posedness. 
		While these results address well-posedness in specific regimes, the problem near global equilibrium remains open.
		In this paper, we investigate the global well-posedness of the binary-ternary Boltzmann equation \eqref{BT-f} in precisely this setting. Specifically, we define the normalized global equilibrium \( \mu(v) \) as
		\[
		\mu(v) := \frac{1}{(2\pi)^{\frac{3}{2}}} e^{-\frac{|v|^2}{2}}.
		\]
		The objective of this study is to establish that a particle distribution function \( f \) governed by the binary-ternary Boltzmann equation \eqref{BT-f}, evolving from an initial perturbation \( f_0 \) sufficiently close to \( \mu(v) \), admits a global solution. 
		To formulate this, we define the perturbation \( g(t, x, v) \) relative to \( \mu(v) \) via the decomposition:  
		\[
		f(t, x, v) = \mu(v) + \sqrt{\mu(v)} \, g(t, x, v).  
		\]  
		Substituting this ansatz into \eqref{BT-f} and dividing through by \( \sqrt{\mu(v)} \), we derive the Cauchy problem for the perturbation \( g(t, x, v) \):  
		\begin{equation}\label{BT-g}
			\begin{cases}  
				\partial_t g + v \cdot \nabla_x g + L g = \Gamma_{B}(g, g) + \Gamma_{T}(g, g) + T(g, g, g), \\  
				g(0, x, v) = g_0(x, v).
			\end{cases} 
		\end{equation} 
		The linearized collision operator \( L \) is decomposed as 
		\[L = L_{B} + L_{T},\] 
		where \( L_{B} \) and \( L_{T} \) are the linearized operators derived from the binary collision operator \( Q_{B} \) and ternary collision operator \( Q_{T} \), respectively. 
		Following classical results for the binary Boltzmann equation (see \cite{Glassey1996}), \( L_{B} \) admits the structure:  
		\[
		L_{B} = \nu_{B} - K_{B},
		\]  
		where $ \nu_{B}(v) $  represents the binary collision frequency. 
		The ternary counterpart \( L_{T} \), derived analogously from \( Q_{T} \), will inherit similar structural properties but with contributions from three-body interactions, as detailed in Section \ref{sec;Preliminaries}.
		
		The approach employed in this work to construct solutions to the binary-ternary Boltzmann equation \eqref{BT-f} is based on the energy method within an \(L^2\) framework—a robust technique for establishing global well-posedness near equilibrium for the Boltzmann equation and its variants.  
		This methodology was pioneered by Liu–Yang–Yu \cite{MR2043729} and Guo \cite{MR1908664,guo2003vlasov,MR2095473,MR2013332} for the classical Boltzmann equation and its variants near equilibrium.  
		Moreover, this approach has been successfully applied to a wide range of related models—including the non-cutoff Boltzmann equation as studied by Gressman–Strain \cite{MR2784329} and Alexandre–Morimoto–Ukai–Xu–Yang \cite{MR2793203,MR2795331,MR2863853}; the Boltzmann equation with external forces, such as the Vlasov–Poisson–Boltzmann system (Yang–Yu–Zhao \cite{MR2276498}, Duan–Yang–Zhao \cite{MR2911838,MR3037299}, Duan–Liu \cite{MR3116314}, and Xiao–Xiong–Zhao \cite{MR3567504}) and the Vlasov–Maxwell–Boltzmann system (Duan–Lei–Yang–Zhao \cite{duan2017vlasov}); the quantum Boltzmann equation by Ouyang–Wu \cite{MR4377165} and Zhou \cite{MR4623348}; the relativistic Boltzmann equation by Jang–Strain \cite{MR4470411}; and the relativistic quantum Boltzmann equation by Bae–Jang–Yun \cite{MR4264953}.
		These diverse applications further attest to the versatility and robustness of the method.
		
		To present the results in this paper, the following notations are introduced.
		For multi-indices $ \alpha=(\alpha_{1},\alpha_{2},\alpha_{3})\in\mathbb{N}^{3} ,$ we define 
		$$ \partial^{\alpha} =\partial^{\alpha_{1}}_{x_{1}}\partial^{\alpha_{2}}_{x_{2}}\partial^{\alpha_{3}}_{x_{3}}. $$
		The length of $ \alpha $ is $|\alpha|=\alpha_{1}+\alpha_{2}+\alpha_{3}.$
		Multi-indices are added according to the rule that if $ \alpha'=(\alpha_{1}',\alpha_{2}',\alpha_{3}') $ and $ \alpha''=(\alpha_{1}'',\alpha_{2}'',\alpha_{3}'') ,$ then $ \alpha'+\alpha''=(\alpha_{1}'+\alpha_{1}'',\alpha_{2}'+\alpha_{2}'',\alpha_{3}'+\alpha_{3}'') .$
		In addition, \( \langle\cdot, \cdot\rangle \) denotes the standard \( L^2 \) inner product in \( \mathbb{R}^3_v \), and \( (\cdot, \cdot) \) represents the inner product in \( \mathbb{R}^3_x \times \mathbb{R}^3_v \).  
		For the sake of convenience in the notations frequently used throughout this paper,  we denote the dissipation norm as
		\[
		 \|\cdot\|_{L^{2}_{\mathcal{D}}} := \|\sqrt{\nu_{B}(v)}(\cdot)\|_{L^{2}_{v}}.
		\]
		For any \( q \in \mathbb{R} \), we define the weighted \( L^2 \)-norm \( \|\cdot\|_{L^2_q} \) by  
		\[
		\|\cdot\|_{L^2_q} := \left\| \langle v \rangle^{\frac{q}{2}} (\cdot) \right\|_{L^2_v},  
		\]  
		where \( \langle \cdot \rangle := \sqrt{1 + |\cdot|^2} \) denotes the Japanese bracket.
		We also define 
		$$ \|\partial^{\alpha}[u_{1},\dots,u_{n}]\|^{2}_{L^{2}_{x}}:=\sum_{k=1}^{n}\|\partial^{\alpha} u_{k} \|^{2}_{L^{2}_{x}}.$$
		Moreover, the notation \( A \lesssim B \) indicates that there exists a universal constant \( C > 0 \) such that \( A \leq C B \). 
		The notation \( A \sim B \) means that there exist two positive constants \( C_1 \) and \( C_2 \) such that \( C_1 B \leq A \leq C_2 B \).

     The core idea of this energy method is as follows. The approach begins with the selection of an energy functional, which is typically defined within (weighted) high-order Sobolev spaces grounded in an \(L^2\) framework. In this paper, we specifically choose the functional \(\|\cdot\|^{2}_{H^N_x L^2_v}\).  
     Furthermore, a key step involves identifying a dissipation rate that bounds the linear term from below and controls the nonlinear terms from above. An important observation is that the linearized collision operator \(L\) has a five-dimensional kernel spanned by the collision invariants (see Proposition \ref{prop:L_properties}):
     \[
     \operatorname{Ker}(L) = \operatorname{span} \left\{ \sqrt{\mu},  v_1\sqrt{\mu},  v_2\sqrt{\mu},  v_3\sqrt{\mu},  |v|^2\sqrt{\mu} \right\}.
     \]
     This structure motivates the macro-micro decomposition:
     \[
     g =  \mathbf{P}g + (\mathbf{I} - \mathbf{P})g,
     \]
     where \(\mathbf{P}\) is the orthogonal projection onto \(\operatorname{Ker}(L)\). Due to the coercivity of \(L\), a lower bound is only available for the microscopic component \((\mathbf{I} - \mathbf{P})g\):
     \[
     \langle L g, g \rangle \gtrsim \|(\mathbf{I} - \mathbf{P})g\|_{L^2_{\mathcal{D}}}^2,
     \]
     implying that the dissipation directly derived from \(L\) is only microscopic. 
     To obtain a complete dissipation mechanism, the macroscopic equations—which behave like an elliptic system—are used to estimate the macroscopic components (see Subsection 4.2). In this paper, we introduce a dissipation rate of the form:
     \[\mathcal{D}_{N}(g(t))\sim \|\nabla_x \mathbf{P}g(t)\|^{2}_{H^{N-1}_{x}L^{2}_{\mathcal{D}}}+ \|(\mathbf{I} - \mathbf{P})g(t)\|^{2}_{H^{N}_{x}L^{2}_{\mathcal{D}}},\]
    with the precise definition given in \eqref{dissipation rate}.
		
		Building upon this framework, we now present the main result of this paper: the global existence and uniqueness of solutions to the Cauchy problem \eqref{BT-g}.  
\begin{theorem}\label{main_theorem}  
			Suppose the initial data for \eqref{BT-g} satisfies  
			$f_0(x,v) = \mu(v) + \sqrt{\mu(v)} \, g_0(x,v)\ge0. $ 
		Let \( N\ge2 \), \( \gamma_2 \in (-\frac{3}{2}, 1] \), \( \gamma_3 \in (-3, 1] \), and assume that \( \gamma_3 \leq \gamma_2 \). Then, there exists a constant \( \epsilon > 0 \) such that if the initial perturbation satisfies  
		\[
		\|g_0\|^{2}_{H^N_x L^2_v} \leq \epsilon,  
		\]  
		the binary-ternary Boltzmann equation \eqref{BT-g} admits a unique global-in-time solution \( g(t,x,v) \) in \((0,+\infty)\times\mathbb{R}^{6}_{x,v}\) satisfying \( f(t,x,v) = \mu(v) + \sqrt{\mu(v)} \, g(t,x,v) \geq 0 \) and  
		\[
		\sup_{t\in\mathbb{R}^{+}}\|g(t)\|^{2}_{H^N_x L^2_v}  + \int_{0}^{\infty}\mathcal{D}_{N}(g)(t)\,dt \lesssim \|g_0\|^{2}_{H^N_x L^2_v}.  
		\]
		\end{theorem}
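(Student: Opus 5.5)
The proof will follow Guo's energy method in the $H^N_xL^2_v$ framework, combining a local existence result with a uniform a priori estimate through a continuity argument. We first record the structure of the linearized operator $L=L_B+L_T$. Here $L_T=\nu_T-K_T$, where the ternary collision frequency satisfies $\nu_T(v)\sim\langle v\rangle^{\gamma_3}$ for the relevant range of $\gamma_3$. The assumption $\gamma_3\le\gamma_2$ gives $\nu_T\lesssim\nu_B\sim\langle v\rangle^{\gamma_2}$, so the total frequency $\nu=\nu_B+\nu_T$ is equivalent to $\nu_B$. Next we show that $K_T$ is compact on $L^2_v$, using the Carleman/Grad-type kernel representation adapted to the six-dimensional ternary geometry. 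The range $\gamma_3>-3$ should be exactly what makes the kernel square integrable. Together with the known binary theory, this yields
\[
\operatorname{Ker}L=\operatorname{span}\{\sqrt\mu,v\sqrt\mu,|v|^2\sqrt\mu\},\qquad \langle Lg,g\rangle\gtrsim\|(\mathbf I-\mathbf P)g\|^2_{L^2_{\mathcal D}}.
\]
The lower bound holds because $L_T\ge0$ (by the H-theorem symmetrization) and $L_B$ is already coercive with dissipation norm $\sqrt{\nu_B}$. We take these facts from Proposition \ref{prop:L_properties} and the preliminaries.

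The second step is the set of nonlinear estimates. For the binary and ternary quadratic terms $\Gamma_B,\Gamma_T$ and the cubic term $T$, we prove bounds of the type
\[
|(\partial^\alpha\Gamma(g,g),\partial^\alpha h)|\lesssim \sum_{\alpha_1\le\alpha}\|\partial^{\alpha_1}g\|_{L^2_v}\|\partial^{\alpha-\alpha_1}g\|_{L^2_{\mathcal D}}\|\partial^\alpha h\|_{L^2_{\mathcal D}},
\]
integrated in $x$. The $x$-integrals are closed with Sobolev embedding $H^2_x\subset L^\infty_x$, applied to whichever factor carries fewer derivatives; this is where $N\ge2$ is used. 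For the cubic term the bound carries an additional factor $\|g\|_{H^N_xL^2_v}$. The loss and gain parts are treated separately. The loss part is controlled by $\int|\tilde{\bm u}|^{\gamma_3}\mu^{1/2}(v_1)\mu^{1/2}(v_2)\,dv_1dv_2\lesssim\langle v\rangle^{\gamma_3}$, and is then split as $\langle v\rangle^{\gamma_3/2}\langle v\rangle^{\gamma_3/2}\le\nu_B^{1/2}\nu_B^{1/2}$ using $\gamma_3\le\gamma_2$. The gain part is handled by Cauchy–Schwarz together with the involution $T_{i,\bm\omega}$ and its unit Jacobian. Soft potentials require that singular integrals such as $\int|v-v_1|^{\gamma_2}\mu^{1/2}(v_1)\,dv_1$ be square-integrable in the Cauchy–Schwarz pairing, and this is where $\gamma_2>-3/2$ and $\gamma_3>-3$ enter. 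We expect this step to be the main obstacle: the ternary gain term with soft $\gamma_3$ and the exchange of variables under $T_{2,\bm\omega}$ for the adjacent configuration. We would first try bounding $|\tilde{\bm u}|^{\gamma_3}$ by a product of binary relative-velocity powers so as to reduce to binary-type integrals.

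The third step is macroscopic dissipation. Write $\mathbf Pg=(a+b\cdot v+c|v|^2)\sqrt\mu$ and take velocity moments of \eqref{BT-g} against the standard 13 moments to obtain the fluid-type system for $(a,b,c)$. We then build an interactive functional $\mathcal E_{int}(t)$, a sum of terms like $(\partial^\alpha r,\partial^\alpha\nabla_x c)$, satisfying
\[
\frac{d}{dt}\mathcal E_{int}+\|\nabla_x(a,b,c)\|^2_{H^{N-1}_x}\lesssim\|(\mathbf I-\mathbf P)g\|^2_{H^N_xL^2_{\mathcal D}}+\text{(nonlinear)}.
\]
This follows the Guo / Duan procedure without change, since the macroscopic system depends only on the conservation laws, which $Q_T$ preserves.

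Finally we combine the pieces. Take $\partial^\alpha$ of \eqref{BT-g} for $|\alpha|\le N$, pair with $\partial^\alpha g$, and use coercivity together with $\kappa\,\mathcal E_{int}$ for small $\kappa$. Defining $\mathcal E_N\sim\|g\|^2_{H^N_xL^2_v}$, this gives
\[
\frac{d}{dt}\mathcal E_N+\lambda\mathcal D_N\lesssim(\mathcal E_N^{1/2}+\mathcal E_N)\mathcal D_N.
\]
Local existence and uniqueness come from a standard iteration using the same nonlinear bounds, with nonnegativity of $f$ propagated by the usual monotone scheme on the gain/loss splitting. A continuity argument with $\mathcal E_N(0)\le\epsilon$ small then yields the global bound stated in Theorem \ref{main_theorem}.
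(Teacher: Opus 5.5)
Your overall architecture is the paper's: coercivity from $L_B$ plus $L_T\ge0$, the 13-moment interaction functional, local existence, and a continuity argument. However, two load-bearing steps are not justified as you describe them.

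\textbf{Closing the global estimate against $\mathcal{D}_N$.} The dissipation $\mathcal{D}_N$ in the theorem, like the output of your macroscopic step, contains $\|\nabla_x(a,\mathbf{b},c)\|^2_{H^{N-1}_x}$ but not $\|(a,\mathbf{b},c)\|^2_{L^2_x}$.

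At $|\alpha|=0$ your trilinear bound gives $\int_{\mathbb{R}^3}\|g\|_{L^2_v}\|g\|_{L^2_{\mathcal{D}}}\|g\|_{L^2_{\mathcal{D}}}\,dx$. However you distribute $H^2_x\subset L^\infty_x$ among the factors, a full $\|g\|_{L^2_xL^2_{\mathcal{D}}}$ or $\|g\|_{H^2_xL^2_{\mathcal{D}}}$ remains, and this contains $\|(a,\mathbf{b},c)\|_{L^2_x}$. Likewise, for $|\alpha|\ge1$ with all derivatives on the $L^2_v$-factor, the inhomogeneous embedding produces $\|g\|_{H^2_xL^2_{\mathcal{D}}}$. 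So $\frac{d}{dt}\mathcal{E}_N+\lambda\mathcal{D}_N\lesssim(\mathcal{E}_N^{1/2}+\mathcal{E}_N)\mathcal{D}_N$ does not follow from what you wrote.

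Two ingredients are missing. First, at $\alpha=0$ the test function must be replaced by $(\mathbf{I}-\mathbf{P})g$. This is legitimate because $\langle\Gamma_T(g,g)+T(g,g,g),\mathbf{P}g\rangle=\langle L_Tg,\mathbf{P}g\rangle+\langle\mu^{-1/2}Q_T(f,f,f),\mathbf{P}g\rangle=0$, by symmetry of $L_T$ and the ternary conservation laws, and similarly for $\Gamma_B$. Second, every factor measured in $L^2_{\mathcal{D}}$ that carries no $x$-derivative must be placed in $L^6_x$ or $L^\infty_x$ via the homogeneous inequalities $\|u\|_{L^6}\lesssim\|\nabla u\|_{L^2}$ and $\|u\|_{L^\infty}\lesssim\|\nabla u\|_{H^1}$. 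At zeroth order this gives, e.g.\ $\|g\|_{L^3_xL^2_v}\|g\|_{L^6_xL^2_{\mathcal{D}}}\|(\mathbf{I}-\mathbf{P})g\|_{L^2_xL^2_{\mathcal{D}}}$, with an extra $\|g\|_{L^\infty_xL^2_v}$ for the cubic term. The same care is needed for the sources $\langle G,e_k\rangle$ in the macroscopic estimate.

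\textbf{Nonnegativity when $\gamma_3\in(0,1]$.} You assert compactness of $K_T$ on $L^2_v$ through a ternary Carleman representation, but give no argument. The paper does not have this; it proves only $|\langle K_Tg,h\rangle|\lesssim\|g\|_{L^2_{\gamma_3}}\|h\|_{L^2_{\gamma_3}}$, which for $\gamma_3>0$ is not an $L^2_v$ bound. You do not need compactness for coercivity, but your ``usual monotone scheme'' does need $L^2_v$-boundedness. Written for $\bar g^{n+1}$, it has $(K_B+K_T)\bar g^n$ on the right. With only the weighted bound this produces $\int_0^T\|\bar g^n\|_{H^N_xL^2_{\mathcal{D}}}\|\bar g^{n+1}\|_{H^N_xL^2_{\mathcal{D}}}\,dt$ with an $O(1)$ constant that shrinking $T$ does not make small, so the uniform bound on the iterates does not close. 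The same objection applies to an existence iteration that puts $K_Tg^n$ on the right; the paper keeps $L_T$ on the left.

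The paper's route is to truncate $B_3$ to $B_m=\min\{B_3,m\}$, so that $K_{T,m}$ is bounded on $L^2_v$ with norm $\lesssim m$. It then:
\begin{enumerate}
\item runs the positivity-preserving scheme on an $m$-dependent interval;
\item proves an $m$-uniform a priori bound on a fixed $[0,T^*]$ by keeping $L_{T,m}\ge0$ on the left;
\item continues the solution to $[0,T^*]$ and lets $m\to\infty$;
\item transfers nonnegativity to the Step~1 solution by uniqueness.
\end{enumerate}

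\textbf{The soft ternary gain term.} For the term you flag as the main obstacle, Cauchy--Schwarz with the involution does not by itself move the weight $\langle v\rangle^{\gamma_3}$, $\gamma_3<0$, onto a post-collisional velocity. The paper splits into two regions. On $|(v_1,v_2)|\ge|(v,v)|/2$, the factor $\sqrt{\mu_1\mu_2}$ gives Gaussian decay in all six velocities. On the complement, energy conservation gives $\langle v\rangle^{\gamma_3}\lesssim\min\{\langle v^*\rangle^{\gamma_3},\langle v^*_1\rangle^{\gamma_3},\langle v^*_2\rangle^{\gamma_3}\}$.
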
  
	  
	  \begin{remark}
	  	The parameters \(\gamma_2\) and \(\gamma_3\) in our main result cover both hard potentials and a portion of soft potentials. In particular, the hard sphere model is included by taking $\gamma_2=\gamma_3=1$ and choosing the collision kernels as follows:  
	  	\[
	  	B_2(u, \omega) = \frac{1}{2}|u \cdot \omega|, 
	  	\]  \[
	  	B_3(\bm{u}, \bm{\omega}) = \frac{1}{2} \frac{|\bm{u} \cdot \bm{\omega}|}{\sqrt{1 + \omega_1 \cdot \omega_2}}, \quad
	  	B_3(\bm{u}_{1}, \bm{\omega}) = \frac{1}{2} \frac{|\bm{u}_{1} \cdot \bm{\omega}|}{\sqrt{1 + \omega_1 \cdot \omega_2}}.
	  	\]  
	  	Moreover, the requirement that \(\gamma_3 \leq \gamma_2\) is due to a technical reason: in the definition of the dissipation rate, we only make use of the coercivity of the binary linearized collision operator \(L_B\).
	  \end{remark}
	
	This paper is structured as follows. 
	The reformulated equation \eqref{BT-g} is rigorously derived in Section \ref{sec;Preliminaries} by linearizing \eqref{BT-f} about the global equilibrium. Furthermore, key properties of the linearized collision operator \( L \) are established, and the relevant notations are explained in detail. 
	Section \ref{sect.Estimates of the Nonlinear Terms} presents several essential estimates for both the linear and nonlinear terms within the energy method framework. 
	Section \ref{sec;Global Existence} is devoted to proving the local well-posedness of the solution, which is then extended globally by combining energy estimates with macroscopic dissipation. 
	In Appendix \ref{sec;app}, we review several key properties of the ternary Boltzmann collision operator \(Q_T\), as established in Section 5.4 of \cite{MR4337060}. Additionally, we provide a summary of convolution estimates and Sobolev inequalities that are frequently used throughout this paper.

	    \section{Preliminaries}\label{sec;Preliminaries} 
	    
	    In this section, we rigorously derive the reformulated equation \eqref{BT-g} by  linearizing  the binary-ternary Boltzmann equation \eqref{BT-f} about the global equilibrium \( \mu(v) \)  via the perturbation  \( f = \mu + \sqrt{\mu} \, g \).
	    A detailed explanation of the notations is provided, and key properties of the linearized collision operator \( L \) are established.
		
		The  gain and loss terms of  binary collision operators are defined as follows:
		\begin{align}
			Q_{B,+}(g, h) &= \int_{\mathbb{S}^{2} \times \mathbb{R}^3} B_{2}(u, \omega) \, g' h_1' \, d\omega dv_1, \label{eq:Qz1} \\
			Q_{B,-}(g, h) &= \int_{\mathbb{S}^{2} \times \mathbb{R}^3} B_{2}(u, \omega) \, g h_1 \, d\omega dv_1 \nonumber \\
			&= g \int_{\mathbb{S}^{2} \times \mathbb{R}^3} B_{2}(u, \omega) \, h_1 \, d\omega dv_1 \nonumber \\
			&=: g R_{B}(h). \label{eq:Qz2}
		\end{align} 
		Furthermore, by leveraging the conservation of energy in binary collisions \eqref{b conservation}, we observe that
		\begin{equation}\label{b conservation mu}
			\mu\mu_{1}=\mu'\mu_1',
		\end{equation}
		so we define
		\begin{align} 
			\Gamma_{B,+}(g, h) :=\frac{1}{\sqrt{\mu}} Q_{B,+}(\sqrt{\mu}g, \sqrt{\mu}h)&= \int_{\mathbb{S}^{2} \times \mathbb{R}^3} \sqrt{\mu_{1}}B_{2}(u, \omega) \, g' h_1' \, d\omega dv_1, \label{eq:Gamma_plus} \\
			\Gamma_{B,-}(g, h) :=\frac{1}{\sqrt{\mu}} Q_{B,-}(\sqrt{\mu}g, \sqrt{\mu}h)&= \int_{\mathbb{S}^{2} \times \mathbb{R}^3} \sqrt{\mu_{1}}B_{2}(u, \omega) \, g h_1 \, d\omega dv_1, \label{eq:Gamma_minus} \\
			\Gamma_{B}(g, h) :=\frac{1}{\sqrt{\mu}} Q_{B}(\sqrt{\mu}g, \sqrt{\mu}h)&= \Gamma_{B,+}(g, h) - \Gamma_{B,-}(g, h). \label{eq:Gamma_total}
		\end{align}
		The binary linearized operator \( L_B \) is expressed as:
		\begin{align} 
			L_{B}g &= -\Gamma_{B}(\sqrt{\mu}, g) - \Gamma_{B}(g, \sqrt{\mu}) = \nu_{B}(v) g - K_{B}g, \label{eq:LB_decomp}
		\end{align}
		where the binary collision frequency \( \nu_B(v) \) is defined by:
		\begin{equation*} 
			\nu_{B}(v) = \int_{\mathbb{R}^3 \times \mathbb{S}^2} B_2(u, \omega) \mu_1 \, d\omega dv_1.
		\end{equation*} 
	It follows easily from \eqref{cutoff b2} and Lemma \ref{Convolution estimates} that for $\gamma_{2}\in(-3,1],$
	 \begin{equation}\label{nuB}
	 	\nu_{B}(v) =b\int_{\mathbb{R}^3} |v_{1}-v|^{\gamma_{2}} \mu_1 dv_1\sim \langle v\rangle^{\gamma_{2}}.
	 \end{equation} 
		The operator \( K_B \) is compact for hard-sphere interactions in \( L^2 \) space (see \cite{Glassey1996}, Section 3.5), and ``almost compact" for soft potentials in weighted spaces \( L^2_\mathcal{D} \) (see \cite{MR2013332}, Lemma 2).
		  
		Now we define the gain and loss terms of the ternary collision operator \( Q_T \):
		\begin{align}
			Q_{T,+}(g, h, \xi) &:= \int_{\mathbb{S}^{5} \times \mathbb{R}^{6}}B_3(\bm{u}, \bm{\omega})g^* h_1^* \xi_2^* + 2B_3(\bm{u}_1, \bm{\omega})  g^{1*} h_1^{1*} \xi_2^{1*} d\bm{\omega} dv_{1}dv_{2}, \label{eq:QT_plus} \\
			Q_{T,-}(g, h, \xi) &:= \int_{\mathbb{S}^{5} \times \mathbb{R}^{6}} \left( B_3(\bm{u}, \bm{\omega})+2B_3(\bm{u}_1, \bm{\omega}) \right)g h_1 \xi_2 d\bm{\omega} dv_{1}dv_{2} \nonumber \\
			&= g R_T(h, \xi), \label{eq:QT_minus}
		\end{align}
		where the  operator \( R_T \) is defined as:
		\[
		R_T(h, \xi) := \int_{\mathbb{S}^{5} \times \mathbb{R}^{6}} \left( B_3(\bm{u}, \bm{\omega})+2B_3(\bm{u}_1, \bm{\omega}) \right) h_1 \xi_2 d\bm{\omega} dv_{1}dv_{2} .
		\]
		Furthermore, due to the conservation of energy in ternary collisions as described in  \eqref{t conservation}, we can see that
		\begin{equation}\label{t conservation mu}
			\mu\mu_{1}\mu_{2}=\mu^{*}\mu^{*}_{1}\mu^{*}_{2}=\mu^{1*}\mu^{1*}_{1}\mu^{1*}_{2},
		\end{equation}
		which leads us to define the trilinear operators \( T_\pm \) and their combination: 
		\small
		\begin{align}
			T_+(g, h, \xi) &:= \frac{1}{\sqrt{\mu}} Q_{T,+}(\sqrt{\mu} g, \sqrt{\mu} h, \sqrt{\mu} \xi) \nonumber \\
			&= \int_{\mathbb{S}^{5} \times \mathbb{R}^{6}}\sqrt{\mu_{1}\mu_2}  \left(B_3(\bm{u}, \bm{\omega})g^* h_1^* \xi_2^* + 2B_3(\bm{u}_1, \bm{\omega})  g^{1*} h_1^{1*} \xi_2^{1*}\right) d\bm{\omega} dv_{1}dv_{2},  \label{eq:T_plus} \\
			T_-(g, h, \xi) &:= \frac{1}{\sqrt{\mu}} Q_{T,-}(\sqrt{\mu} g, \sqrt{\mu} h, \sqrt{\mu} \xi) \nonumber \\
			&= \int_{\mathbb{S}^{5} \times \mathbb{R}^{6}} \sqrt{\mu_{1}\mu_2} \left( B_3(\bm{u}, \bm{\omega})+2B_3(\bm{u}_1, \bm{\omega}) \right)g h_1 \xi_2 d\bm{\omega} dv_{1}dv_{2}, \label{eq:T_minus}\\
			T(g, h, \xi) &:= T_+(g, h, \xi)-T_-(g, h, \xi).\label{eq:T}
		\end{align}
	\normalsize
		The ternary linearized operator \(L_T \) and the ternary collision frequency \( \nu_T \) are: 
		\begin{equation}\label{L_T}
			L_T g = -T(g, \sqrt{\mu}, \sqrt{\mu}) - T(\sqrt{\mu}, g, \sqrt{\mu}) - T(\sqrt{\mu}, \sqrt{\mu},g) = \nu_T(v) g - K_T g,
		\end{equation} 
		where \( \nu_T(v) \) is defined as:
		\[
		\nu_T(v) =\int_{\mathbb{S}^{5} \times \mathbb{R}^{6}}\left( B_3(\bm{u}, \bm{\omega})+2B_3(\bm{u}_1, \bm{\omega}) \right) {\mu_{1}\mu_2}  d\bm{\omega} dv_{1}dv_{2}.
		\] 
		According to \eqref{tildeu-u}, \eqref{cutoff b3}, and Lemma \ref{Convolution estimates}, we observe that for $\gamma_{3}\in(-6,1],$
		\begin{equation}\label{nu3}
			0\le\nu_T(v)\lesssim\int_{\mathbb{R}^{6}}|{\bm{u}}|^{\gamma_3} {\mu_{1}\mu_2} dv_{1}dv_{2}\lesssim\langle v\rangle^{\gamma_{3}}.
		\end{equation}
		
		By substituting the perturbative decomposition \( f = \mu + \sqrt{\mu} \, g \) into the binary-ternary Boltzmann equation \eqref{BT-f}, we derive the linearized equation for the perturbation \( g(t, x, v) \) as described in \eqref{BT-g}:  
		\[
		\begin{cases}  
			\partial_t g + v \cdot \nabla_x g + L g = \Gamma_{{B}}(g, g) + \Gamma_{{T}}(g, g) + T(g, g, g), \\  
			g(0, x, v) = g_0(x, v),  
		\end{cases}   
		\]  
		where \( g_0(x, v) = ({f_0(x, v) - \mu(v)})/{\sqrt{\mu(v)}} \) defines the initial perturbation. 
		The linearized collision operator \( L \) is decomposed as  
		\[
		L = L_{{B}} + L_{{T}},  
		\]  
		with \( L_{{B}} \) and \( L_{{T}} \) corresponding to the linearization of the binary and ternary collision operators, respectively (defined explicitly in  \eqref{eq:LB_decomp} and \eqref{L_T}). The nonlinear terms \( \Gamma_{{B}}(g, h) \) and \( T(g, h, \xi) \) are given by \eqref{eq:Gamma_total} and \eqref{eq:T}, while the ternary interaction term \( \Gamma_{{T}}(g, h) \) is expressed as  
		\begin{equation}\label{Gamma_{{T}}(g,h)}
			\Gamma_{{T}}(g,h) := T(g, h, \sqrt{\mu}) + T(g, \sqrt{\mu}, h) + T(\sqrt{\mu}, g, h).  
		\end{equation} 
		Furthermore, the conservation laws stated in Proposition \ref{prop:conservation} (mass, momentum, and energy) can be reformulated in terms of the perturbation \( g \):  
		\begin{align}\label{conservation laws g}
			\begin{aligned}  
				&\partial_t \int_{\mathbb{R}^3} \sqrt{\mu} \, g \, dv + \nabla_x \cdot \int_{\mathbb{R}^3} v \sqrt{\mu} \, g \, dv = 0, \\  
				&\partial_t \int_{\mathbb{R}^3} v \sqrt{\mu} \, g \, dv + \nabla_x \cdot \int_{\mathbb{R}^3} v \otimes v \sqrt{\mu} \, g \, dv = 0, \\  
				&\partial_t \int_{\mathbb{R}^3} |v|^2 \sqrt{\mu} \, g \, dv + \nabla_x \cdot \int_{\mathbb{R}^3} |v|^2 v \sqrt{\mu} \, g \, dv = 0.  
			\end{aligned} 
		\end{align} 
		
		The following two propositions will establish key properties of the linearized collision operators \( L_T \) and \( L \), which play crucial roles in the construction of solutions to equation \eqref{BT-g}.
		
		\begin{proposition}[Properties of the Linearized Ternary Operator]\label{prop:LT_properties}
			The linearized ternary operator \( L_T \) satisfies the following:
			\begin{enumerate}
				\item[(\romannumeral 1)] \( L_T \) is a symmetric operator on \( L^2_v \):  
				\[
				\langle L_T g, h \rangle = \langle g, L_T h \rangle.
				\]
				\item[(\romannumeral 2)] \( L_T \) is a positive operator on \( L^2_v \):
				\[
				\langle L_T g, g \rangle \geq 0,
				\]
				with equality holding if and only if
				\[
				g = \sqrt{\mu}(a + \mathbf{b} \cdot v + c|v|^2)
				\]
				for some \( a, c \in \mathbb{R} \) and \( \mathbf{b} \in \mathbb{R}^3 \).
				\item[(\romannumeral 3)] \[
				\operatorname{Ker}(L_T) = \operatorname{span} \left\{ \sqrt{\mu}, \, v_1\sqrt{\mu}, \, v_2\sqrt{\mu}, \, v_3\sqrt{\mu}, \, |v|^2\sqrt{\mu} \right\}.
				\]
			\end{enumerate}    
		\end{proposition}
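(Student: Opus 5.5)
The plan is to derive a weak formulation of $\langle L_T g,h\rangle$ that is manifestly symmetric in $g,h$. From \eqref{L_T}, write
\[
L_T g=-\frac{1}{\sqrt{\mu}}\Big[Q_T(\sqrt{\mu}g,\mu,\mu)+Q_T(\mu,\sqrt{\mu}g,\mu)+Q_T(\mu,\mu,\sqrt{\mu}g)\Big],
\]
and set $\phi:=g/\sqrt{\mu}$, $\psi:=h/\sqrt{\mu}$. Using \eqref{t conservation mu}, the gain-minus-loss integrand of the linearized operator becomes $\mu\mu_1\mu_2\,(\phi^*+\phi_1^*+\phi_2^*-\phi-\phi_1-\phi_2)$ for central collisions, and analogously with $\phi^{1*}$ etc. for adjacent ones. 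Hence
\[
\langle L_T g,h\rangle=-\int_{\mathbb{S}^5\times\mathbb{R}^9}\Big[B_3(\bm u,\bm\omega)\,\Delta^*\phi+2B_3(\bm u_1,\bm\omega)\,\Delta^{1*}\phi\Big]\mu\mu_1\mu_2\,\psi\,d\bm\omega\,dv\,dv_1\,dv_2,
\]
where $\Delta^*\phi:=\phi^*+\phi_1^*+\phi_2^*-\phi-\phi_1-\phi_2$.

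Next I symmetrize, relying on the weak-form identities for $Q_T$ recalled in Appendix \ref{A.1.} (Section 5.4 of \cite{MR4337060}). These come from three facts:
\begin{enumerate}
\item[$\bullet$] permuting $(v,v_1,v_2)$, under which the symmetrized kernel $B_3(\bm u)+2B_3(\bm u_1)$ is designed to be effectively invariant, since the central and adjacent roles get exchanged;
\item[$\bullet$] the involutive, measure-preserving maps $T_{1,\bm\omega},T_{2,\bm\omega}$;
\item[$\bullet$] the kernel invariance \eqref{invariance of the collision kernel}.
\end{enumerate}
Together they give
\[
\langle L_T g,h\rangle=\frac{1}{6}\int_{\mathbb{S}^5\times\mathbb{R}^9}\big[B_3(\bm u,\bm\omega)+2B_3(\bm u_1,\bm\omega)\big]\mu\mu_1\mu_2\,\Delta\phi\,\Delta\psi\,d\bm\omega\,dv\,dv_1\,dv_2,
\]
with $\Delta$ taken relative to the corresponding collision map in each term. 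Rather than redo the permutation bookkeeping, I would invoke the appendix statement
\[
\int Q_T(F,G,H)\,\psi\,dv=\tfrac16\int\ldots(\psi^*+\psi_1^*+\psi_2^*-\psi-\psi_1-\psi_2),
\]
applied after symmetrizing $Q_T$ in its three arguments. The symmetric form immediately gives (i). Taking $h=g$ gives $\langle L_T g,g\rangle\ge0$, which is (ii) except for the equality case. I will also check that all integrals converge for $g\in L^2_v$, using \eqref{nu3} and the Gaussian weights.

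For the equality case and (iii), the steps are as follows.
\begin{enumerate}
\item[$\bullet$] If $\langle L_T g,g\rangle=0$, then, taking $b_3>0$ on a set of positive measure (non-degeneracy), $\phi^*+\phi_1^*+\phi_2^*=\phi+\phi_1+\phi_2$ for a.e. $(v,v_1,v_2,\bm\omega)$, i.e. $\phi$ is a ternary collision invariant.
\item[$\bullet$] I then show every measurable ternary collision invariant is $a+\mathbf b\cdot v+c|v|^2$. The trick is to reduce to the binary case. Choosing $\omega_2=0$ is not allowed, since $\bm\omega\in\mathbb{S}^5$ forces $|\omega_1|^2+|\omega_2|^2=1$. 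Instead I use the known characterization of ternary collision invariants from \cite{MR4337060}, or argue via the functional equation, first in the distributional setting with smoothing.
\item[$\bullet$] Conversely, conservation \eqref{t conservation} gives $L_T g=0$ for such $g$. Hence $\operatorname{Ker}(L_T)$ is the stated span: $L_Tg=0$ implies $\langle L_Tg,g\rangle=0$, and the reverse inclusion is direct.
\end{enumerate}

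The main obstacle is this characterization of ternary collision invariants under only measurability/$L^2$ assumptions. I would first try to quote it from Ampatzoglou–Pavlović. Failing that, I would mollify $\phi$ (the invariance is preserved under convolution in $v$-translations because the collision maps are linear and commute with a common shift of $(v,v_1,v_2)$), differentiate the functional equation in $\bm\omega$ to get $\nabla^2\phi$ proportional to the identity, and conclude the quadratic form.
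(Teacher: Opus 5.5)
Your plan follows the paper's route: with $\phi=g/\sqrt{\mu}$ you symmetrize $\langle L_T g,h\rangle$ into $\frac12\int B_3(\bm u,\bm\omega)\mu\mu_1\mu_2\,\Delta^*\phi\,\Delta^*\psi$ (equivalent to your $\frac16$ expression after swapping $v\leftrightarrow v_1$ in the adjacent part), and then conclude with Proposition \ref{prop:collision_invariant} and the conservation laws; the only difference is that you polarize the appendix weak form, whereas the paper performs the swaps $(v,v_1)\mapsto(v_1,v)$, $(\omega_1,\omega_2,v_1,v_2)\mapsto(\omega_2,\omega_1,v_2,v_1)$ and $T_{1,\bm\omega}$ explicitly. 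One correction: $\bm\omega=(\omega_1,0)$ with $|\omega_1|=1$ does lie on $\mathbb{S}^5$, and there \eqref{ternary collision law 1} reduces to the binary law \eqref{binary collision law} with $v_2^*=v_2$, so the reduction to the classical binary characterization that you dismissed is in fact the quickest way to handle measurable $\phi$ once your mollification has made it continuous; this needs $b_3>0$ a.e.\ rather than merely on a set of positive measure, so that the identity holds for a.e., hence by continuity all, $\bm\omega$. Also, for $\gamma_3>0$ your convergence check holds only for $g,h\in L^2_{\gamma_3}$, not on all of $L^2_v$.
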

		\begin{proof}
			From the definition of \( L_{T} \), we derive that
			\begin{align*}
				L_{T}g=  &\int_{\mathbb{S}^{5} \times \mathbb{R}^{6}}\sqrt{\mu}\mu_{1}\mu_{2}\bigg(B_3(\bm{u}, \bm{\omega}) \big[(\mu^{-\frac{1}{2}}g-(\mu^{-\frac{1}{2}}g)^{*})+((\mu^{-\frac{1}{2}}g)_{1}-(\mu^{-\frac{1}{2}}g)^{*}_{1}) \\
				&\quad+((\mu^{-\frac{1}{2}}g)_{2}-(\mu^{-\frac{1}{2}}g)^{*}_{2})\big]+ 2B_3(\bm{u}_1, \bm{\omega}) \big[(\mu^{-\frac{1}{2}}g-(\mu^{-\frac{1}{2}}g)^{1*})\\[0.5mm]
				&\quad +((\mu^{-\frac{1}{2}}g)_{1}-(\mu^{-\frac{1}{2}}g)^{1*}_{1})+((\mu^{-\frac{1}{2}}g)_{2}-(\mu^{-\frac{1}{2}}g)^{1*}_{2})\big] \bigg)   d\bm{\omega} dv_{1}dv_{2},
			\end{align*}  
			from which we conclude that 
			\small
			\begin{align*}
				\langle L_T g, h \rangle=  &\int_{\mathbb{S}^{5} \times \mathbb{R}^{9}} {\mu}\mu_{1}\mu_{2} B_3(\bm{u}, \bm{\omega}) \bigg[\mu^{-\frac{1}{2}}g+(\mu^{-\frac{1}{2}}g)_{1}+(\mu^{-\frac{1}{2}}g)_{2} \\
				&\quad\quad-(\mu^{-\frac{1}{2}}g)^{*} -(\mu^{-\frac{1}{2}}g)^{*}_{1}-(\mu^{-\frac{1}{2}}g)^{*}_{2}\bigg]\mu^{-\frac{1}{2}}h  d\bm{\omega} dv_{1}dv_{2}dv\\[0.5mm] 
			    &+2\int_{\mathbb{S}^{5} \times \mathbb{R}^{9}} {\mu}\mu_{1}\mu_{2} B_3(\bm{u}_1, \bm{\omega}) \bigg[\mu^{-\frac{1}{2}}g+(\mu^{-\frac{1}{2}}g)_{1}+(\mu^{-\frac{1}{2}}g)_{2} \\
			    &\quad\quad-(\mu^{-\frac{1}{2}}g)^{1*} -(\mu^{-\frac{1}{2}}g)^{1*}_{1}-(\mu^{-\frac{1}{2}}g)^{1*}_{2}\bigg]  \mu^{-\frac{1}{2}}h  d\bm{\omega} dv_{1}dv_{2}dv\\
				=:&I+II.
			\end{align*} 
			\normalsize
			By employing the variable transformation \( T_{1,\bm{\omega}}: (v, v_1, v_2) \mapsto (v^*, v_1^*, v_2^*) \) and noting that \( |\det T_{1,\bm{\omega}}| = 1 ,\) \eqref{t conservation mu} and invariance of the collision kernel \eqref{invariance of the collision kernel}, we deduce that 
			\small
			\begin{align*}
				I=  &\int_{\mathbb{S}^{5} \times \mathbb{R}^{9}} {\mu}\mu_{1}\mu_{2} B_3(\bm{u}, \bm{\omega}) \bigg[\mu^{-\frac{1}{2}}g+(\mu^{-\frac{1}{2}}g)_{1}+(\mu^{-\frac{1}{2}}g)_{2} \\
				&\quad\quad-(\mu^{-\frac{1}{2}}g)^{*} -(\mu^{-\frac{1}{2}}g)^{*}_{1}-(\mu^{-\frac{1}{2}}g)^{*}_{2}\bigg]\mu^{-\frac{1}{2}}h  d\bm{\omega} dv_{1}dv_{2}dv\\[0.5mm] 
				=  &\frac{1}{2}\int_{\mathbb{S}^{5} \times \mathbb{R}^{9}} {\mu}\mu_{1}\mu_{2} B_3(\bm{u}, \bm{\omega}) \bigg[\mu^{-\frac{1}{2}}g+(\mu^{-\frac{1}{2}}g)_{1}+(\mu^{-\frac{1}{2}}g)_{2} \\
				&\quad\quad-(\mu^{-\frac{1}{2}}g)^{*} -(\mu^{-\frac{1}{2}}g)^{*}_{1}-(\mu^{-\frac{1}{2}}g)^{*}_{2}\bigg]\bigg(\mu^{-\frac{1}{2}}h -(\mu^{-\frac{1}{2}}h)^{*}\bigg) d\bm{\omega} dv_{1}dv_{2}dv .
			\end{align*} 
			\normalsize
			Additionally, following the proof of Proposition 5.5. in \cite{MR4337060}  and using the variable transformations \( (v, v_1 ) \mapsto (v_1, v) \) and \( (\omega_{1}, \omega_{2}, v_1, v_2) \mapsto (\omega_{2}, \omega_{1}, v_2, v_1) \), we arrive at the conclusion that 
			\small
			\begin{align*}
				II=  &2\int_{\mathbb{S}^{5} \times \mathbb{R}^{9}} {\mu}\mu_{1}\mu_{2} B_3(\bm{u}_1, \bm{\omega}) \bigg[\mu^{-\frac{1}{2}}g+(\mu^{-\frac{1}{2}}g)_{1}+(\mu^{-\frac{1}{2}}g)_{2} \\
				&\quad\quad-(\mu^{-\frac{1}{2}}g)^{1*} -(\mu^{-\frac{1}{2}}g)^{1*}_{1}-(\mu^{-\frac{1}{2}}g)^{1*}_{2}\bigg]  \mu^{-\frac{1}{2}}h  d\bm{\omega} dv_{1}dv_{2}dv\\
				=&2\int_{\mathbb{S}^{5} \times \mathbb{R}^{9}} {\mu}\mu_{1}\mu_{2} B_3(\bm{u}, \bm{\omega}) \bigg[\mu^{-\frac{1}{2}}g+(\mu^{-\frac{1}{2}}g)_{1}+(\mu^{-\frac{1}{2}}g)_{2} \\
				&\quad\quad-(\mu^{-\frac{1}{2}}g)^{*} -(\mu^{-\frac{1}{2}}g)^{*}_{1}-(\mu^{-\frac{1}{2}}g)^{*}_{2}\bigg] (\mu^{-\frac{1}{2}}h )_{1} d\bm{\omega} dv_{1}dv_{2}dv\\
				=&\int_{\mathbb{S}^{5} \times \mathbb{R}^{9}} {\mu}\mu_{1}\mu_{2} B_3(\bm{u}, \bm{\omega}) \bigg[\mu^{-\frac{1}{2}}g+(\mu^{-\frac{1}{2}}g)_{1}+(\mu^{-\frac{1}{2}}g)_{2} \\
				&\quad\quad-(\mu^{-\frac{1}{2}}g)^{*} -(\mu^{-\frac{1}{2}}g)^{*}_{1}-(\mu^{-\frac{1}{2}}g)^{*}_{2}\bigg] \bigg((\mu^{-\frac{1}{2}}h )_{1}+(\mu^{-\frac{1}{2}}h )_{2}\bigg) d\bm{\omega} dv_{1}dv_{2}dv,
			\end{align*} 
			\normalsize
			which implies that 
			\small
			\begin{align*}
				II =&\frac{1}{2}\int_{\mathbb{S}^{5} \times \mathbb{R}^{9}} {\mu}\mu_{1}\mu_{2} B_3(\bm{u}, \bm{\omega}) \bigg[\mu^{-\frac{1}{2}}g+(\mu^{-\frac{1}{2}}g)_{1}+(\mu^{-\frac{1}{2}}g)_{2}-(\mu^{-\frac{1}{2}}g)^{*} -(\mu^{-\frac{1}{2}}g)^{*}_{1} \\
				&\quad\quad-(\mu^{-\frac{1}{2}}g)^{*}_{2}\bigg]   \bigg((\mu^{-\frac{1}{2}}h )_{1}+(\mu^{-\frac{1}{2}}h )_{2}-(\mu^{-\frac{1}{2}}h)^{*}_{1}-(\mu^{-\frac{1}{2}}h)^{*}_{2}\bigg) d\bm{\omega} dv_{1}dv_{2}dv,
			\end{align*} 
			\normalsize
			by applying the variable transformation \( T_{1,\bm{\omega}}: (v, v_1, v_2) \mapsto (v^*, v_1^*, v_2^*) \) again.
			Hence, we conclude that
			\small
			\begin{align*}
				\langle L_T g, h \rangle=&I+II\\
				= &\frac{1}{2}\int_{\mathbb{S}^{5} \times \mathbb{R}^{9}} {\mu}\mu_{1}\mu_{2} B_3(\bm{u}, \bm{\omega}) \bigg[\mu^{-\frac{1}{2}}g+(\mu^{-\frac{1}{2}}g)_{1}+(\mu^{-\frac{1}{2}}g)_{2}  \\
				&\quad\quad-(\mu^{-\frac{1}{2}}g)^{*}-(\mu^{-\frac{1}{2}}g)^{*}_{1}-(\mu^{-\frac{1}{2}}g)^{*}_{2}\bigg]   \bigg[\mu^{-\frac{1}{2}}h+(\mu^{-\frac{1}{2}}h)_{1}+(\mu^{-\frac{1}{2}}h)_{2}  \\
				&\quad\quad-(\mu^{-\frac{1}{2}}h)^{*}-(\mu^{-\frac{1}{2}}h)^{*}_{1}-(\mu^{-\frac{1}{2}}h)^{*}_{2}\bigg]  d\bm{\omega} dv_{1}dv_{2}dv\\
				=&\langle g,  L_Th \rangle,
			\end{align*} 
			\normalsize
			and
			\small
			\begin{align*}
				\langle L_T g, g \rangle
				= &\frac{1}{2}\int_{\mathbb{S}^{5} \times \mathbb{R}^{9}} {\mu}\mu_{1}\mu_{2} B_3(\bm{u}, \bm{\omega}) \bigg[\mu^{-\frac{1}{2}}g+(\mu^{-\frac{1}{2}}g)_{1}+(\mu^{-\frac{1}{2}}g)_{2}  \\
				&\quad\quad-(\mu^{-\frac{1}{2}}g)^{*}-(\mu^{-\frac{1}{2}}g)^{*}_{1}-(\mu^{-\frac{1}{2}}g)^{*}_{2}\bigg]^{2}  d\bm{\omega} dv_{1}dv_{2}dv\\
				\ge&0.
			\end{align*} 
			\normalsize
			From the above equation, we can see that \(\langle L_T g, g \rangle = 0\) if and only if
			$$ \mu^{-\frac{1}{2}}g+(\mu^{-\frac{1}{2}}g)_{1}+(\mu^{-\frac{1}{2}}g)_{2} -(\mu^{-\frac{1}{2}}g)^{*}-(\mu^{-\frac{1}{2}}g)^{*}_{1}-(\mu^{-\frac{1}{2}}g)^{*}_{2}=0.$$
			By Proposition \ref{prop:collision_invariant}, this is also equivalent to
			$$g=\sqrt{\mu}(a +  \mathbf{b}\cdot v  + c|v|^2)$$
			for some \( a, c \in \mathbb{R} \) and \( \mathbf{b} \in \mathbb{R}^3 \). 
			Finally, it is evident from the principles of momentum and energy conservation \eqref{t conservation} that  \( g = \sqrt{\mu}(a + \mathbf{b} \cdot v + c|v|^2) \) satisfies \( L_T g = 0 \). 
			Consequently, the kernel of the linearized ternary operator is given by:  
			\[
			\operatorname{Ker}(L_T) = \operatorname{span} \left\{ \sqrt{\mu}, \, v_1\sqrt{\mu}, \, v_2\sqrt{\mu}, \, v_3\sqrt{\mu}, \, |v|^2\sqrt{\mu} \right\}.
			\]  
		\end{proof}
		
			\begin{proposition}[Properties of the Linearized  Operator]\label{prop:L_properties}
			The linearized operator \( L = L_{{B}} + L_{{T}} \) adheres to the following:
			\begin{enumerate}
				\item[(\romannumeral 1)] \( L \) is a symmetric operator on \( L^2_v \):  
				\[
				\langle Lg, h \rangle = \langle g, L h \rangle.
				\]
				\item[(\romannumeral 2)] \( L \) is a positive operator on \( L^2_v \):
				\[
				\langle L g, g \rangle \geq 0,
				\]
				with equality holding if and only if
				\[
				g = \sqrt{\mu}(a + \mathbf{b} \cdot v + c|v|^2)
				\]
				for some \( a, c \in \mathbb{R} \) and \( \mathbf{b} \in \mathbb{R}^3 \).
				\item[(\romannumeral 3)] \( Lg = 0 \) if and only if \( g = \mathbf{P}g \), where \( \mathbf{P} \) denotes the orthogonal projection onto the 5-dimensional subspace of collision invariants:
				\[
				\operatorname{span} \left\{ \sqrt{\mu}, \, v_1\sqrt{\mu}, \, v_2\sqrt{\mu}, \, v_3\sqrt{\mu}, \, |v|^2\sqrt{\mu} \right\}.
				\]
				This implies
				\[
				\operatorname{Ker}(L) = \operatorname{span} \left\{ \sqrt{\mu}, \, v_1\sqrt{\mu}, \, v_2\sqrt{\mu}, \, v_3\sqrt{\mu}, \, |v|^2\sqrt{\mu} \right\}.
				\]
				\item[(\romannumeral 4)] There exists a \( \delta_{0} > 0 \) such that  
				$$\langle L g, g \rangle \geq\delta_{0}\|\{\mathbf{I-P}\}g\|_{L^{2}_{\mathcal{D}}}^2. $$
			\end{enumerate}     
		\end{proposition}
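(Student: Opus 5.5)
The plan is to combine Proposition \ref{prop:LT_properties} with the classical facts for the binary operator \(L_B\) (see \cite{Glassey1996} for hard potentials and \cite{MR2013332} for soft potentials, where \(L_B=\nu_B-K_B\)). The classical facts we need are that \(L_B\) is symmetric and nonnegative on \(L^2_v\), that \(\operatorname{Ker}(L_B)\) is the same five-dimensional span of collision invariants, and that the spectral-gap/coercivity estimate
\[
\langle L_B g,g\rangle\ge \delta_0\|\{\mathbf{I}-\mathbf{P}\}g\|^2_{L^2_{\mathcal{D}}}
\]
holds for \(\gamma_2\in(-3,1]\), with \(\|\cdot\|_{L^2_{\mathcal D}}\) weighted by \(\nu_B\sim\langle v\rangle^{\gamma_2}\) by \eqref{nuB}.

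\textbf{Items (i) and (ii).} Symmetry (i) is immediate by linearity. Since \(\langle L_Bg,h\rangle=\langle g,L_Bh\rangle\) and, by Proposition \ref{prop:LT_properties}(i), \(\langle L_Tg,h\rangle=\langle g,L_Th\rangle\), the sum is symmetric. For (ii), both quadratic forms are nonnegative, so \(\langle Lg,g\rangle\ge 0\). If \(\langle Lg,g\rangle=0\), then both \(\langle L_Bg,g\rangle=0\) and \(\langle L_Tg,g\rangle=0\). Proposition \ref{prop:LT_properties}(ii) then already forces \(g=\sqrt\mu(a+\mathbf b\cdot v+c|v|^2)\); the binary characterization gives the same conclusion. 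Conversely, such \(g\) lies in \(\operatorname{Ker}L_B\cap\operatorname{Ker}L_T\), so the form vanishes.

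\textbf{Item (iii).} If \(g\in\) the span, then \(L_Bg=L_Tg=0\), so \(Lg=0\). Conversely, if \(Lg=0\), then \(\langle Lg,g\rangle=0\), and (ii) places \(g\) in the span, i.e. \(g=\mathbf Pg\). Here \(\mathbf P\) is the \(L^2_v\)-orthogonal projection onto this span, so \(\operatorname{Ker}L\) is exactly that span.

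\textbf{Item (iv).} Since \(\langle L_Tg,g\rangle\ge0\), we have \(\langle Lg,g\rangle\ge\langle L_Bg,g\rangle\), and it suffices to invoke the coercivity of \(L_B\). This is also why the dissipation is defined only through \(\nu_B\). If one wants a self-contained argument for \(L_B\), the standard route has three steps:
\begin{enumerate}
\item Write \(\langle L_B g,g\rangle=\|g\|^2_{L^2_{\mathcal D}}-\langle K_Bg,g\rangle\).
\item Use the compactness of \(K_B\) (hard potentials), or its almost-compactness relative to \(\nu_B\) as in \cite{MR2013332}, Lemma 2 (soft potentials).
\item Argue by contradiction: take a normalized sequence \(g_n\perp\operatorname{Ker}L_B\) with \(\|g_n\|_{L^2_{\mathcal D}}=1\) and \(\langle L_Bg_n,g_n\rangle\to0\). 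Extract a weak limit \(g\); compactness yields \(\langle K_Bg_n,g_n\rangle\to\langle K_Bg,g\rangle=1\), so \(g\neq0\), while lower semicontinuity gives \(\langle L_Bg,g\rangle=0\). Hence \(g\in\operatorname{Ker}L_B\) and \(g\perp\operatorname{Ker}L_B\), a contradiction.
\end{enumerate}
The main obstacle is precisely this coercivity step in the soft-potential range, where the weight \(\langle v\rangle^{\gamma_2}\) degenerates. The plan is to cite the known binary result rather than reprove it, noting that \(\{\mathbf I-\mathbf P\}g\) is orthogonal to the common kernel, so the classical estimate applies verbatim.
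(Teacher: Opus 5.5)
Your proposal is correct and follows the paper's route. Items (i)--(iii) come from combining Proposition~\ref{prop:LT_properties} with the classical symmetry, nonnegativity and kernel characterization of $L_B$. Item (iv) comes from $\langle L_T g,g\rangle\ge 0$ together with the cited binary coercivity $\langle L_B g,g\rangle\ge\delta_0\|\{\mathbf{I}-\mathbf{P}\}g\|^2_{L^2_{\mathcal D}}$ for $\gamma_2\in(-3,1]$. Your compactness-by-contradiction sketch is the standard argument behind the references the paper simply cites, so it adds detail but not a different route.
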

		\begin{proof}
			As per the classical results, such as the analysis in Chapter 3 of \cite{Glassey1996}, Proposition \ref{prop:LT_properties} also holds for the binary linearized Boltzmann operator \( L_{{B}} \). Hence, in conjunction with Proposition \ref{prop:LT_properties},  (\romannumeral1), (\romannumeral2), and (\romannumeral3) of this proposition are evident.
			Furthermore, in light of \cite{Glassey1996,MR2013332}, for \( \gamma_{2} \in (-3,1] \), there exists a \( \delta_{0} > 0 \) such that
			$$
			\langle L_{B} g, g \rangle \geq \delta_{0} \| \{\mathbf{I-P}\} g \|_{L^{2}_{\mathcal{D}}}^2.
			$$
			Consequently, given that
			$$
			\langle L_{T} g, g \rangle \geq 0,
			$$
			it follows that there exists a \( \delta > 0 \) such that
			$$
			\langle L g, g \rangle = \langle L_{B} g, g \rangle + \langle L_{T} g, g \rangle \geq \delta_{0} \| \{\mathbf{I-P}\} g \|_{L^{2}_{\mathcal{D}}}^2.
			$$ 
		\end{proof}

		\section{Estimates of the Nonlinear Terms}\label{sect.Estimates of the Nonlinear Terms}

		In this section, we establish and prove several essential estimates required to control the nonlinear terms \(\Gamma_{B}\), \(\Gamma_{T}\), and \(T\) within the energy method framework. Furthermore, these estimates are also employed to derive corresponding bounds for the linear terms.

		\begin{lemma}\label{Estimates of GammaB}
			For \( \gamma_{2} \in (-\frac{3}{2},1] \), it is established that
			$$ |\langle\Gamma_{B,-}(g, h),\eta \rangle|\lesssim \|h\|_{L^{2}_{v}}\|g\|_{L^{2}_{\gamma_{2}}}\|\eta\|_{L^{2}_{\gamma_{2}}},$$
			and
			$$ |\langle\Gamma_{B,+}(g, h),\eta \rangle|\lesssim \big(\|g\|_{L^{2}_{v}}\|h\|_{L^{2}_{\gamma_{2}}}+\|h\|_{L^{2}_{v}}\|g\|_{L^{2}_{\gamma_{2}}}\big)\|\eta\|_{L^{2}_{\gamma_{2}}}.$$
		\end{lemma}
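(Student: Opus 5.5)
For the loss term I would start from the factorised form $\Gamma_{B,-}(g,h)=g\int_{\mathbb{S}^2\times\mathbb{R}^3}\sqrt{\mu_1}B_2(u,\omega)h_1\,d\omega dv_1$. Integrating out $\omega$ via \eqref{cutoff b2} gives
\[
\Gamma_{B,-}(g,h)=b\,g(v)\int_{\mathbb{R}^3}|v-v_1|^{\gamma_2}\sqrt{\mu_1}\,h_1\,dv_1 .
\]
Cauchy–Schwarz in $v_1$ then yields
\[
\Big|\int|v-v_1|^{\gamma_2}\sqrt{\mu_1}h_1\,dv_1\Big|\le\|h\|_{L^2_v}\Big(\int|v-v_1|^{2\gamma_2}\mu_1\,dv_1\Big)^{1/2}\lesssim\|h\|_{L^2_v}\langle v\rangle^{\gamma_2}.
\]
The last step uses the convolution estimate (Lemma \ref{Convolution estimates}), and it is exactly here that $2\gamma_2>-3$, i.e. $\gamma_2>-\frac32$, is needed. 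Hence
\[
|\langle\Gamma_{B,-}(g,h),\eta\rangle|\lesssim\|h\|_{L^2_v}\int\langle v\rangle^{\gamma_2}|g||\eta|\,dv\le\|h\|_{L^2_v}\|g\|_{L^2_{\gamma_2}}\|\eta\|_{L^2_{\gamma_2}},
\]
where the last inequality is Cauchy–Schwarz after splitting $\langle v\rangle^{\gamma_2}=\langle v\rangle^{\gamma_2/2}\langle v\rangle^{\gamma_2/2}$.

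For the gain term I would write
\[
\langle\Gamma_{B,+}(g,h),\eta\rangle=\int B_2\sqrt{\mu_1}\,g'h_1'\,\eta\,d\omega\,dv_1\,dv
\]
and apply Cauchy–Schwarz with the kernel $B_2$ split symmetrically:
\[
|\cdot|\le\Big(\int B_2\,\mu_1^{1/2}|g'|^2|h_1'|^2\,d\omega dv_1dv\Big)^{1/2}\Big(\int B_2\,\mu_1^{1/2}|\eta|^2\,d\omega dv_1dv\Big)^{1/2}.
\]
The second factor is bounded by $\|\eta\|_{L^2_{\gamma_2}}$, since $\int|v-v_1|^{\gamma_2}\mu_1^{1/2}\,dv_1\lesssim\langle v\rangle^{\gamma_2}$ (again by the convolution lemma, fine for $\gamma_2>-3$). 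In the first factor I would change variables $(v,v_1)\mapsto(v',v_1')$ using the involution $T_\omega$ with unit Jacobian and $B_2(u',\omega)=B_2(u,\omega)$. This turns it into
\[
\int B_2(u,\omega)\,\mu^{1/2}(v_1')\,|g(v)|^2|h(v_1)|^2\,d\omega\,dv\,dv_1 .
\]
Bounding $\mu^{1/2}(v_1')\le1$ is too crude, but it still gives $\int|v-v_1|^{\gamma_2}|g|^2|h_1|^2$. I would then use
\[
|v-v_1|^{\gamma_2}\lesssim\langle v\rangle^{\gamma_2}+\langle v_1\rangle^{\gamma_2}\quad\text{for }\gamma_2\ge0 .
\]
This produces $\|g\|^2_{L^2_{\gamma_2}}\|h\|^2_{L^2_v}+\|g\|^2_{L^2_v}\|h\|^2_{L^2_{\gamma_2}}$, whose square root is the claimed bound for hard potentials.

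The main obstacle is the soft-potential range $\gamma_2\in(-\frac32,0)$, where $|v-v_1|^{\gamma_2}$ is singular and not bounded by weights. I would first try to keep the factor $\mu^{1/4}(v_1')$ (or use $\mu(v')\mu(v_1')=\mu\mu_1$ to trade it for a decay factor) and distribute the singular kernel differently. Concretely, I would put $|v-v_1|^{\gamma_2}$ entirely into the $\eta$ factor on the pre-collision side, estimating $\langle\Gamma_{B,+},\eta\rangle$ with $\sqrt{\mu_1}$ and $\eta(v)$ grouped together. After the change of variables, $|g(v')h(v_1')|^2$ would then carry no singular weight, which needs only an $L^2\times L^2$ bound, with the singularity absorbed by $\int|v-v_1|^{2\gamma_2}\mu_1^{1/2}\,dv_1\lesssim\langle v\rangle^{2\gamma_2}$ using $2\gamma_2>-3$. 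Failing that, I would follow Guo's argument for soft potentials (\cite{MR2013332}), splitting $|u|\le1$ versus $|u|>1$. On $|u|>1$ the weight is $\sim\langle v\rangle^{\gamma_2}$-comparable after the $\mu_1$ localisation. On $|u|\le1$ the singularity is integrable in $L^2$ precisely because $\gamma_2>-\frac32$, and $v,v_1$ are comparable so weights transfer freely. In either case the right-hand side is the symmetric sum stated in the lemma.
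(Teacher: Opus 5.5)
\textbf{Verdict: there is a genuine gap.} Your loss-term estimate is essentially the paper's argument. Your hard-potential gain estimate (symmetric Cauchy--Schwarz, the involution $T_\omega$, then $|v-v_1|^{\gamma_2}\le\langle v\rangle^{\gamma_2}+\langle v_1\rangle^{\gamma_2}$) is in line with the Guo argument the paper cites for $\gamma_2\in[0,1]$. The gap is the gain term for $\gamma_2\in(-\frac32,0)$, which is the case the paper actually writes out.

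\textbf{Your first route does not give the lemma.} There you group all of $|u|^{\gamma_2}$ with $\sqrt{\mu_1}\,\eta$ and then use an unweighted $L^2\times L^2$ bound on $g'h_1'$ after changing variables. This produces $\|g\|_{L^2_v}\|h\|_{L^2_v}\|\eta\|_{L^2_{2\gamma_2}}$, which does not imply the stated bound. For $\gamma_2<0$ one has $\|h\|_{L^2_{\gamma_2}}\le\|h\|_{L^2_v}$, so the lemma requires one of $g,h$ to carry the decaying weight $\langle v\rangle^{\gamma_2/2}$. Concretely, take $g=h$ normalized in $L^2_v$ and supported in $\{R\le|v|\le 2R\}$, and $\eta$ supported in the unit ball. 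Then the lemma's right-hand side is $O(R^{\gamma_2/2})\|\eta\|_{L^2_{\gamma_2}}$, while your bound stays comparable to $\|\eta\|_{L^2_{\gamma_2}}$. So your closing claim that ``in either case'' you obtain the stated sum is false for this route.

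\textbf{Your fallback is incomplete.} On $|u|\le1$ the split $|u|\le 1$ versus $|u|>1$ works: all four velocities lie within distance $2$ of $v$, so $\sqrt{\mu_1}$ gives Gaussian decay in each, and $\int_{|u|\le1}|u|^{2\gamma_2}\,dv_1<\infty$ because $\gamma_2>-\frac32$. On $|u|>1$, however, saying the weight is comparable to $\langle v\rangle^{\gamma_2}$ only places a weight at the pre-collisional $v$, i.e.\ on $\eta$. You still need to move a factor $\langle\cdot\rangle^{\gamma_2}$ onto $v'$ or $v_1'$ before changing variables, and that requires energy conservation, which you never use. It can be supplied: $|v'|^2\le|v|^2+|v_1|^2$ gives $\langle v\rangle^{\gamma_2}\le\langle v'\rangle^{\gamma_2}\langle v_1\rangle^{|\gamma_2|}$, and the powers of $\langle v_1\rangle$ are absorbed by $\mu_1^{1/4}$. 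With that step your $|u|$-split becomes a valid alternative.

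\textbf{How the paper does it.} The paper splits instead into $\{|v_1|\ge|v|/2\}$ and $\{|v_1|\le|v|/2\}$.
On the first set, $\sqrt{\mu_1}$ is dominated by a small power of $\mu\mu_1=\mu'\mu_1'$. So $g'$, $h_1'$ and $\eta$ all acquire Gaussian weights and any polynomial weight is free.
On the second set, $|v'|^2+|v_1'|^2=|v|^2+|v_1|^2\lesssim|v|^2$ gives $\langle v\rangle^{\gamma_2}\lesssim\min\{\langle v'\rangle^{\gamma_2},\langle v_1'\rangle^{\gamma_2}\}$. There the paper uses exactly your asymmetric Cauchy--Schwarz, with $\int\mu_1|u|^{2\gamma_2}b_2\,d\omega\,dv_1\lesssim\langle v\rangle^{2\gamma_2}$. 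The difference is that it does not leave the resulting $\langle v\rangle^{\gamma_2}$ entirely on $\eta$. It splits it as $\langle v\rangle^{\gamma_2/2}\cdot\langle v\rangle^{\gamma_2/2}$ and moves one half inside the $|g'h_1'|^2$ integral. The inequality above then converts that half into a post-collisional weight before the change of variables. This weight transfer is the missing idea, and your first attempt is one step away from it.
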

	    \begin{proof}
	    	The case for \( \gamma_{2} \in [0,1] \) can be established similarly by combining Lemma \ref{Convolution estimates} with the proof of Lemma 2.3 in \cite{MR1908664}. We therefore concentrate on the scenario where \( \gamma_{2} \in (-\frac{3}{2},0) \). 
	    	Starting with the estimate for the \( \Gamma_{B,-} \) term, we apply Lemma \ref{Convolution estimates} with \( d=3 \) and \( q=\gamma_{2} \), yielding
	    	\small
	    	\begin{align*}
	    		|\langle\Gamma_{B,-}(g, h),\eta \rangle| \leq&\int_{\mathbb{S}^{2} \times \mathbb{R}^6} \sqrt{\mu_{1}} |u|^{\gamma_{2}}\, b_2 \, |g h_1 \eta| \, d\omega dv_1dv  \\
	    		\lesssim&\int_{ \mathbb{R}^3} \bigg( \int_{\mathbb{S}^{2} \times \mathbb{R}^3} {\mu_{1}} |u|^{2\gamma_{2}}\, b_2d\omega dv_1\bigg)^{\frac{1}{2}} \bigg( \int_{\mathbb{S}^{2} \times \mathbb{R}^3} |h_1|^{2} b_2d\omega dv_1\bigg)^{\frac{1}{2}} |g \eta|  dv\\
	    		\lesssim& \|h\|_{L^{2}_{v}} \int_{ \mathbb{R}^3} \langle v\rangle^{\gamma_{2}} |g \eta|  dv\\
	    		\lesssim& \|h\|_{L^{2}_{v}}\|g\|_{L^{2}_{\gamma_{2}}}\|\eta\|_{L^{2}_{\gamma_{2}}}.
	    	\end{align*}
	    	\normalsize 
	    	Moving on to the more intricate \( \Gamma_{B,+} \) term, we first observe that
	    	$$ |\langle\Gamma_{B,+}(g, h),\eta \rangle| \le \int_{\mathbb{S}^{2} \times \mathbb{R}^6} \sqrt{\mu_{1}} |u|^{\gamma_{2}}\, b_2 \, |g' h_1' \eta| \, d\omega dv_1dv .$$ 
	    	We decompose the integration region into
	    	$$\left\{|v_1| \ge \frac{|v|}{2}\right\} \cup \left\{|v_1| \le \frac{|v|}{2}\right\}.$$ 
	    	In the region \(\left\{|v_1| \ge \frac{|v|}{2}\right\}\), we utilize the inequality
	    	$$ \sqrt{\mu_{1}} \le (\mu\mu_{1})^{\frac{1}{4}} = (\mu\mu_{1}\mu'\mu_{1}')^{\frac{1}{8}}.$$ 
	    	Accordingly, we deduce that
	    	\small
	    	\begin{align*}
	    		&\int_{\mathbb{S}^{2} \times \mathbb{R}^6} \mathbf{1}_{\left\{|v_1| \ge \frac{|v|}{2}\right\}}\sqrt{\mu_{1}} |u|^{\gamma_{2}}\, b_2 \, |g' h_1' \eta| \, d\omega dv_1dv  \\
	    		\lesssim&\int_{\mathbb{S}^{2} \times \mathbb{R}^6}  (\mu\mu_{1}\mu'\mu_{1}')^{\frac{1}{8}} |u|^{\gamma_{2}}\, b_2 \, |g' h_1' \eta| \, d\omega dv_1dv \\
	    		\lesssim&\int_{ \mathbb{R}^3} \bigg( \int_{\mathbb{S}^{2} \times \mathbb{R}^3} {\mu_{1}}^{\frac{1}{4}} |u|^{2\gamma_{2}}\, b_2d\omega dv_1\bigg)^{\frac{1}{2}} \bigg( \int_{\mathbb{S}^{2} \times \mathbb{R}^3} |(\mu^{\frac{1}{8}}g)' (\mu^{\frac{1}{8}}h)'_1|^{2} b_2d\omega dv_1\bigg)^{\frac{1}{2}} |\mu^{\frac{1}{8}}\eta|  dv\\
	    		\lesssim&  \bigg( \int_{\mathbb{S}^{2} \times \mathbb{R}^6} |(\mu^{\frac{1}{8}}g)' (\mu^{\frac{1}{8}}h)'_1|^{2} b_2d\omega dv_1dv\bigg)^{\frac{1}{2}} \|\mu^{\frac{1}{8}}\eta\|_{L^{2}_{v}}\\
	    		=& \bigg( \int_{\mathbb{S}^{2} \times \mathbb{R}^6} |(\mu^{\frac{1}{8}}g)' (\mu^{\frac{1}{8}}h)'_1 |^{2} b_2d\omega dv'_1dv'\bigg)^{\frac{1}{2}} \|\mu^{\frac{1}{8}}\eta\|_{L^{2}_{v}}\\
	    		\lesssim& \|\mu^{\frac{1}{8}}g\|_{L^{2}_{v}}\|\mu^{\frac{1}{8}}h\|_{L^{2}_{v}}\|\mu^{\frac{1}{8}}\eta\|_{L^{2}_{v}}.
	    	\end{align*}
	    	\normalsize 
	    	In the region \(\left\{|v_1| \le \frac{|v|}{2}\right\}\), invoking energy conservation \eqref{b conservation}, we obtain
	    	$$ |v|^{2} \gtrsim |v|^2 + |v_1|^2 = |v'|^2 + |v'_1|^2, $$
	    	which implies
	    	$$ \langle v \rangle^{\gamma_{2}} \lesssim \min \{ \langle v' \rangle^{\gamma_{2}}, \langle v'_{1} \rangle^{\gamma_{2}} \}. $$ 
	    	Consequently, we derive that
	    	\small
	    	\begin{align*}
	    		&\int_{\mathbb{S}^{2} \times \mathbb{R}^6} \mathbf{1}_{\left\{|v_1| \le \frac{|v|}{2}\right\}}\sqrt{\mu_{1}} |u|^{\gamma_{2}}\, b_2 \, |g' h_1' \eta| \, d\omega dv_1dv  \\ 
	    		\lesssim&\int_{ \mathbb{R}^3} \bigg( \int_{\mathbb{S}^{2} \times \mathbb{R}^3} {\mu_{1}} |u|^{2\gamma_{2}}\, b_2d\omega dv_1\bigg)^{\frac{1}{2}}  \bigg( \int_{\mathbb{S}^{2} \times \mathbb{R}^3} \mathbf{1}_{\left\{|v_1| \le \frac{|v|}{2}\right\}}|g' h'_1|^{2} b_2d\omega dv_1\bigg)^{\frac{1}{2}} |\eta|  dv\\
	    		\lesssim&  \bigg( \int_{\mathbb{S}^{2} \times \mathbb{R}^6}\langle v \rangle^{\gamma_{2}} \mathbf{1}_{\left\{|v_1| \le \frac{|v|}{2}\right\}}|g' h'_1 |^{2} b_2d\omega dv_1dv\bigg)^{\frac{1}{2}} \|\eta\|_{L^{2}_{\gamma_{2}}}\\
	    		\lesssim&  \bigg( \int_{\mathbb{S}^{2} \times \mathbb{R}^6}\min \{ \langle v' \rangle^{\gamma_{2}}, \langle v'_{1} \rangle^{\gamma_{2}} \}|g' h'_1 |^{2} b_2d\omega dv'_1dv'\bigg)^{\frac{1}{2}} \|\eta\|_{L^{2}_{\gamma_{2}}}\\
	    		\lesssim& \min \{ \|h\|_{L^{2}_{v}}\|g\|_{L^{2}_{\gamma_{2}}},\|g\|_{L^{2}_{v}}\|h\|_{L^{2}_{\gamma_{2}}}   \}\|\eta\|_{L^{2}_{\gamma_{2}}}.
	    	\end{align*}
	    	\normalsize 
	    	Combining both subdomains, we conclude that
	    	$$ |\langle\Gamma_{B,+}(g, h),\eta \rangle| \lesssim \min \{ \|h\|_{L^{2}_{v}}\|g\|_{L^{2}_{\gamma_{2}}},\|g\|_{L^{2}_{v}}\|h\|_{L^{2}_{\gamma_{2}}}   \}\|\eta\|_{L^{2}_{\gamma_{2}}}. $$
	    	
	    	Thus, the lemma has been established.
	    \end{proof}

		\begin{lemma}\label{Estimates of T}
			For \( \gamma_{3} \in (-3,1] \), the following estimates hold:
			$$ |\langle T_{-}(g, h,\xi),\eta \rangle|\lesssim \|h\|_{L^{2}_{v}}\|\xi\|_{L^{2}_{v}}\|g\|_{L^{2}_{\gamma_{3}}}\|\eta\|_{L^{2}_{\gamma_{3}}},$$
			and
			\small
			\begin{align*}
				|\langle T_{+}&(g, h,\xi),\eta \rangle|\\
				\lesssim&\big(\|g\|_{L^{2}_{v}}\|\xi\|_{L^{2}_{v}}\|h\|_{L^{2}_{\gamma_{3}}} +\|h\|_{L^{2}_{v}}\|\xi\|_{L^{2}_{v}}\|g\|_{L^{2}_{\gamma_{3}}}+\|g\|_{L^{2}_{v}}\|h\|_{L^{2}_{v}}\|\xi\|_{L^{2}_{\gamma_{3}}}\big)\|\eta\|_{L^{2}_{\gamma_{3}}}.
			\end{align*} 
		\normalsize
		\end{lemma}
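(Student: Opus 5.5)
The plan is to mimic the proof of Lemma \ref{Estimates of GammaB}, now in dimension $d=6$ for the pair $(v_1,v_2)$, using \eqref{tildeu-u} to replace $|\tilde{\bm u}|^{\gamma_3}$ by $|\bm u|^{\gamma_3}$ (or $|\bm u_1|^{\gamma_3}$) up to constants. For $\gamma_3\ge0$ we have $|\tilde{\bm u}|^{\gamma_3}\lesssim \langle v\rangle^{\gamma_3}\langle v_1\rangle^{\gamma_3}\langle v_2\rangle^{\gamma_3}$, and the Gaussian factors absorb the weights in $v_1,v_2$. So the main case is $\gamma_3\in(-3,0)$, where the key point is that $|\bm u|^{2\gamma_3}$ with $2\gamma_3>-6$ is locally integrable in $\mathbb R^6$.

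For $T_-$, I bound $|\langle T_-(g,h,\xi),\eta\rangle|$ by
$$\int \sqrt{\mu_1\mu_2}\,\big(B_3(\bm u,\bm\omega)+2B_3(\bm u_1,\bm\omega)\big)\,|g h_1\xi_2\eta|\, d\bm\omega\, dv_1dv_2dv.$$
I integrate in $\bm\omega$ first, using \eqref{cutoff b3}. Then, for fixed $v$, Cauchy--Schwarz in $(v_1,v_2)$ gives a factor $\big(\int \mu_1\mu_2|\tilde{\bm u}|^{2\gamma_3}dv_1dv_2\big)^{1/2}$ times $\|h\|_{L^2_v}\|\xi\|_{L^2_v}$. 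By Lemma \ref{Convolution estimates} with $d=6$ and $q=2\gamma_3>-6$, and since $|\tilde{\bm u}|\gtrsim|\bm u|$, the first factor is $\lesssim\langle v\rangle^{\gamma_3}$. This step also needs $|\tilde{\bm u}|\ge|\bm u_1|/\ldots$, which is again \eqref{tildeu-u}. A final Cauchy--Schwarz in $v$ yields $\|g\|_{L^2_{\gamma_3}}\|\eta\|_{L^2_{\gamma_3}}$.

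For $T_+$, I treat the central and adjacent terms separately. The adjacent term reduces to the central form after the relabelling used in the proof of Proposition \ref{prop:LT_properties}, namely $(v,v_1)\mapsto(v_1,v)$, which permutes the roles of $g,h$. This is why the final bound is symmetric in all three functions. For the central term I split into $\{|v_1|+|v_2|\ge |v|/2\}$ and its complement.

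On the first region, $\sqrt{\mu_1\mu_2}\lesssim(\mu\mu_1\mu_2)^{c}=(\mu\mu_1\mu_2\mu^*\mu_1^*\mu_2^*)^{c/2}$ for some small $c>0$. Cauchy--Schwarz then puts $|\tilde{\bm u}|^{2\gamma_3}$ against Gaussian factors, which is integrable. On the remaining product $|(\mu^{c/2}g)^*(\mu^{c/2}h)^*_1(\mu^{c/2}\xi)^*_2|^2$ I change variables by $T_{1,\bm\omega}$ (unit Jacobian), which gives a bound by Gaussian-weighted norms of $g,h,\xi,\eta$. This is stronger than needed.

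On the complement, $|v|^2\gtrsim|v|^2+|v_1|^2+|v_2|^2=|v^*|^2+|v_1^*|^2+|v_2^*|^2$ by \eqref{t conservation}, so $\langle v\rangle^{\gamma_3}\lesssim\min\{\langle v^*\rangle^{\gamma_3},\langle v_1^*\rangle^{\gamma_3},\langle v_2^*\rangle^{\gamma_3}\}$ because $\gamma_3<0$. I then apply Cauchy--Schwarz as in the binary case. One factor is $\big(\int\mu_1\mu_2|\tilde{\bm u}|^{2\gamma_3}\big)^{1/2}\lesssim\langle v\rangle^{\gamma_3}$, which I split as $\langle v\rangle^{\gamma_3/2}\cdot\langle v\rangle^{\gamma_3/2}$: half pairs with $\eta$, and half goes inside the other factor. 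Changing variables by $T_{1,\bm\omega}$ there, the weight can be placed on any one of $g,h,\xi$.

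The main obstacle is this last Cauchy--Schwarz. After applying it, the remaining integral must be over all of $\mathbb R^6_{v_1,v_2}$ with only $d\bm\omega$ and a bounded angular factor. I must check that, after the change of variables, integrating $b_3$ over $\bm\omega$ is uniformly bounded by $\|b_3\|$. This holds because $\bar{\bm u}^*\in\mathbb E_1$ and the kernel is invariant.
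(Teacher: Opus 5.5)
Your treatment of $T_-$ and of the central part of $T_+$ is the paper's argument essentially step for step. You use Cauchy--Schwarz against $\mu_1\mu_2|\tilde{\bm u}|^{2\gamma_3}$ together with Lemma~\ref{Convolution estimates} for $d=6$, $q=2\gamma_3>-6$. For $\gamma_3<0$ you split the domain the same way. On the region $|v_1|+|v_2|\gtrsim|v|$ you absorb into Gaussians and then apply $T_{1,\bm\omega}$. On the complement, energy conservation moves $\langle v\rangle^{\gamma_3}$ onto whichever of $v^*,v_1^*,v_2^*$ you like, with half the weight kept on $\eta$.

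The step that does not work is your reduction of the adjacent gain term. Under $(v,v_1)\mapsto(v_1,v)$ you do get $\bm u_1\mapsto\bm u$ and $(v^{1*},v_1^{1*},v_2^{1*})\mapsto(v_1^*,v^*,v_2^*)$. But the Maxwellian weight and the test function are relabelled as well, so the adjacent term becomes
\[
2\int_{\mathbb{S}^5\times\mathbb{R}^9}\sqrt{\mu\,\mu_2}\,B_3(\bm u,\bm\omega)\,h^*g_1^*\xi_2^*\,\eta_1\,d\bm\omega\,dv\,dv_1\,dv_2 .
\]
Here $\eta$ sits in an adjacent slot and $\sqrt{\mu}$ sits on the central one. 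This is not the central term with $g$ and $h$ interchanged, so the central estimate cannot be quoted for it. The relabelling in the proof of Proposition~\ref{prop:LT_properties} works only because there the weight $\mu\mu_1\mu_2$ and the bracket are symmetric in all three velocities. Likewise, the symmetry of the final bound in $g,h,\xi$ comes from the energy-conservation step, not from the adjacent term.

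The repair is simply not to relabel. In the original variables the adjacent term still carries $\sqrt{\mu_1\mu_2}$, with $\eta$ at $v$.
\begin{itemize}
\item Its radial factor is the same symmetric $|\tilde{\bm u}|^{\gamma_3}$, so $\int\mu_1\mu_2|\tilde{\bm u}|^{2\gamma_3}\,dv_1dv_2\lesssim\langle v\rangle^{2\gamma_3}$ is unchanged.
\item Since $\bar{\bm u}_1\in\mathbb{E}_1$, you have $\int_{\mathbb{S}^5}b_3(\bar{\bm u}_1\cdot\bm\omega,\omega_1\cdot\omega_2)\,d\bm\omega\le\|b_3\|$.
\item $T_{2,\bm\omega}$ is a unit-Jacobian involution that conserves energy and $B_3(\bm u_1,\bm\omega)$.
\end{itemize}
Hence your whole argument runs verbatim with $T_{2,\bm\omega}$ in place of $T_{1,\bm\omega}$, which is what the paper means by ``analogously''.

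One smaller point concerns $\gamma_3\in[0,1]$. There the Gaussians absorb the weights in $v_1,v_2$, which settles $T_-$. In $T_+$, however, the leftover $\langle v\rangle^{\gamma_3}$ must still be transferred to post-collisional variables. This uses $\langle v\rangle^{\gamma_3}\lesssim\langle v^*\rangle^{\gamma_3}+\langle v_1^*\rangle^{\gamma_3}+\langle v_2^*\rangle^{\gamma_3}$ and the same half-splitting, as in Case (i) of the paper. You should state this rather than leave it implicit.
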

	   \begin{proof}
	   	   Firstly, according to  the definitions of $B_3(\bm{u}, \bm{\omega})$ and $B_3(\bm{u}_1, \bm{\omega}) $ in \eqref{B_3},  we note that
	   	   \small
	   	   \begin{align*}
	   	   |\langle T_{-}(g, h,\xi), \eta \rangle| 
	   	   \le&\int_{\mathbb{S}^{5} \times \mathbb{R}^{9}} \sqrt{\mu_{1}\mu_2}  |\tilde{\bm{u}}|^{\gamma_3} \, b_3\left( \bar{\bm{u}} \cdot \bm{\omega}, \omega_1 \cdot \omega_2 \right) |g h_1 \xi_2 \eta| d\bm{\omega} dv_{1}dv_{2}dv\\
	   	   &+2\int_{\mathbb{S}^{5} \times \mathbb{R}^{9}} \sqrt{\mu_{1}\mu_2} |\tilde{\bm{u}}|^{\gamma_3} \, b_3\left( \bar{\bm{u}}_{1} \cdot \bm{\omega}, \omega_1 \cdot \omega_2 \right) |g h_1 \xi_2 \eta| d\bm{\omega} dv_{1}dv_{2}dv,
   	      \end{align*} 
         \normalsize
	   	   and
	   	   \small
	   	   \begin{align*}
	   	   	|\langle T_{+}(g, h,\xi), \eta \rangle| 
	   	   	\le&\int_{\mathbb{S}^{5} \times \mathbb{R}^{9}} \sqrt{\mu_{1}\mu_2}  |\tilde{\bm{u}}|^{\gamma_3} \, b_3\left( \bar{\bm{u}} \cdot \bm{\omega}, \omega_1 \cdot \omega_2 \right) |g^{*} h^{*}_1 \xi^{*}_2 \eta| d\bm{\omega} dv_{1}dv_{2}dv\\
	   	   	&+2\int_{\mathbb{S}^{5} \times \mathbb{R}^{9}} \sqrt{\mu_{1}\mu_2} |\tilde{\bm{u}}|^{\gamma_3} \, b_3\left( \bar{\bm{u}}_{1} \cdot \bm{\omega}, \omega_1 \cdot \omega_2 \right) |g^{1*} h^{1*}_1 \xi^{1*}_2 \eta| d\bm{\omega} dv_{1}dv_{2}dv.
	   	   \end{align*} 
	   	   \normalsize 
	   	   In the subsequent proof, we focus solely on the term containing \(b_3\left( \bar{\bm{u}} \cdot \bm{\omega}, \omega_1 \cdot \omega_2 \right)\), as the process can be analogously extended to the term involving \(b_3\left( \bar{\bm{u}}_{1} \cdot \bm{\omega}, \omega_1 \cdot \omega_2 \right)\). 
	   	   We commence the analysis by estimating the \( T_{-} \) term. 
	   	  Combining \eqref{tildeu-u}, \eqref{cutoff b3}, and Lemma \ref{Convolution estimates} with parameters $d=6$ and $q={\gamma}_{3}$, we obtain
	   	   \small
	   	   \begin{align*}
	   	   	 &\int_{\mathbb{S}^{5} \times \mathbb{R}^{9}} \sqrt{\mu_{1}\mu_2}  |\tilde{\bm{u}}|^{\gamma_3} \, b_3\left( \bar{\bm{u}} \cdot \bm{\omega}, \omega_1 \cdot \omega_2 \right) |g h_1 \xi_2 \eta| d\bm{\omega} dv_{1}dv_{2}dv\\ 
	   	   	 \lesssim&\int_{ \mathbb{R}^3} \bigg( \int_{ \mathbb{S}^{5} \times \mathbb{R}^6} {\mu_{1}}\mu_2 |\bm{u}|^{2\gamma_{3}}\, b_3d\bm{\omega} dv_{1}dv_{2}\bigg)^{\frac{1}{2}} \bigg( \int_{ \mathbb{S}^{5} \times \mathbb{R}^6} |h_1\xi_2|^{2} b_3d\bm{\omega} dv_{1}dv_{2}\bigg)^{\frac{1}{2}} |g \eta|  dv\\
	   	   	 \lesssim& \|h\|_{L^{2}_{v}}\|\xi\|_{L^{2}_{v}} \int_{ \mathbb{R}^3} \langle v\rangle^{\gamma_{3}} |g \eta|  dv\\
	   	   	 \lesssim& \|h\|_{L^{2}_{v}}\|\xi\|_{L^{2}_{v}}\|g\|_{L^{2}_{\gamma_{3}}}\|\eta\|_{L^{2}_{\gamma_{3}}}.
	   	   \end{align*} 
	   	   \normalsize 
	   	   We now turn to estimating the term $T_{+}$, which we divide into the following two cases.
	   	   
	   	   \noindent Case (i):  $\gamma_{3} \in [0,1]$.
	   	    From energy conservation \eqref{t conservation}, we observe that
	   	    $$\langle v\rangle^{\gamma_{3}}\lesssim\langle v^{*}\rangle^{\gamma_{3}}+\langle v^{*}_{1}\rangle^{\gamma_{3}}+\langle v^{*}_{2}\rangle^{\gamma_{3}}. $$
	   	    Accordingly, combining Lemma \ref{Convolution estimates} with the change of variables $T_{1,\bm{\omega}}$ satisfying $|\det T_{1,\bm{\omega}}| = 1$, we conclude that 
	   	   \small
	   	   \begin{align*}
	   	   	&\int_{\mathbb{S}^{5} \times \mathbb{R}^{9}} \sqrt{\mu_{1}\mu_2}  |\tilde{\bm{u}}|^{\gamma_3} \, b_3\left( \bar{\bm{u}} \cdot \bm{\omega}, \omega_1 \cdot \omega_2 \right) |g^{*} h^{*}_1 \xi^{*}_2 \eta| d\bm{\omega} dv_{1}dv_{2}dv\\ 
	   	   	\lesssim&\int_{ \mathbb{R}^3} \bigg( \int_{ \mathbb{S}^{5} \times \mathbb{R}^6} {\mu_{1}}\mu_2 |\bm{u}|^{2\gamma_{3}}\, b_3d\bm{\omega} dv_{1}dv_{2}\bigg)^{\frac{1}{2}} \bigg( \int_{ \mathbb{S}^{5} \times \mathbb{R}^6} |g^{*}h^{*}_1\xi^{*}_2|^{2} b_3d\bm{\omega} dv_{1}dv_{2}\bigg)^{\frac{1}{2}} | \eta|  dv\\
	   	    \lesssim&\int_{ \mathbb{R}^3} \langle v\rangle^{\gamma_{3}} \bigg( \int_{ \mathbb{S}^{5} \times \mathbb{R}^6} |g^{*}h^{*}_1\xi^{*}_2|^{2} b_3d\bm{\omega} dv_{1}dv_{2}\bigg)^{\frac{1}{2}} | \eta|  dv\\
	   	    \lesssim&  \bigg( \int_{ \mathbb{S}^{5} \times \mathbb{R}^9} \langle v\rangle^{\gamma_{3}}|g^{*}h^{*}_1\xi^{*}_2|^{2} b_3d\bm{\omega} dv_{1}dv_{2}dv\bigg)^{\frac{1}{2}}\|\eta\|_{L^{2}_{\gamma_{3}}}\\
	   	   	\lesssim&\bigg( \int_{ \mathbb{S}^{5} \times \mathbb{R}^9} (\langle v^{*}\rangle^{\gamma_{3}}+\langle v^{*}_{1}\rangle^{\gamma_{3}}+\langle v^{*}_{2}\rangle^{\gamma_{3}})|g^{*}h^{*}_1\xi^{*}_2|^{2} b_3d\bm{\omega} dv^{*}_{1}dv^{*}_{2}dv^{*}\bigg)^{\frac{1}{2}}\|\eta\|_{L^{2}_{\gamma_{3}}}\\
	   	   	\lesssim& \big(\|g\|_{L^{2}_{v}}\|\xi\|_{L^{2}_{v}}\|h\|_{L^{2}_{\gamma_{3}}} +\|h\|_{L^{2}_{v}}\|\xi\|_{L^{2}_{v}}\|g\|_{L^{2}_{\gamma_{3}}}+\|g\|_{L^{2}_{v}}\|h\|_{L^{2}_{v}}\|\xi\|_{L^{2}_{\gamma_{3}}}\big)\|\eta\|_{L^{2}_{\gamma_{3}}}.
	   	   \end{align*} 
	   	   \normalsize

	   	   \noindent Case (ii):  $\gamma_{3} \in (-3,0)$.
	   	   In the domain of integration, for any $(v, v_1, v_2) \in \mathbb{R}^9$, we define the vectors 
	   	   $$
	   	   \bm{y} := \begin{pmatrix} v_1 \\ v_2 \end{pmatrix}, \quad \bm{v} := \begin{pmatrix} v \\ v \end{pmatrix}.
	   	   $$ 
	   	   It is evident that $\bm{u} = \bm{y} - \bm{v}$. We partition the integration domain into two regions:
	   	   $$
	   	   \left\{ |\bm{y}| \ge \frac{|\bm{v}|}{2} \right\} \cup \left\{ |\bm{y}| \le \frac{|\bm{v}|}{2} \right\}.
	   	   $$
	   	   In the region $\left\{ |\bm{y}| \ge \frac{|\bm{v}|}{2} \right\}$, we observe that
	   	   $$ \sqrt{\mu_{1}\mu_2}\lesssim(\mu\mu_{1}\mu_2)^{\frac{1}{16}}=(\mu\mu_{1}\mu_2\mu^{*}\mu^{*}_{1}\mu^{*}_2)^{\frac{1}{32}}.$$ 
	   	   From this, and incorporating the change of variables \( T_{1,\bm{\omega}} \), we deduce that
	   	   \small
	   	   \begin{align*}
	   	   	&\int_{\mathbb{S}^{5} \times \mathbb{R}^{9}} \mathbf{1}_{\left\{ |\bm{y}| \ge \frac{|\bm{v}|}{2} \right\}} \sqrt{\mu_{1}\mu_2}  |\tilde{\bm{u}}|^{\gamma_3} \, b_3\left( \bar{\bm{u}} \cdot \bm{\omega}, \omega_1 \cdot \omega_2 \right) |g^{*} h^{*}_1 \xi^{*}_2 \eta| d\bm{\omega} dv_{1}dv_{2}dv\\ 
	   	   	\lesssim&\int_{ \mathbb{R}^3} \bigg( \int_{ \mathbb{S}^{5} \times \mathbb{R}^6} ({\mu_{1}}\mu_2)^\frac{1}{16} |\bm{u}|^{2\gamma_{3}}\, b_3d\bm{\omega} dv_{1}dv_{2}\bigg)^{\frac{1}{2}}\\
	   	   	&\quad\quad\times \bigg( \int_{ \mathbb{S}^{5} \times \mathbb{R}^6} |(\mu^{\frac{1}{32}}g)^{*}(\mu^{\frac{1}{32}}h)^{*}_1(\mu^{\frac{1}{32}}\xi)^{*}_2|^{2} b_3d\bm{\omega} dv_{1}dv_{2}\bigg)^{\frac{1}{2}} |\mu^{\frac{1}{32}} \eta|  dv\\
	   	   	\lesssim&\int_{ \mathbb{R}^3} \bigg( \int_{ \mathbb{S}^{5} \times \mathbb{R}^6} |(\mu^{\frac{1}{32}}g)^{*}(\mu^{\frac{1}{32}}h)^{*}_1(\mu^{\frac{1}{32}}\xi)^{*}_2|^{2} b_3d\bm{\omega} dv_{1}dv_{2}\bigg)^{\frac{1}{2}} |\mu^{\frac{1}{32}} \eta|  dv\\ 
	   	   	\lesssim&\bigg( \int_{ \mathbb{S}^{5} \times \mathbb{R}^9} |(\mu^{\frac{1}{32}}g)^{*}(\mu^{\frac{1}{32}}h)^{*}_1(\mu^{\frac{1}{32}}\xi)^{*}_2|^{2} b_3d\bm{\omega} dv^{*}_{1}dv^{*}_{2}dv^{*}\bigg)^{\frac{1}{2}}\|\mu^{\frac{1}{32}}\eta\|_{L^{2}_{v}}\\
	   	   	\lesssim& \|\mu^{\frac{1}{32}}g\|_{L^{2}_{v}}\|\mu^{\frac{1}{32}}h\|_{L^{2}_{v}}\|\mu^{\frac{1}{32}}\xi\|_{L^{2}_{v}}\|\mu^{\frac{1}{32}}\eta\|_{L^{2}_{v}}.
	   	   \end{align*} 
	   	   \normalsize  
	   	   In the region $\left\{ |\bm{y}| \le \frac{|\bm{v}|}{2} \right\}$, by energy conservation in \eqref{t conservation}, it follows that
	   	   $$ |v|^{2}=2|\bm{v}|^{2} \gtrsim|v|^2+ |\bm{y}|^2=|v|^2 + |v_1|^2 + |v_2|^2= |v^{*}|^2 + |v^{*}_1|^2+ |v^{*}_2|^2, $$
	   	   from which we derive that
	   	   $$ \langle v \rangle^{\gamma_{3}} \lesssim \min \{ \langle v^{*} \rangle^{\gamma_{3}}, \langle v^{*}_{1} \rangle^{\gamma_{3}},  \langle v^{*}_{2} \rangle^{\gamma_{3}}\}. $$  
	   	   Therefore, we conclude that 
	   	    \small
	   	    \begin{align*}
	   	    	&\int_{\mathbb{S}^{5} \times \mathbb{R}^{9}} \mathbf{1}_{\left\{ |\bm{y}| \le \frac{|\bm{v}|}{2} \right\}}\sqrt{\mu_{1}\mu_2}  |\tilde{\bm{u}}|^{\gamma_3} \, b_3\left( \bar{\bm{u}} \cdot \bm{\omega}, \omega_1 \cdot \omega_2 \right) |g^{*} h^{*}_1 \xi^{*}_2 \eta| d\bm{\omega} dv_{1}dv_{2}dv\\ 
	   	    	\lesssim&\int_{ \mathbb{R}^3} \bigg( \int_{ \mathbb{S}^{5} \times \mathbb{R}^6} {\mu_{1}}\mu_2 |\bm{u}|^{2\gamma_{3}}\, b_3d\bm{\omega} dv_{1}dv_{2}\bigg)^{\frac{1}{2}}\\
	   	    	&\quad\quad\times \bigg( \int_{ \mathbb{S}^{5} \times \mathbb{R}^6} \mathbf{1}_{\left\{ |\bm{y}| \le \frac{|\bm{v}|}{2} \right\}}|g^{*}h^{*}_1\xi^{*}_2|^{2} b_3d\bm{\omega} dv_{1}dv_{2}\bigg)^{\frac{1}{2}} | \eta|  dv\\
	   	    	\lesssim&\int_{ \mathbb{R}^3} \langle v\rangle^{\gamma_{3}} \bigg( \int_{ \mathbb{S}^{5} \times \mathbb{R}^6}\mathbf{1}_{\left\{ |\bm{y}| \le \frac{|\bm{v}|}{2} \right\}} |g^{*}h^{*}_1\xi^{*}_2|^{2} b_3d\bm{\omega} dv_{1}dv_{2}\bigg)^{\frac{1}{2}} | \eta|  dv\\
	   	    	\lesssim&  \bigg( \int_{ \mathbb{S}^{5} \times \mathbb{R}^9} \langle v\rangle^{\gamma_{3}}\mathbf{1}_{\left\{ |\bm{y}| \le \frac{|\bm{v}|}{2} \right\}}|g^{*}h^{*}_1\xi^{*}_2|^{2} b_3d\bm{\omega} dv_{1}dv_{2}dv\bigg)^{\frac{1}{2}}\|\eta\|_{L^{2}_{\gamma_{3}}}\\
	   	    	\lesssim&\bigg( \int_{ \mathbb{S}^{5} \times \mathbb{R}^9} \min \{ \langle v^{*} \rangle^{\gamma_{3}}, \langle v^{*}_{1} \rangle^{\gamma_{3}},  \langle v^{*}_{2} \rangle^{\gamma_{3}}\}|g^{*}h^{*}_1\xi^{*}_2|^{2} b_3d\bm{\omega} dv^{*}_{1}dv^{*}_{2}dv^{*}\bigg)^{\frac{1}{2}}\|\eta\|_{L^{2}_{\gamma_{3}}}\\
	   	    	\lesssim& \min \{ \|g\|_{L^{2}_{v}}\|\xi\|_{L^{2}_{v}}\|h\|_{L^{2}_{\gamma_{3}}} ,\|h\|_{L^{2}_{v}}\|\xi\|_{L^{2}_{v}}\|g\|_{L^{2}_{\gamma_{3}}},\|g\|_{L^{2}_{v}}\|h\|_{L^{2}_{v}}\|\xi\|_{L^{2}_{\gamma_{3}}}\} \|\eta\|_{L^{2}_{\gamma_{3}}}.
	   	    \end{align*} 
	   	    \normalsize 
	   	    
	   	   Hence, the lemma is proved. 
	   \end{proof}

		\begin{corollary}\label{Estimates of Gamma T mu}
			Let \( \gamma_{2} \in (-\frac{3}{2},1] \) and \( \gamma_{3} \in (-3,1] \). For a function \( \phi \) satisfying \( |\phi| \lesssim \mu^{\rho} \) for some \( \rho > 0 \), there exists a \( \delta > 0 \) such that
			$$ |\langle \Gamma_{B,\pm}(g, h), \phi \rangle| \lesssim \|\mu^{\delta} g\|_{L^{2}_{v}} \|\mu^{\delta} h\|_{L^{2}_{v}} , $$
			and
			$$ |\langle T_{\pm}(g, h, \xi), \phi \rangle| \lesssim \|\mu^{\delta} g\|_{L^{2}_{v}} \|\mu^{\delta} h\|_{L^{2}_{v}} \|\mu^{\delta} \xi\|_{L^{2}_{v}} . $$
		\end{corollary}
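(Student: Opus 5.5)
We follow the splitting already used in the proofs of Lemma \ref{Estimates of GammaB} and Lemma \ref{Estimates of T}. The only change is that the test function now carries Gaussian decay $|\phi|\lesssim\mu^{\rho}$, and we move that decay onto $g,h,\xi$ through the conservation identities \eqref{b conservation mu} and \eqref{t conservation mu}. Throughout, $\delta$ is taken small compared with $\rho$; for instance $\delta=\min\{\rho,1\}/32$ works.

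\emph{Loss terms.} For $\Gamma_{B,-}$ we bound
$$|\langle \Gamma_{B,-}(g,h),\phi\rangle|\le \int \sqrt{\mu_1}\,\mu^{\rho}|u|^{\gamma_2}b_2|g||h_1|\,d\omega\, dv_1 dv.$$
We write $\sqrt{\mu_1}\mu^{\rho}\le (\mu\mu_1)^{2\delta}\,\mu_1^{1/4}\mu^{\rho/2}$, using $\delta\le \rho/4$. Cauchy--Schwarz in $(v,v_1)$ then gives
$$\|\mu^\delta g\|_{L^2_v}\,\|\mu^\delta h\|_{L^2_v}\Big(\int \mu_1^{1/2}\mu^{\rho}|u|^{2\gamma_2}\,dv_1dv\Big)^{1/2}.$$
The last integral is finite because $2\gamma_2>-3$; this is Lemma \ref{Convolution estimates}. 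The ternary loss term $T_-$ is handled the same way. Here we use \eqref{tildeu-u} and \eqref{cutoff b3} to reduce the kernel to $|\bm u|^{\gamma_3}$. Integrability of $|\bm u|^{2\gamma_3}$ over $\mathbb{R}^6$ against Gaussians needs $2\gamma_3>-6$, which holds since $\gamma_3>-3$.

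\emph{Gain terms.} For $\Gamma_{B,+}$ the weight $\sqrt{\mu_1}\mu^{\rho}$ dominates $(\mu\mu_1)^{\min\{\rho,1/2\}}$. By \eqref{b conservation mu},
$$(\mu\mu_1)^{\min\{\rho,1/2\}} = (\mu\mu_1)^{c}(\mu'\mu_1')^{c}$$
for a suitable $c>0$. We keep part of the $(\mu\mu_1)$ factor to absorb the singular factor $|u|^{\gamma_2}$. We then apply Cauchy--Schwarz, splitting the integrand as
$$\big[(\mu\mu_1)^{c/2}|u|^{\gamma_2}b_2^{1/2}\big]\cdot\big[b_2^{1/2}(\mu^{c}g)'(\mu^{c}h)_1'\big].$$
The first factor squared is integrable over $\mathbb{R}^6\times\mathbb{S}^2$, again since $2\gamma_2>-3$. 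For the second factor we apply the measure-preserving involution $T_\omega$, which yields $\|\mu^{c}g\|_{L^2_v}\|\mu^{c}h\|_{L^2_v}$ times $b^{1/2}$. This is exactly the first-region argument in Lemma \ref{Estimates of GammaB}, except that no splitting into $|v_1|\gtrless|v|/2$ is needed, because $\phi$ already supplies decay in $v$.

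For $T_+$ we repeat the argument with \eqref{t conservation mu}, the involutions $T_{1,\bm\omega}$ and $T_{2,\bm\omega}$, and \eqref{cutoff b3}. The adjacent term with $B_3(\bm u_1,\bm\omega)$ is treated identically.

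\emph{Main obstacle.} The delicate point is the singular kernel in the gain terms. Cauchy--Schwarz must be arranged so that $|u|^{2\gamma_2}$ (respectively $|\bm u|^{2\gamma_3}$) is paired with the Gaussian factor in the pre-collision variables. The post-collision factor must contain only $b_2$ or $b_3$ and the functions, so that the change of variables is harmless. The pre-collision integral $\int(\mu\mu_1)^{c}|u|^{2\gamma_2}\,dv\,dv_1$ needs only local integrability of $|u|^{2\gamma_2}$ in $\mathbb{R}^3$. This is precisely the constraint $\gamma_2>-3/2$, and similarly $\gamma_3>-3$ in $\mathbb{R}^6$. Using $\sup_{\bm\nu}\int b_3\,d\bm\omega<\infty$ before the change of variables, we may integrate the angular variable first.
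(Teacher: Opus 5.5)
Your proposal is correct and follows essentially the paper's route. You bound $\sqrt{\mu_1}\mu^{\rho}$ (resp.\ $\sqrt{\mu_1\mu_2}\mu^{\rho}$) by a small power of the full pre-collision Maxwellian product, split it via \eqref{b conservation mu}/\eqref{t conservation mu} into pre- and post-collision factors, and run the first-region Cauchy--Schwarz argument of Lemmas \ref{Estimates of GammaB} and \ref{Estimates of T}, pairing the singular kernel with the pre-collision Gaussian (which is exactly where $\gamma_2>-\tfrac32$ and $\gamma_3>-3$ enter).

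The only substantive difference is the choice of $\delta$: the paper assumes $\rho<\tfrac12$ and takes $\delta=\rho/4$. Two wording slips should be fixed. First, the weight is \emph{dominated by} $(\mu\mu_1)^{\min\{\rho,1/2\}}$; it does not dominate it. Second, in the post-collision factor the angular integration must come after the $\bm{\omega}$-dependent change of variables, using micro-reversibility of $b_3$, not before.
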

	     \begin{proof} Without loss of generality, we may assume $\rho < \tfrac12$.  Then, from \eqref{b conservation mu} and \eqref{t conservation mu}, by taking $\delta = \tfrac{\rho}{4}$, it is straightforward to see that 
	     	\small
	     	\begin{align*}
	     		|\langle\Gamma_{B,-}(g, h),\phi \rangle| \lesssim&\int_{\mathbb{S}^{2} \times \mathbb{R}^6} \sqrt{\mu_{1}}\mu^{\rho} |u|^{\gamma_{2}}\, b_2 \, |g h_1 | \, d\omega dv_1dv  \\
	     		\lesssim&\int_{\mathbb{S}^{2} \times \mathbb{R}^6} (\mu\mu_{1})^{2\delta } |u|^{\gamma_{2}}\, b_2 \, |(\mu^{2\delta }g) (\mu^{2\delta }h)_1 | \, d\omega dv_1dv ,
	     	\end{align*}
	     	\normalsize 
	     	\small
	     	\begin{align*}
	     		|\langle\Gamma_{B,+}(g, h),\phi \rangle| \lesssim&\int_{\mathbb{S}^{2} \times \mathbb{R}^6} \sqrt{\mu_{1}}\mu^{\rho} |u|^{\gamma_{2}}\, b_2 \, |g' h'_1 | \, d\omega dv_1dv  \\
	     		\lesssim&\int_{\mathbb{S}^{2} \times \mathbb{R}^6} (\mu\mu_{1})^{2\delta } |u|^{\gamma_{2}}\, b_2 \, |(\mu^{2\delta }g)' (\mu^{2\delta }h)'_1 | \, d\omega dv_1dv ,
	     	\end{align*}
	     	\normalsize 
	     	\small
	     	\begin{align*}
	     		&|\langle T_{-}(g, h,\xi), \phi\rangle| \\
	     		\lesssim&\int_{\mathbb{S}^{5} \times \mathbb{R}^{9}} \sqrt{\mu_{1}\mu_2}\mu^{\rho}  |\tilde{\bm{u}}|^{\gamma_3} \, b_3\left( \bar{\bm{u}} \cdot \bm{\omega}, \omega_1 \cdot \omega_2 \right) |g h_1 \xi_2| d\bm{\omega} dv_{1}dv_{2}dv\\
	     		&+2\int_{\mathbb{S}^{5} \times \mathbb{R}^{9}} \sqrt{\mu_{1}\mu_2}\mu^{\rho}  |\tilde{\bm{u}}|^{\gamma_3} \, b_3\left( \bar{\bm{u}}_{1} \cdot \bm{\omega}, \omega_1 \cdot \omega_2 \right) |g h_1 \xi_2 | d\bm{\omega} dv_{1}dv_{2}dv\\
	     		\lesssim&\int_{\mathbb{S}^{5} \times \mathbb{R}^{9}} (\mu\mu_{1}\mu_{2})^{2\delta }   |\tilde{\bm{u}}|^{\gamma_3} \, b_3\left( \bar{\bm{u}} \cdot \bm{\omega}, \omega_1 \cdot \omega_2 \right) |(\mu^{2\delta }g) (\mu^{2\delta }h)_1 (\mu^{2\delta }\xi)_2| d\bm{\omega} dv_{1}dv_{2}dv\\
	     		&+2\int_{\mathbb{S}^{5} \times \mathbb{R}^{9}} (\mu\mu_{1}\mu_{2})^{2\delta }  |\tilde{\bm{u}}|^{\gamma_3} \, b_3\left( \bar{\bm{u}}_{1} \cdot \bm{\omega}, \omega_1 \cdot \omega_2 \right) |(\mu^{2\delta }g) (\mu^{2\delta }h)_1 (\mu^{2\delta }\xi)_2|d\bm{\omega} dv_{1}dv_{2}dv,
	     	\end{align*} 
	     	\normalsize
	     	and
	     	\small
	     	\begin{align*}
	     		&|\langle T_{+}(g, h,\xi), \phi\rangle| \\ 
	     		\lesssim&\int_{\mathbb{S}^{5} \times \mathbb{R}^{9}} \sqrt{\mu_{1}\mu_2}\mu^{\rho}  |\tilde{\bm{u}}|^{\gamma_3} \, b_3\left( \bar{\bm{u}} \cdot \bm{\omega}, \omega_1 \cdot \omega_2 \right) |g^{*} h^{*}_1 \xi^{*}_2 | d\bm{\omega} dv_{1}dv_{2}dv\\
	     		&+2\int_{\mathbb{S}^{5} \times \mathbb{R}^{9}} \sqrt{\mu_{1}\mu_2}\mu^{\rho}  |\tilde{\bm{u}}|^{\gamma_3} \, b_3\left( \bar{\bm{u}}_{1} \cdot \bm{\omega}, \omega_1 \cdot \omega_2 \right) |g^{1*} h^{1*}_1 \xi^{1*}_2 |d\bm{\omega} dv_{1}dv_{2}dv\\
	     		\lesssim&\int_{\mathbb{S}^{5} \times \mathbb{R}^{9}} (\mu\mu_{1}\mu_{2})^{2\delta }   |\tilde{\bm{u}}|^{\gamma_3} \, b_3\left( \bar{\bm{u}} \cdot \bm{\omega}, \omega_1 \cdot \omega_2 \right) |(\mu^{2\delta }g)^{*}  (\mu^{2\delta }h)^{*} _1 (\mu^{2\delta }\xi)^{*} _2| d\bm{\omega} dv_{1}dv_{2}dv\\
	     		&+2\int_{\mathbb{S}^{5} \times \mathbb{R}^{9}} (\mu\mu_{1}\mu_{2})^{2\delta }  |\tilde{\bm{u}}|^{\gamma_3} \, b_3\left( \bar{\bm{u}}_{1} \cdot \bm{\omega}, \omega_1 \cdot \omega_2 \right) |(\mu^{2\delta }g)^{1*} (\mu^{2\delta }h)^{1*}_1 (\mu^{2\delta }\xi)^{1*}_2|d\bm{\omega} dv_{1}dv_{2}dv.
	     	\end{align*}  
	     	\normalsize 
	     	Furthermore, by following the proof steps of Lemma \ref{Estimates of GammaB} and Lemma \ref{Estimates of T}, the corollary is established. 
	     \end{proof}
		
		\begin{corollary}\label{Estimates of GammaT}
			For \( \gamma_3 \in (-3, 1] \), the following bound holds:  
			$$ |\langle\Gamma_{T}(g, h),\eta \rangle|\lesssim \big(\|g\|_{L^{2}_{v}}\|h\|_{L^{2}_{\gamma_{3}}}+\|h\|_{L^{2}_{v}}\|g\|_{L^{2}_{\gamma_{3}}}\big)\|\eta\|_{L^{2}_{\gamma_{3}}}.$$
		\end{corollary}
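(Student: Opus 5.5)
The plan is to reduce Corollary \ref{Estimates of GammaT} directly to Lemma \ref{Estimates of T}. By definition \eqref{Gamma_{{T}}(g,h)},
\[
\Gamma_T(g,h)=T(g,h,\sqrt{\mu})+T(g,\sqrt{\mu},h)+T(\sqrt{\mu},g,h),
\]
and each $T=T_+-T_-$. So $|\langle\Gamma_T(g,h),\eta\rangle|$ is bounded by a sum of six terms of the form $|\langle T_\pm(\cdot,\cdot,\cdot),\eta\rangle|$. In each of these, exactly one slot is occupied by $\sqrt{\mu}$.

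Next I apply the two estimates of Lemma \ref{Estimates of T}, valid for $\gamma_3\in(-3,1]$, to each term. They hold for any triple of functions, so they apply with $\sqrt{\mu}$ placed in any slot. I then use
\[
\|\sqrt{\mu}\|_{L^2_v}+\|\sqrt{\mu}\|_{L^2_{\gamma_3}}\lesssim 1,
\]
which is finite for every $\gamma_3$ thanks to the Gaussian decay.

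For the $T_+$ terms, the right-hand side of Lemma \ref{Estimates of T} is a sum of three products. In each product, one of the three functions carries the weighted norm $L^2_{\gamma_3}$ and the other two carry $L^2_v$. When $\sqrt{\mu}$ carries the weighted norm, the product becomes $\|g\|_{L^2_v}\|h\|_{L^2_v}$. Otherwise it becomes $\|g\|_{L^2_v}\|h\|_{L^2_{\gamma_3}}$ or $\|h\|_{L^2_v}\|g\|_{L^2_{\gamma_3}}$, which are the desired terms. The $T_-$ terms give analogous products.

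The only point needing care is the leftover $\|g\|_{L^2_v}\|h\|_{L^2_v}$, which does not directly appear on the right-hand side of the corollary. When $\gamma_3\ge0$, we have $\|h\|_{L^2_v}\le\|h\|_{L^2_{\gamma_3}}$, so this product is absorbed into $\|g\|_{L^2_v}\|h\|_{L^2_{\gamma_3}}$. When $\gamma_3<0$ that inequality reverses, so I cannot absorb the product this way. I would instead return to the proof of Lemma \ref{Estimates of T}, using the version in which the weight sits on $\sqrt{\mu}$.

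\begin{itemize}
\item In the region $\{|\bm y|\ge|\bm v|/2\}$, that proof already gives a bound by Maxwellian-weighted norms. These are controlled by any $L^2_{\gamma_3}$ norm.
\item In the complementary region, it yields a \emph{minimum} over the three choices of which function carries the weight. I simply choose the choice in which $g$ or $h$ carries the weight, rather than $\sqrt{\mu}$.
\item For $T_-$, the weight in Lemma \ref{Estimates of T} falls on the first argument. When $\sqrt{\mu}$ sits in the first slot (the term $T_-(\sqrt{\mu},g,h)$), I redo the Cauchy--Schwarz step of that proof. I put the $\langle v\rangle^{\gamma_3}$ factor from the convolution estimate onto $\eta$, keep $\sqrt{\mu}$ in $L^2_v$, and use $\|h\|_{L^2_v}\|g\|_{L^2_v}$. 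This needs only a bound on $\int \mu^{1/2}\langle v\rangle^{\gamma_3}\cdots$, where the Gaussian in $v$ allows any weight to be moved onto $g$ freely.
\end{itemize}

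Thus the main obstacle is this weight-placement bookkeeping in the soft case $\gamma_3<0$. It is resolved by the minimum structure already obtained in the proof of Lemma \ref{Estimates of T}, together with the Gaussian decay of $\sqrt{\mu}$.

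Collecting all six terms then gives
\[
|\langle\Gamma_T(g,h),\eta\rangle|\lesssim\big(\|g\|_{L^2_v}\|h\|_{L^2_{\gamma_3}}+\|h\|_{L^2_v}\|g\|_{L^2_{\gamma_3}}\big)\|\eta\|_{L^2_{\gamma_3}}.
\]
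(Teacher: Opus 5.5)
Your reduction to the six terms $T_\pm$ matches the paper's proof, and so do the following steps: absorbing $\|g\|_{L^2_v}\|h\|_{L^2_v}$ when $\gamma_3\ge 0$, and, for $\gamma_3<0$, handling the three $T_+$ terms together with $T_-(g,h,\sqrt{\mu})$ and $T_-(g,\sqrt{\mu},h)$ (Maxwellian-weighted bound on $\{|\bm y|\ge|\bm v|/2\}$, favourable entry of the minimum on the complement).

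The gap is in the term you single out as the crux, $T_-(\sqrt{\mu},g,h)$ with $\gamma_3\in(-3,0)$. Your procedure runs Cauchy--Schwarz as in Lemma~\ref{Estimates of T}, bounds $\big(\int|g_1h_2|^2b_3\,d\bm\omega\,dv_1dv_2\big)^{1/2}$ by $\|g\|_{L^2_v}\|h\|_{L^2_v}$, and then uses $\int\langle v\rangle^{\gamma_3}\sqrt{\mu}\,|\eta|\,dv\lesssim\|\eta\|_{L^2_{\gamma_3}}$. It therefore ends in $\|g\|_{L^2_v}\|h\|_{L^2_v}\|\eta\|_{L^2_{\gamma_3}}$, which is exactly the product you showed cannot be absorbed when $\gamma_3<0$. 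Your justification that ``the Gaussian in $v$ allows any weight to be moved onto $g$'' does not hold. The factor $\sqrt{\mu(v)}$ depends only on the variable of $\eta$, while $g$ is evaluated at $v_1$. Once $g_1h_2$ has been estimated in unweighted $L^2$, no decay in $v_1$ or $v_2$ remains to convert into a weight.

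The missing idea is to use the Gaussians $\sqrt{\mu_1\mu_2}$ built into the kernel of $T_-$, keeping part of them with $g_1h_2$ in the Cauchy--Schwarz split. For each fixed $v$,
\[
\int_{\mathbb{S}^5\times\mathbb{R}^6}\sqrt{\mu_1\mu_2}\,|\tilde{\bm u}|^{\gamma_3}b_3\,|g_1h_2|
\le\Big(\int(\mu_1\mu_2)^{\frac12}|\bm u|^{2\gamma_3}b_3\Big)^{\frac12}\Big(\int(\mu_1\mu_2)^{\frac12}|g_1h_2|^2b_3\Big)^{\frac12}
\lesssim\langle v\rangle^{\gamma_3}\,\|\mu^{\frac14}g\|_{L^2_v}\|\mu^{\frac14}h\|_{L^2_v}.
\]
This uses $|\bm u|\le|\tilde{\bm u}|$ with $\gamma_3<0$, and Lemma~\ref{Convolution estimates} with $d=6$, $q=2\gamma_3>-6$. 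Since $\mu^{1/4}\lesssim\langle v\rangle^{\gamma_3/2}$, the term is bounded by $\|g\|_{L^2_v}\|h\|_{L^2_{\gamma_3}}\|\eta\|_{L^2_{\gamma_3}}$; the adjacent kernel $B_3(\bm u_1,\bm\omega)$ is handled identically.

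The paper reaches the same bound differently. It uses the identity $\langle T_-(\sqrt{\mu},g,h),\eta\rangle=\langle T_-(\eta,g,h),\sqrt{\mu}\rangle$ and the argument of Corollary~\ref{Estimates of Gamma T mu}, obtaining $\|\mu^{1/8}g\|_{L^2_v}\|\mu^{1/8}h\|_{L^2_v}\|\mu^{1/8}\eta\|_{L^2_v}$. With this repair your proof is complete.
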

		\begin{proof} 
			From the expression of $\Gamma_{T}(g, h)$ in  \eqref{Gamma_{{T}}(g,h)}, we observe that
			\begin{align*} 
				\Gamma_{{T}}(g,h) = &T_{+}(g, h, \sqrt{\mu}) + T_{+}(g, \sqrt{\mu}, h) + T_{+}(\sqrt{\mu}, g, h)\\
				&-T_{-}(g, h, \sqrt{\mu}) - T_{-}(g, \sqrt{\mu}, h) - T_{-}(\sqrt{\mu}, g, h).
			\end{align*} 
			
			\noindent{(i)} Estimate for $ T_{+}(g, h, \sqrt{\mu}), \ T_{+}(g, \sqrt{\mu}, h)$ and $ T_{+}(\sqrt{\mu}, g, h). $
			
			By adhering to the methodology outlined in the proof of Lemma \ref{Estimates of T}, we derive the subsequent estimates.
			When $\gamma_3 \in [0, 1]$, we have 
			\begin{align*} 
				&|\langle T_{+}(g, h, \sqrt{\mu}),\eta\rangle|+|\langle T_{+}(g, \sqrt{\mu}, h),\eta\rangle|+|\langle T_{+}(\sqrt{\mu}, g, h),\eta\rangle|\\[1mm]
				\lesssim& \big(\|g\|_{L^{2}_{v}}\|h\|_{L^{2}_{\gamma_{3}}} +\|h\|_{L^{2}_{v}}\|g\|_{L^{2}_{\gamma_{3}}}+\|g\|_{L^{2}_{v}}\|h\|_{L^{2}_{v}}\big)\|\eta\|_{L^{2}_{\gamma_{3}}}\\[1mm]
				\lesssim&\big(\|g\|_{L^{2}_{v}}\|h\|_{L^{2}_{\gamma_{3}}} +\|h\|_{L^{2}_{v}}\|g\|_{L^{2}_{\gamma_{3}}}\big)\|\eta\|_{L^{2}_{\gamma_{3}}},
			\end{align*} 
			whereas for $\gamma_3 \in (-3, 0)$, we obtain 
			\begin{align*} 
				&|\langle T_{+}(g, h, \sqrt{\mu}),\eta\rangle|+|\langle T_{+}(g, \sqrt{\mu}, h),\eta\rangle|+|\langle T_{+}(\sqrt{\mu}, g, h),\eta\rangle|\\[1mm]
				\lesssim& \min \{ \|g\|_{L^{2}_{v}}\|h\|_{L^{2}_{\gamma_{3}}} ,\|h\|_{L^{2}_{v}}\|g\|_{L^{2}_{\gamma_{3}}},\|g\|_{L^{2}_{v}}\|h\|_{L^{2}_{v}}\} \|\eta\|_{L^{2}_{\gamma_{3}}}\\[1mm]
				&+\|\mu^{\frac{1}{32}}g\|_{L^{2}_{v}}\|\mu^{\frac{1}{32}}h\|_{L^{2}_{v}}\|\mu^{\frac{1}{32}}\eta\|_{L^{2}_{v}} \\[1mm]
				\lesssim&\|g\|_{L^{2}_{v}}\|h\|_{L^{2}_{\gamma_{3}}}\|\eta\|_{L^{2}_{\gamma_{3}}}.
			\end{align*} 
			
			\noindent{(ii)} Estimate for $ T_{-}(g, h, \sqrt{\mu})$ and $T_{-}(g, \sqrt{\mu}, h). $
			
			From Lemma \ref{Estimates of T}, it is evident that the estimate for $|\langle T_{-}(g, h, \xi), \eta \rangle|$ involves only the $L^{2}_{\gamma_{3}}$ norms of the functions $g$ and $\eta.$ 
			Specifically, we have
			$$
			|\langle T_{-}(g, h, \xi), \eta \rangle| \lesssim \|h\|_{L^{2}_{v}} \|\xi\|_{L^{2}_{v}} \|g\|_{L^{2}_{\gamma_{3}}} \|\eta\|_{L^{2}_{\gamma_{3}}}.
			$$ 
			Consequently, we derive that
			$$
			|\langle T_{-}(g, h, \sqrt{\mu}), \eta \rangle|+ |\langle T_{-}(g, \sqrt{\mu}, h), \eta \rangle|\lesssim \|h\|_{L^{2}_{v}}  \|g\|_{L^{2}_{\gamma_{3}}} \|\eta\|_{L^{2}_{\gamma_{3}}}.
			$$ 
			
			\noindent{(iii)} Estimate for $T_{-}(\sqrt{\mu},g, h). $	
			
			By analogy with the proof of Corollary \ref{Estimates of Gamma T mu}, we can similarly deduce that
			$$
			|\langle T_{-}(\sqrt{\mu},g, h), \eta \rangle|=|\langle T_{-}(\eta ,g, h), \sqrt{\mu} \rangle| \lesssim\|\mu^{\frac{1}{8}} g\|_{L^{2}_{v}} \|\mu^{\frac{1}{8}} h\|_{L^{2}_{v}} \|\mu^{\frac{1}{8}} \eta\|_{L^{2}_{v}}.
			$$ 
			
			Consequently, the corollary follows from the aforementioned estimates.
			
		\end{proof}

		In the following corollary, we establish weighted \(L^2_v\) estimates for certain linear operators. For hard potentials, the weighted estimates below do not establish the boundedness of $K_T$ on the unweighted space $L^2_v$.
		
\begin{corollary}\label{Estimates of LK}
			Let \( \gamma_{2} \in (-\frac{3}{2},1] \) and \( \gamma_{3} \in (-3,1] \). We have
			$$ |\langle L_{B}g,h\rangle|+|\langle K_{B}g,h\rangle|\lesssim  \|g\|_{L^{2}_{\gamma_{2}}}\|h\|_{L^{2}_{\gamma_{2}}},$$
			and
			$$ |\langle L_{T}g,h\rangle|+|\langle K_{T}g,h\rangle|\lesssim  \|g\|_{L^{2}_{\gamma_{3}}}\|h\|_{L^{2}_{\gamma_{3}}}.$$
			Moreover, for the operator \( K_B \) we have the estimate
			\[
			|\langle K_B g, h \rangle| \lesssim \|g\|_{L^2_{v}}\|h\|_{L^2_{v}}.
			\]
		\end{corollary}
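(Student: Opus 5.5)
The plan is to write each linear operator in terms of the bilinear and trilinear operators whose bounds we already have, and then read off the weighted estimates from Lemma \ref{Estimates of GammaB}, Lemma \ref{Estimates of T} and Corollary \ref{Estimates of GammaT}.

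\textbf{Binary part.} By \eqref{eq:LB_decomp},
\[
L_B g=-\Gamma_B(\sqrt\mu,g)-\Gamma_B(g,\sqrt\mu).
\]
Split each term into its gain and loss parts and apply Lemma \ref{Estimates of GammaB} with one argument equal to $\sqrt\mu$. Since $\|\sqrt\mu\|_{L^2_v}+\|\sqrt\mu\|_{L^2_{\gamma_2}}\lesssim1$, every term is bounded by $\|g\|_{L^2_{\gamma_2}}\|h\|_{L^2_{\gamma_2}}$ except one: the term in which $g$ carries only the $L^2_v$ norm and the $\sqrt\mu$ slot carries the weight. For that term, and more generally, I would avoid the weak version of the estimate. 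The proof of Lemma \ref{Estimates of GammaB} actually yields the $\min$ form for $\Gamma_{B,+}$, so I can always put the weight on $g$. In the hard case $\gamma_2\ge0$ we have $\|g\|_{L^2_v}\le\|g\|_{L^2_{\gamma_2}}$ in any event.

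For the soft case there is a cleaner route: $\Gamma_{B,-}(\sqrt\mu,g)$ has the $\mu$-decaying factor $\sqrt\mu$ in the first slot. Pairing with $h$ and using the weight $\sqrt{\mu}$, as in Corollary \ref{Estimates of Gamma T mu}, gives a bound $\|\mu^\delta g\|_{L^2_v}\|\mu^\delta h\|_{L^2_v}$, which is dominated by any weighted norm. For the nu-part, $\nu_B\sim\langle v\rangle^{\gamma_2}$ by \eqref{nuB}, so $|\langle\nu_Bg,h\rangle|\lesssim\|g\|_{L^2_{\gamma_2}}\|h\|_{L^2_{\gamma_2}}$ directly. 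Then $K_B=\nu_B-L_B$ gives the same bound for $K_B$.

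\textbf{Ternary part.} This is identical: $L_T g=-\Gamma_T(g,\sqrt\mu)$ up to a symmetric rearrangement. More precisely, $L_T g$ is the sum of the three $T(\cdot,\cdot,\cdot)$ terms with two slots equal to $\sqrt\mu$. Lemma \ref{Estimates of T} handles the $T_-$ terms. For $T_-(g,\sqrt\mu,\sqrt\mu)$ we get $\|g\|_{L^2_{\gamma_3}}\|h\|_{L^2_{\gamma_3}}$. For $T_-(\sqrt\mu,g,\sqrt\mu)$ and $T_-(\sqrt\mu,\sqrt\mu,g)$, use $\langle T_-(\sqrt\mu,g,\xi),h\rangle=\langle T_-(h,g,\xi),\sqrt\mu\rangle$, which Corollary \ref{Estimates of Gamma T mu} bounds by Gaussian-weighted norms. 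For the $T_+$ terms, use the $\min$ form from the proof of Lemma \ref{Estimates of T} in the soft case, and the sum form in the hard case, where $L^2_v\subset L^2_{\gamma_3}$ is harmless. By \eqref{nu3}, $\nu_T\lesssim\langle v\rangle^{\gamma_3}$, so $K_T=\nu_T-L_T$ inherits the bound.

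\textbf{The unweighted $K_B$ bound.} This is the real content and the main obstacle. The weighted statements above lose $\langle v\rangle^{\gamma_2/2}$ on each side, which is unacceptable for $\gamma_2>0$. Here I would write
\[
K_B g=\Gamma_{B,+}(\sqrt\mu,g)+\Gamma_{B,+}(g,\sqrt\mu)-\Gamma_{B,-}(\sqrt\mu,g).
\]
\begin{itemize}
\item The loss term $\Gamma_{B,-}(\sqrt\mu,g)=\sqrt\mu\int B_2\sqrt{\mu_1}g_1$ is a Hilbert--Schmidt operator. Its kernel is $\sqrt{\mu(v)\mu(v_1)}\,|v-v_1|^{\gamma_2}$, which is in $L^2(\mathbb R^6)$ since $2\gamma_2>-3$. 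That is exactly where $\gamma_2>-\frac32$ enters.
\item For the gain terms I would pair with $h$ and exploit that $\sqrt\mu$ sits in a post-collisional slot. By the change of variables $T_\omega$ and \eqref{b conservation mu}, $\sqrt{\mu_1}\,\sqrt{\mu'}$ produces Gaussian decay in a combination of $(v,v_1)$, and with Cauchy--Schwarz, splitting $|u|^{\gamma_2}$ between the two factors, I expect bounds by $\|g\|_{L^2_v}\|h\|_{L^2_v}$.
\end{itemize}
I would treat the $\omega$-integral by Cauchy--Schwarz, as in the proof of Lemma \ref{Estimates of GammaB}, placing $\sqrt{\mu_1}|u|^{\gamma_2}$ factors on whichever side carries a Gaussian. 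If this direct splitting fails for large $|u|$ in the hard-potential case, the fallback is to cite the classical Grad/Hilbert--Schmidt representation of $K_B$, with kernel bounded by $|v-v_1|^{-1}e^{-c|v-v_1|^2-c(|v|^2-|v_1|^2)^2/|v-v_1|^2}$ (see \cite{Glassey1996,MR2013332}). Its rows and columns are uniformly integrable, so Schur's test gives $L^2_v$ boundedness.
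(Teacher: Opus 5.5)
For the weighted bounds your argument is the paper's. You split $L_B$ and $L_T$ into gain and loss pieces with $\sqrt\mu$ in the remaining slots, and use the $\min$ form contained in the proofs of Lemmas~\ref{Estimates of GammaB} and~\ref{Estimates of T} to put the weight on $g$. You move $\sqrt\mu$ into the test slot (e.g.\ $\langle T_-(\sqrt\mu,g,\xi),h\rangle=\langle T_-(h,g,\xi),\sqrt\mu\rangle$) so that Corollary~\ref{Estimates of Gamma T mu} applies. Finally you recover $K_B$ and $K_T$ from $\nu_B\sim\langle v\rangle^{\gamma_2}$ and $\nu_T\lesssim\langle v\rangle^{\gamma_3}$. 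One slip: for $\gamma_3\ge0$ what you actually use is $\|\cdot\|_{L^2_v}\le\|\cdot\|_{L^2_{\gamma_3}}$, i.e.\ $L^2_{\gamma_3}\subset L^2_v$, not the reverse inclusion.

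The real difference is the unweighted bound for $K_B$. The paper gets $\gamma_2\le0$ for free from the weighted bound. For $\gamma_2\in(0,1]$ it just cites compactness of $K_B$ on $L^2_v$ from Glassey's book. You instead give a self-contained Hilbert--Schmidt argument for $\Gamma_{B,-}(\sqrt\mu,\cdot)$, which is correct for all $\gamma_2>-\tfrac32$, and you try to treat the gain terms by a Schur-type Cauchy--Schwarz.

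That plan does close when $b_2$ is bounded. Since $v'-v$ is parallel to $\omega$ and $v_1'\cdot\omega=v\cdot\omega$, one has $\sqrt{\mu(v')}\lesssim e^{-|v-(v\cdot\omega)\omega|^2/4}$ and $\sqrt{\mu(v_1')}\lesssim e^{-(v\cdot\omega)^2/4}$. These confine $\omega$ to a cap of measure $\lesssim\langle v\rangle^{-2}$, resp.\ a band of measure $\lesssim\langle v\rangle^{-1}$. Hence the row integrals of the kernels of $\Gamma_{B,+}(\sqrt\mu,\cdot)$ and $\Gamma_{B,+}(\cdot,\sqrt\mu)$ are $\lesssim\|b_2\|_{L^\infty}\langle v\rangle^{\gamma_2-2}$ and $\lesssim\|b_2\|_{L^\infty}\langle v\rangle^{\gamma_2-1}$, both bounded for $\gamma_2\le1$. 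The other Cauchy--Schwarz factor is handled the same way after the change of variables $T_\omega$. This gives an elementary proof in which the role of $\gamma_2\le1$ is visible, whereas the paper's citation hides it.

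Be aware, though, that under the bare $L^1$ cutoff \eqref{cutoff b2} neither your direct route nor the claim itself holds. Take $b_2(z)=|z|^{-1+\epsilon}$ with $0<\epsilon<\gamma_2$ and $g=\mathbf{1}_{B(v_0,1)}$. Restrict $\langle\Gamma_{B,+}(g,\sqrt\mu),g\rangle$ to $v\in B(v_0,\tfrac12)$, $|v_1|\le1$, $|u\cdot\omega|\le\tfrac12$. This already gives $\langle K_Bg,g\rangle\gtrsim|v_0|^{\gamma_2-\epsilon}\|g\|_{L^2_v}^2$ for large $|v_0|$, because the other two pieces of $K_B$ are nonnegative or exponentially small on this $g$. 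So your fallback citation and the paper's reference both tacitly assume a hard-sphere or Grad-type angular condition at this step. That is a limitation you share with the paper, not a defect specific to your proposal.
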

		\begin{proof}
			We begin with the proof concerning the operator \(L_T\). Recall that  
			\begin{align*} 
				L_T g =& T_{-}(g, \sqrt{\mu}, \sqrt{\mu}) + T_{-}(\sqrt{\mu}, g, \sqrt{\mu}) + T_{-}(\sqrt{\mu}, \sqrt{\mu}, g)\\[1mm]
				 &- T_{+}(g, \sqrt{\mu}, \sqrt{\mu}) - T_{+}(\sqrt{\mu}, g, \sqrt{\mu}) - T_{+}(\sqrt{\mu}, \sqrt{\mu}, g).
			\end{align*}  
			If \(\gamma_3 \in [0,1]\), we directly utilize the estimates from Lemma \ref{Estimates of T} to obtain  
			\[
			|\langle L_{T}g, h \rangle| \lesssim \left( \|g\|_{L^{2}_{\gamma_{3}}} + \|g\|_{L^{2}_{v}} \right) \|h\|_{L^{2}_{\gamma_{3}}} \lesssim \|g\|_{L^{2}_{\gamma_{3}}} \|h\|_{L^{2}_{\gamma_{3}}}.
			\] 
			If \(\gamma_3 \in (-3,0)\), it follows from the proof of Lemma \ref{Estimates of T} that  
			$$ |\langle T_{-}(g, h,\xi),\eta \rangle|\lesssim \|h\|_{L^{2}_{v}}\|\xi\|_{L^{2}_{v}}\|g\|_{L^{2}_{\gamma_{3}}}\|\eta\|_{L^{2}_{\gamma_{3}}},$$
			and
			\[
			\begin{aligned}
				|\langle T_{+}(g, h, \xi), \eta \rangle| \lesssim& \min \big\{  \|g\|_{L^{2}_{v}} \|\xi\|_{L^{2}_{v}} \|h\|_{L^{2}_{\gamma_{3}}},\; \|h\|_{L^{2}_{v}} \|\xi\|_{L^{2}_{v}} \|g\|_{L^{2}_{\gamma_{3}}}, \\
				&\qquad\quad \|g\|_{L^{2}_{v}} \|h\|_{L^{2}_{v}} \|\xi\|_{L^{2}_{\gamma_{3}}} \big\} \|\eta\|_{L^{2}_{\gamma_{3}}}\\
				&+\|\mu^{\frac{1}{32}}g\|_{L^{2}_{v}}\|\mu^{\frac{1}{32}}h\|_{L^{2}_{v}}\|\mu^{\frac{1}{32}}\xi\|_{L^{2}_{v}}\|\mu^{\frac{1}{32}}\eta\|_{L^{2}_{v}},
			\end{aligned}
			\]  
			from which we deduce that  
			$$|\langle T_{-}(g, \sqrt{\mu}, \sqrt{\mu}), h \rangle|\lesssim \|g\|_{L^{2}_{\gamma_{3}}} \|h\|_{L^{2}_{\gamma_{3}}}, $$
			and 
			\begin{align*}
				& |\langle T_{+}(g, \sqrt{\mu}, \sqrt{\mu}), h \rangle| + |\langle T_{+}(\sqrt{\mu}, g, \sqrt{\mu}), h \rangle| + |\langle T_{+}(\sqrt{\mu}, \sqrt{\mu}, g), h \rangle| \\
				& \qquad \lesssim \|g\|_{L^{2}_{\gamma_{3}}} \|h\|_{L^{2}_{\gamma_{3}}}.
			\end{align*} 
			The remaining two terms can be estimated using Corollary \ref{Estimates of Gamma T mu}, which yields the following bound:
			\begin{align*}
				& |\langle T_{-}(\sqrt{\mu}, g, \sqrt{\mu}), h \rangle| + |\langle T_{-}(\sqrt{\mu}, \sqrt{\mu}, g), h \rangle| \\
				=& |\langle T_{-}(h, g, \sqrt{\mu}), \sqrt{\mu} \rangle| + |\langle T_{-}(h, \sqrt{\mu}, g), \sqrt{\mu} \rangle|\\
				\lesssim&\|g\|_{L^{2}_{\gamma_{3}}} \|h\|_{L^{2}_{\gamma_{3}}}.
			\end{align*} 
			Therefore, we have  
			\[
			|\langle L_{T}g, h\rangle| \lesssim \|g\|_{L^{2}_{\gamma_{3}}} \|h\|_{L^{2}_{\gamma_{3}}}.
			\]  
			Combining \eqref{L_T} and \eqref{nu3}, we infer that  
			\[
			|\langle K_{T}g, h\rangle| \lesssim |\langle L_{T}g, h\rangle| + |\langle \nu_{T}g, h\rangle| \lesssim \|g\|_{L^{2}_{\gamma_{3}}} \|h\|_{L^{2}_{\gamma_{3}}}.
			\]
			
			Furthermore, combining \eqref{eq:LB_decomp}, \eqref{nuB}, and Lemma \ref{Estimates of GammaB}, a similar procedure yields the estimates
			\[
			|\langle L_{B}g, h\rangle| + |\langle K_{B}g, h\rangle| \lesssim \|g\|_{L^{2}_{\gamma_{2}}} \|h\|_{L^{2}_{\gamma_{2}}}.
			\]
			For the case \( \gamma_{2} \in (-\frac{3}{2}, 0] \), it is clear that
			\[
			|\langle K_{B}g, h\rangle| \lesssim \|g\|_{L^{2}_{\gamma_{2}}} \|h\|_{L^{2}_{\gamma_{2}}} \lesssim \|g\|_{L^{2}_{v}} \|h\|_{L^{2}_{v}}.
			\]
			For \( \gamma_{2} \in (0, 1] \), it can be shown that \(K_{B}\) is a compact operator on \(L^{2}_{v}\) (refer to \cite{Glassey1996}, Section 3.5), and is hence bounded on \(L^{2}_{v}\). 
		\end{proof}
		
		\section{Global Existence}\label{sec;Global Existence}
		
		In this section, we focus on the proof of the main result, Theorem \ref{main_theorem}. We first establish the local well-posedness and then extend the local solution to a global one by combining it with the macroscopic estimates.

\paragraph{4.1. Local Existence.}
		
		The proof of local well-posedness can be divided into two main steps. First, we establish the local existence, uniqueness, and continuity in time of the energy functional for the solution. Then, we demonstrate the non-negativity of the solution. It is important to note that, when using the positivity-preserving iteration to prove non-negativity, the available weighted estimates for $K_T$ do not directly close this iteration for hard potentials, so we introduce a truncation and limiting procedure on the ternary collision kernel.
		
		\begin{theorem}\label{local existence} 
			Let \( N\ge2 \), \( \gamma_2 \in (-\frac{3}{2}, 1] \), \( \gamma_3 \in (-3, 1] \), and assume that \( \gamma_3 \leq \gamma_2 \). 
			Suppose the initial data for \eqref{BT-g} satisfies
			$f_0(x,v) = \mu(v) + \sqrt{\mu(v)} g_0(x,v) \geq 0.$
			Then, there exist constants \( T^{*}, M_{0} \in (0,1) \) such that if the initial perturbation satisfies  
			\[ \|g_0\|^{2}_{H^N_x L^2_v} \leq {M_{0}}, \]
			the binary-ternary Boltzmann equation \eqref{BT-g} admits a unique local-in-time  solution \( g(t,x,v) \) in \([0, T^{*}]\times\mathbb{R}^{6}_{x,v}\) satisfying:
			\begin{enumerate}
				\item[(i)] \( f(t,x,v) = \mu(v) + \sqrt{\mu(v)} \, g(t,x,v) \geq 0 \);
				\item[(ii)] \(\displaystyle \|g(t)\|^{2}_{H^N_x L^2_v} + \int_{0}^{t}\|g(s)\|^{2}_{H^N_x L^2_\mathcal{D}} ds \le 2\|g_0\|^{2}_{H^N_x L^2_v},\)$\quad\forall\ t \in [0, T^{*}]$;
				\item[(iii)] \(\|g(t)\|_{H^N_x L^2_v}\) is continuous on \([0, T^{*}]\).
			\end{enumerate}
		\end{theorem}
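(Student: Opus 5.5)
We construct the solution by a Guo-type iteration in which the loss parts are kept on the left, so that positivity propagates. Given $g^{n}$, we set $F^n=\mu+\sqrt{\mu}g^n$ and define $g^{n+1}$ by the linear problem
\begin{align*}
\partial_t g^{n+1}+v\cdot\nabla_x g^{n+1}+\nu_B g^{n+1}+\nu_T g^{n+1}
+\Gamma_{B,-}(g^{n+1},g^n)+\big[T_-(g^{n+1},g^n,\cdot)\text{-type loss terms}\big]
=K_Bg^n+K_Tg^n+\Gamma_{B,+}(g^n,g^n)+\ldots,
\end{align*}
with $g^{n+1}(0)=g_0$ and $g^0=g_0$. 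This is equivalent to
\[
\partial_t F^{n+1}+v\cdot\nabla_xF^{n+1}+F^{n+1}\big(R_B(F^n)+R_T(F^n,F^n)\big)=Q_{B,+}(F^n,F^n)+Q_{T,+}(F^n,F^n,F^n).
\]
Since $Q_{B,+},Q_{T,+}\ge0$ for $F^n\ge0$, integrating along characteristics with the exponential integrating factor gives $F^{n+1}\ge0$ by induction from $f_0\ge0$. Each linear step is solvable because the coefficients are known; for soft potentials the multipliers $R_B,R_T$ are bounded under the $H^N_x$ control of $g^n$ (with $N\ge2$, $H^2_x\subset L^\infty_x$).

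\textbf{Uniform bounds.} We apply $\partial^\alpha$ for $|\alpha|\le N$ and take the $L^2_{x,v}$ inner product with $\partial^\alpha g^{n+1}$. The transport term vanishes. The term $\nu_B|\partial^\alpha g^{n+1}|^2$ yields $\|\cdot\|^2_{L^2_\mathcal D}$ dissipation, since $\nu_B\sim\langle v\rangle^{\gamma_2}$ by \eqref{nuB}, and $\nu_T\ge0$. The right side is handled as follows:
\begin{itemize}
\item Corollary \ref{Estimates of LK} controls $K_B$ by $\|\cdot\|_{L^2_v}$ and $K_T$ by $\|\cdot\|_{L^2_{\gamma_3}}$. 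Since $\gamma_3\le\gamma_2$, we have $\|\cdot\|_{L^2_{\gamma_3}}\lesssim\|\cdot\|_{L^2_\mathcal D}$. This is exactly where the hypothesis $\gamma_3\le\gamma_2$ enters.
\item The nonlinear terms are bounded by Lemma \ref{Estimates of GammaB}, Lemma \ref{Estimates of T} and Corollary \ref{Estimates of GammaT}, after Leibniz distribution of derivatives and Sobolev in $x$ (one factor in $L^\infty_x$ or $L^4_x$).
\end{itemize}
The result has the form
\[
\frac{d}{dt}\|g^{n+1}\|^2_{H^N_xL^2_v}+\|g^{n+1}\|^2_{H^N_xL^2_\mathcal D}\lesssim \|g^n\|_{H^N_xL^2_\mathcal D}\|g^{n+1}\|_{H^N_xL^2_\mathcal D}\big(1+\sqrt{\mathcal E^n}+\mathcal E^n\big)+\ldots
\]
The linear $K$ terms involve $g^n$ rather than $g^{n+1}$, so they cannot be absorbed directly. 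Integrating in time, they contribute $C\int_0^t\|g^n\|^2_{\mathcal D}$ with a constant that is not small.

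\textbf{Main obstacle.} The difficulty is closing the factor $2$ in (ii) despite this. For hard potentials $K_T$ is not $L^2_v$-bounded, so its contribution must be paid with $\mathcal D$-norms. My first attempt is to split $K=K^{\chi}+K^{1-\chi}$ with a large-velocity/singularity cutoff, as in Guo: the compact smooth part is bounded in $L^2_v$ and gains a factor $T^*$ after time integration, while the remainder has small operator norm $\eta$ in the $\mathcal D$-weighted norm. This gives
\[
\sup_t\|g^{n+1}\|^2+\int_0^t\|g^{n+1}\|^2_\mathcal D\le\|g_0\|^2+(CT^*+\eta)\sup\Big(\|g^n\|^2+\int\|g^n\|^2_\mathcal D\Big)+C(M_0^{1/2}+M_0)\cdot(\ldots).
\]
Choosing $\eta,T^*,M_0$ small closes the induction with bound $2\|g_0\|^2$.

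If the splitting of $K_T$ is delicate, I would instead, as the paper hints, truncate $B_3$ to a bounded kernel $B_3^{m}$ (cut small and large $|\tilde{\bm u}|$). Then $K_T^m$ is $L^2$-bounded, so the iteration and positivity hold, with estimates uniform in $m$ by the lemmas above. We then pass $m\to\infty$ by weak compactness, and positivity survives the weak limit.

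\textbf{Convergence, uniqueness and continuity.} Differences $g^{n+1}-g^n$ satisfy the same type of energy inequality with a contraction factor $<1$ in the norm $\sup_t\|\cdot\|^2_{L^2_{x,v}}+\int\|\cdot\|^2_{\mathcal D}$, for $T^*,M_0$ small. This gives a strong limit $g$ that inherits (i) and (ii); the same difference estimate yields uniqueness. For (iii), the energy identity shows $\|g(t)\|^2_{H^N_xL^2_v}$ is absolutely continuous, its derivative being integrable by the $\mathcal D$ bounds.
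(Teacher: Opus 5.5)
\textbf{There is a genuine gap.} Your positivity-preserving scheme leaves $K_T g^n$ on the right-hand side, and for $\gamma_3>0$ Corollary~\ref{Estimates of LK} only gives $|\langle K_T g^n,g^{n+1}\rangle|\lesssim\|g^n\|_{L^2_{\mathcal D}}\|g^{n+1}\|_{L^2_{\mathcal D}}$, with no factor of $T^*$. Neither of your two cures closes this in the full range of the theorem.

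The splitting claim, that ``the remainder has small operator norm $\eta$ in the $\mathcal D$-norm'', is not justified for the ternary operator. With only Corollary~\ref{Estimates of LK}, a velocity cutoff gives two pieces:
\begin{itemize}
\item the bounded part, $|\langle K_T(\chi_{\{|v|\le R\}}g^n),g^{n+1}\rangle|\lesssim\langle R\rangle^{\gamma_3/2}\|g^n\|_{L^2_v}\|g^{n+1}\|_{L^2_{\mathcal D}}$, which does gain $\sqrt{T^*}$ after time integration;
\item the remainder, $|\langle K_T(\chi_{\{|v|>R\}}g^n),g^{n+1}\rangle|\lesssim\langle R\rangle^{(\gamma_3-\gamma_2)/2}\|g^n\|_{L^2_{\mathcal D}}\|g^{n+1}\|_{L^2_{\mathcal D}}$.
\end{itemize}
So your first route actually closes when $\gamma_3<\gamma_2$ strictly. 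At the endpoint $\gamma_3=\gamma_2>0$, which contains the hard-sphere case, the remainder is not small. There you would need $L^2_v$-boundedness of $K_T$, or a Grad/Guo-type kernel decomposition of it, and the paper explicitly does not establish this. The same obstruction also breaks your contraction estimate for $g^{n+1}-g^n$.

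The truncation fallback has the same hole. $K_{T,m}$ is $L^2_v$-bounded only with norm $\sim m$, so your iteration closes only on $[0,T^m]$ with $T^m\to0$. Meanwhile the $m$-uniform bound from the lemmas is again the non-closing $L^2_{\gamma_3}$ bound. So ``estimates uniform in $m$'' on a common time interval, which is what you need before letting $m\to\infty$, is exactly the unproved step.

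The missing idea is to use $\langle L_T g,g\rangle\ge0$ (Proposition~\ref{prop:LT_properties}) with $\nu_T$ and $K_T$ acting on the \emph{same} function, so that $K_T$ never appears on the right. The paper does this twice.
\begin{itemize}
\item \emph{Existence and uniqueness.} It uses a different, non-positivity-preserving scheme,
$\partial_t g^{n+1}+v\cdot\nabla_x g^{n+1}+\nu_B g^{n+1}+L_T g^{n+1}=K_Bg^n+\Gamma_B(g^n,g^n)+\Gamma_T(g^n,g^n)+T(g^n,g^n,g^n)$.
Here $(L_T\partial^\alpha g^{n+1},\partial^\alpha g^{n+1})\ge0$, and the only linear term on the right is $K_Bg^n$. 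That term is bounded on $L^2_v$, so it gains $T^*$.
\item \emph{Positivity for $\gamma_3\in(0,1]$.} Your truncated positivity-preserving iteration is used only to produce $\bar g^m$ with $\mu+\sqrt\mu\,\bar g^m\ge0$ on $[0,T^m]$. The $m$-independent bound $\mathbb{E}_{T^*}(\bar g^m)\le2\|g_0\|^2_{H^N_xL^2_v}$ comes from an a priori estimate on the truncated \emph{equation}, where $L_{T,m}\bar g^m$ sits on the left and is nonnegative. A continuation argument then reaches $[0,T^*]$: restart the iteration at $T^m,2T^m,\dots$, with the data staying below $M_1$. Finally $m\to\infty$, and uniqueness from the first scheme transfers positivity to $g$.
\end{itemize}

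Two smaller points. First, start the iteration from $g^0=0$ (i.e.\ $F^0=\mu$) rather than $g^0=g_0$. For $\gamma_2>0$, $\int_0^{T}\|g_0\|^2_{H^N_xL^2_{\mathcal D}}\,dt$ need not be finite when $g_0\in H^N_xL^2_v$, so your base case can fail. Second, weak compactness alone does not identify the limit of the nonlinear collision terms. It is cleaner to estimate $\bar g^m-g$ directly, with source given by the $(B_3-B_m)$-terms; this gives strong convergence and hence positivity of $g$.
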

	     \begin{proof}
	 For notational convenience in the proof of this theorem, we set 
	 \[
	 \|g(t)\|^{2}_{E}:=\|g(t)\|^{2}_{H^{N}_{x}L^{2}_{v}}, \qquad 
	 \|g(t)\|^{2}_{D}:=\|g(t)\|^{2}_{H^{N}_{x}L^{2}_{\mathcal{D}}},
	 \] 
	 and define the total energy functional 
	 \[
	 \mathbb{E}_{T}(g):=\sup_{t\in[0,T]}\|g(t)\|^{2}_{E}+\int_{0}^{T}\|g(t)\|^{2}_{D}\,dt.
	 \]    	
	   
	  \noindent \textit{Step 1. Local well-posedness.}
	     	
	   We establish the existence and uniqueness of a solution  to  equation \eqref{BT-g} on a sufficiently small time interval \([0, T^*]\).
	   Specifically, we adopt the following linearized iterative scheme:   
	   \begin{align}\label{iterative scheme g 1}
	   	\begin{aligned}
	   		\partial_t g^{n+1} &+ v \cdot \nabla_x g^{n+1} + \nu_{B}(v)g^{n+1} + L_{T}g^{n+1} \\  
	   		&= K_{B}g^{n} + \Gamma_{B}(g^{n}, g^{n}) + \Gamma_{T}(g^{n}, g^{n}) + T(g^{n}, g^{n}, g^{n}),
	   	\end{aligned} 
	   \end{align}  
	   with initial data \(g^{n+1}(0) = g_0\) and starting from \(g^0 = 0\).  
      First, we claim that the sequence $ g^{n} $  admits a uniform estimate.
      \begin{lemma}\label{uniform estimate gn}
      	There exist sufficiently small parameters $T^* > 0$ and $M_0 > 0$ such that  
      	\[
      	\sup_{n \geq 0} \mathbb{E}_{T^*}(g^n) \leq 2 \|g_0\|_E^2.
      	\]
      \end{lemma}
	  \begin{proof}
	  	We proceed by induction.  
	  	First, we observe that for any \(T^{*}\in(0,1)\), 
	  	\(
	  	\mathbb{E}_{T^{*}}(g^{1})\le 2\,\|g_{0}\|_{E}^{2},
	  	\)
	  	since \(g^{0}\equiv 0\) and $ L_T $ is positive. 
	  	Next, we assume that for some \(T^{*}\in(0,1)\),  
	  	\begin{equation}\label{induction gn}
	  		\mathbb{E}_{T^{*}}(g^{n})\le 2\,\|g_{0}\|_{E}^{2}.
	  	\end{equation} 
	  	We shall prove that the same bound holds for \(g^{n+1}\). 
	  	This is established through the energy estimate procedure.
	  	Applying the spatial derivative operator \(\partial^\alpha\) to equation \eqref{iterative scheme g 1}, multiplying by \(\partial^\alpha g^{n+1}\), integrating over \(x\) and \(v\), and summing over all multi-indices \(\alpha\) with \(|\alpha| \leq N\) (\(N\ge2\)) yields:  
	  	\begin{align*}
	  		\frac{1}{2} \frac{d}{dt} \| g^{n+1}(t) \|_{E}^{2} &+ \| g^{n+1}(t) \|_{D}^{2} + \sum_{|\alpha| \leq N} \left( L_T \partial^\alpha g^{n+1}, \partial^\alpha g^{n+1} \right) \\
	  		&= \sum_{|\alpha| \leq N} \left( K_B \partial^\alpha g^{n}, \partial^\alpha g^{n+1} \right) \\
	  		&+ \sum_{|\alpha| \leq N} \left( \partial^\alpha \left[ \Gamma_{B}(g^{n}, g^{n}) + \Gamma_{T}(g^{n}, g^{n}) + T(g^{n}, g^{n}, g^{n}) \right]\!, \partial^\alpha g^{n+1} \right)\!.  
	  	\end{align*}
  	Utilizing the positivity of the operator \(L_T\) and the estimates from Corollary \ref{Estimates of LK}, we obtain  
  	\[
  	\left( L_T \partial^\alpha g^{n+1}, \partial^\alpha g^{n+1} \right) \geq 0,  
  	\]
  	and\[
  	  \sum_{|\alpha| \leq N} \left( K_B \partial^\alpha g^{n}, \partial^\alpha g^{n+1} \right) \lesssim \| g^{n} \|_{E} \| g^{n+1} \|_{E}.
  	\] 
  	Furthermore, by combining Lemma \ref{Estimates of GammaB}, Lemma \ref{Estimates of T}, and the Sobolev inequalities from Lemma \ref{Sobolev inequalities}, the regularity condition \( N \geq 2 \) allows us to derive the following bound:
  	\begin{align*}
  		\sum_{|\alpha| \leq N} & \left( \partial^\alpha \left[ \Gamma_{B}(g^{n}, g^{n}) + \Gamma_{T}(g^{n}, g^{n}) + T(g^{n}, g^{n}, g^{n}) \right]\!, \partial^\alpha g^{n+1} \right) \\ 
  		\lesssim &  \left( \| g^{n} \|_{E} \| g^{n} \|_{D} + \| g^{n} \|^{2}_{E} \| g^{n} \|_{D} \right) \| g^{n+1} \|_{D}.
  	\end{align*} 
  	These estimates collectively imply that
  	\begin{align*}
  		\frac{1}{2} \frac{d}{dt} & \| g^{n+1}(t) \|_{E}^{2} + \| g^{n+1}(t) \|_{D}^{2} \\
  		\lesssim&  \| g^{n} \|_{E} \| g^{n+1} \|_{E} + \left( \| g^{n} \|_{E} \| g^{n} \|_{D} + \| g^{n} \|^{2}_{E} \| g^{n} \|_{D} \right) \| g^{n+1} \|_{D} \\
  		\lesssim&  \varepsilon \left( \| g^{n+1} \|^{2}_{E} + \| g^{n+1} \|^{2}_{D} \right) + C_{\varepsilon}\|g^{n}\|_{E}^{2} + C_{\varepsilon} \left( \| g^{n} \|^{2}_{E} + \| g^{n} \|^{4}_{E} \right) \| g^{n} \|^{2}_{D}.
  	\end{align*} 
	  	Integrating the above inequality from 0 to \(t\) for arbitrary \(t \in [0, T^*]\), taking the supremum over \(t \in [0, T^*]\), and selecting sufficiently small \(\varepsilon > 0\), the induction hypothesis \eqref{induction gn} yields 
	  	$$\mathbb{E}_{T^{*}}(g^{n+1})\le \frac{4}{3}\|g_{0}\|_{E}^{2}+C(T^*+M_{0}+M_{0}^{2})\|g_{0}\|_{E}^{2}.$$
	  	 Finally, selecting smaller parameters \(T^* > 0\) and \(M_0 > 0\) when necessary such that   
	   $$C(T^*+M_{0}+M_{0}^{2})\le\frac{1}{3}.$$
	  	 Consequently,   
	  	$$\mathbb{E}_{T^{*}}(g^{n+1})\le 2\|g_{0}\|_{E}^{2}.$$
	  	 This completes the proof of the lemma. 
	  \end{proof}
	  Next, we define  \(h^{n+1} = g^{n+1} - g^{n}\). 
	  Then, from \eqref{iterative scheme g 1}, we deduce that \(h^{n+1}\) satisfies the following equation:
	  \begin{align*}
	  	\partial_t h^{n+1} &+ v \cdot \nabla_x h^{n+1} + \nu_{B}(v)h^{n+1} + L_{T}h^{n+1} \\  
	  	=& K_{B}h^{n} + \Gamma_{B}(h^{n}, g^{n}) +  \Gamma_{B}(g^{n-1}, h^{n})+\Gamma_{T}(h^{n}, g^{n})+ \Gamma_{T}(g^{n-1}, h^{n})\\
	  	& + T(h^{n}, g^{n}, g^{n})+ T(g^{n-1}, h^{n}, g^{n})+ T(g^{n-1}, g^{n-1}, h^{n}),
	  \end{align*} 
	  with initial data \(h^{n+1}(0) = 0\).
	  Using the uniform bounds established in Lemma \ref{uniform estimate gn} and employing a similar proof strategy, we deduce that for sufficiently small \(T^* > 0\) and an appropriately chosen \(M_0 > 0\), it can be shown that 
	  $$\mathbb{E}_{T^{*}}(h^{n+1})\le \frac{1}{2}\mathbb{E}_{T^{*}}(h^{n}).$$
	  Therefore,  the sequence \(\{g^n\}\) forms a Cauchy sequence in the space  
	  \[
	  L^{\infty}\big([0, T^*]; H^{N}_{x}L^{2}_{v}\big) \cap L^{2}\big([0, T^*]; H^{N}_{x}L^{2}_{\mathcal{D}}\big).
	  \]
	  Taking the limit as \(n \to \infty\) yields a solution \(g\) to equation \eqref{BT-g}, which satisfies the energy estimate 
	  \[
	  \mathbb{E}_{T^{*}}(g) \leq 2\|g_{0}\|_{E}^{2}.
	  \]
	  
	   Regarding uniqueness, suppose there exists another solution \(\tilde{g}\) to equation \eqref{BT-g} satisfying   
	  \[
	  \mathbb{E}_{T^{*}}(\tilde{g}) \leq 2M_{0}.
	  \]  
	   Define the difference \(h = g - \tilde{g}\). Then, employing a similar argument as above, it can be shown that   
	  \[
	  \mathbb{E}_{T^{*}}(h) \leq \frac{1}{2} \mathbb{E}_{T^{*}}(h),
	  \]  
	   which implies  \(\mathbb{E}_{T^{*}}(h) = 0\),  thus establishing uniqueness. 
	  
	 To demonstrate the continuity of the solution \(g\) with respect to time \(t\), we reformulate \eqref{BT-g} as the following system:   
	  \[
	  \begin{cases}  
	  	\partial_t g + v \cdot \nabla_x g + \nu_{B}(v)g + L_{T} g = K_{B}g + \Gamma_{B}(g, g) + \Gamma_{T}(g, g) + T(g, g, g), \\  
	  	g(0, x, v) = g_0(x, v).  
	  \end{cases}
	  \]  
	 Using the energy identity, justified by regularization, together with Corollary \ref{Estimates of LK} and the nonlinear estimates, we obtain
     \[
     \left|\frac{d}{dt}\|g(t)\|_E^2\right|
     \lesssim \left(1+\|g(t)\|_E+\|g(t)\|_E^2\right)\|g(t)\|_D^2.
     \]
	 Hence, for any  \(s, t \in [0, T^*]\), integrating the above inequality from \(\min\{s,t\}\) to \(\max\{s,t\}\) yields 
	 \[
	 \left|\, \| g(t) \|_{E}^{2} - \| g(s) \|_{E}^{2} \,\right| \lesssim \left(1 + \sqrt{M_0} + M_0\right) \int_{\min\{s,t\}}^{\max\{s,t\}} \| g(\tau) \|_{D}^{2}  d\tau \to 0, \quad \text{as } t \to s.
	 \]
	  
	  \noindent \textit{Step 2. Positivity.}
	  
	  The approximating sequence from Step 1 is inadequate for proving positivity.
	  Therefore, we construct a modified iterative scheme as follows: 
	     	\small
	     	\begin{align}\label{iterate f(n)}
	     		\left\{
	     		\begin{aligned}
	     			&\left(\partial_t  + v \cdot \nabla_x+R_{B}(f^{n})+R_{T}(f^{n},f^{n})\right)f^{n+1}=Q_{B,+}(f^{n},f^{n})+Q_{T,+}(f^{n},f^{n},f^{n}),\\[0.5mm] 
	     			&f^{n+1}(0,x,v)= f_0(x,v),\\
	     		\end{aligned}
	     		\right.
	     	\end{align}
     	\normalsize 
     	where $ Q_{B,+}, $ $ R_{B}, $  $ Q_{T,+}, $  $ R_{T} $ are defined in \eqref{eq:Qz1}, \eqref{eq:Qz2}, \eqref{eq:QT_plus} and \eqref{eq:QT_minus}. 
     	It follows directly from the iterative scheme \eqref{iterate f(n)} that if \(f_0 \geq 0\) and \(f^n \geq 0\), then
     	\begin{align*}
     		f^{n+1}(t,x,v) = & \, f_{0}(x - tv, v) \exp\left(-\int_{0}^{t} \left(R_{B}(f^{n}) + R_{T}(f^{n}, f^{n})\right)(s, x - (t - s)v, v) \, ds\right) \\
     		& + \int_{0}^{t} \left(Q_{B,+}(f^{n}, f^{n}) + Q_{T,+}(f^{n}, f^{n}, f^{n})\right)(s, x - (t - s)v, v) \\
     		& \quad \times \exp\left(-\int_{s}^{t} \left(R_{B}(f^{n}) + R_{T}(f^{n}, f^{n})\right)(\tau, x - (t - \tau)v, v) \, d\tau\right) ds \\
     		\ge & \, 0.
     	\end{align*}
       Consequently, the iterative scheme preserves positivity.
     	In the iterative scheme \eqref{iterate f(n)}, setting \( f^{n+1}(t,x,v) = \mu(v) + \sqrt{\mu(v)} \bar{g}^{n+1}(t,x,v) \), we can readily obtain the equation for \( \bar{g}^{n+1} \) as 
     	\begin{align}\label{iteration bargn}
     		\begin{aligned}
     			\partial_t \bar{g}^{n+1}&  + v \cdot \nabla_x \bar{g}^{n+1}+ (\nu_{B}(v)+\nu_{T}(v))\bar{g}^{n+1} \\[1mm]
     			=& (K_{B}+K_{T})\bar{g}^{n}+\Gamma_{B,+}(\bar{g}^{n},\bar{g}^{n})-\Gamma_{B,-}(\bar{g}^{n+1},\bar{g}^{n})\\[1mm]
     			&+T(\sqrt{\mu},\bar{g}^{n},\bar{g}^{n})+T_{+}(\bar{g}^{n},\sqrt{\mu},\bar{g}^{n})-T_{-}(\bar{g}^{n+1},\sqrt{\mu},\bar{g}^{n})\\[1mm]
     			&+T_{+}(\bar{g}^{n},\bar{g}^{n},\sqrt{\mu})-T_{-}(\bar{g}^{n+1},\bar{g}^{n},\sqrt{\mu})\\[1mm]
     			&+T_{+}(\bar{g}^{n},\bar{g}^{n},\bar{g}^{n})-T_{-}(\bar{g}^{n+1},\bar{g}^{n},\bar{g}^{n}),
     		\end{aligned} 
     	\end{align}  
     	with $\bar{g}^{n+1}(0) = g_0,\quad \bar{g}^{0}=0.$
     	
     	\noindent{Case (i):} $\gamma_{3}\in(-3,0].$
     	
     	For the case of ternary collisions with soft potentials, we observe from Corollary \ref{Estimates of LK} that  
     	\[
     	|\langle K_B g, h \rangle| \lesssim\|g\|_{L^2_{v}}\|h\|_{L^2_{v}},
     	\]
     	$$|\langle K_{T}g,h\rangle|\lesssim  \|g\|_{L^{2}_{\gamma_{3}}}\|h\|_{L^{2}_{\gamma_{3}}}\lesssim\|g\|_{L^2_{v}}\|h\|_{L^2_{v}}.$$    
     	Incorporating the coercivity estimate
     	\[
     	  \langle(\nu_{B}(v)+\nu_{T}(v))g,g\rangle \geq \langle \nu_{B}(v)g,g\rangle = \|g\|^{2}_{L^{2}_{\mathcal{D}}},
     	  \]
     	  together with the nonlinear estimates from Lemmas \ref{Estimates of GammaB} and \ref{Estimates of T} and adapting the proof strategy of Lemma \ref{uniform estimate gn} from Step 1, we show that there exist sufficiently small parameters \(T^*,\) \(M_1 \in (0,1)\) such that if
     	  \[
     	  \|g_0\|^{2}_{H^N_x L^2_v} \leq M_{1},
     	  \]
     	  then the following uniform bound holds:
     	  \[
     	  \sup_{n \geq 0} \mathbb{E}_{T^*}(\bar{g}^n) \leq 2 \|g_0\|_E^2.
     	  \]
     	Taking the limit as \( n \to \infty \) in the iterative scheme \eqref{iteration bargn} establishes the existence of a local-in-time solution \(\bar{g}\) to equation \eqref{BT-g}. 
     	This solution satisfies   \(\mu + \sqrt{\mu} \bar{g} \geq 0\) and the energy bound
     	\[
     	\mathbb{E}_{T^*}(\bar{g}) \leq 2 \|g_0\|_E^2.
     	\]
     	Thus, by selecting \(\|g_0\|_E^2 \leq M_0 \leq M_1\), it follows from the uniqueness result established in Step 1 that the local solution \(g\) to equation \eqref{BT-g} constructed in Step 1 also satisfies  \(\mu + \sqrt{\mu} g \geq 0\).

     	\noindent{Case (ii):} $\gamma_{3}\in(0,1].$
     	
     	For the case of ternary collisions with hard potentials, we note that due to the growth of \(\langle v\rangle^{\gamma_{3}}\), the operator \(K_{T}\) may not be bounded in \(L^2_v\) according to Corollary \ref{Estimates of LK}. To address this, we introduce a truncation of the ternary collision kernel \(B_3\): for any positive integer \(m\), define  
     	\[
     	B_m := \min\{B_3, m\}.
     	\]  
     	Replacing the original kernel \(B_3\) in equation \eqref{BT-g} with this truncated version, we obtain the following modified equation:  
     	\small
     	\begin{equation}\label{BT-gm}
     		\begin{cases}  
     			\partial_t \bar{g}^{m} + v \cdot \nabla_x \bar{g}^{m} + L_{B}\bar{g}^{m} + L_{T,m}\bar{g}^{m} = \Gamma_{B}(\bar{g}^{m}, \bar{g}^{m}) + \Gamma_{T,m}(\bar{g}^{m}, \bar{g}^{m}) + T_{m}(\bar{g}^{m}, \bar{g}^{m}, \bar{g}^{m}), \\  
     			\bar{g}^{m}(0, x, v) = g_0(x, v).  
     		\end{cases}  
     	\end{equation}  
     \normalsize
     	Moreover, the following estimates hold for the truncated operators:  
     	\[
     	0 \leq \nu_{T,m}(v) \lesssim \min\left\{\langle v \rangle^{\gamma_3}, m\right\},
     	\]  
     	and  
     	\[
     	\left| \langle K_{T,m}g, h \rangle \right| \lesssim m \|g\|_{L^2_v} \|h\|_{L^2_v}.
     	\]
     	Evidently, the upper bound estimates for the nonlinear terms stated in Lemma \ref{Estimates of T} and Corollary \ref{Estimates of GammaT} remain valid for \(T_m\) and \(\Gamma_{T,m}\). We now employ the positivity-preserving iterative scheme associated with \eqref{iteration bargn} to solve \eqref{BT-gm}. 
     	Adapting the method used in Case (i), we deduce that for any fixed positive integer \(m\), there exist parameters \(T^m > 0\) and \(M_1 \in (0,1)\) such that, under the condition \(\|g_0\|^{2}_{H^N_x L^2_v} \leq M_1\), there exists a solution \(\bar{g}^{m}\) to equation \eqref{BT-gm} on $[0,T^{m}]$ satisfying 
     	$$\mu + \sqrt{\mu}\bar{g}^{m} \geq 0$$
     	and
     	\[
     	\mathbb{E}_{T^m}(\bar{g}^{m}) \leq 2 \|g_0\|_E^2.
     	\]
     	On the other hand, we reformulate \eqref{BT-gm} as follows:
     	\[
     	\begin{cases}
     		\partial_t \bar{g}^{m} + v \cdot \nabla_x \bar{g}^{m} + \nu_{B}\bar{g}^{m} + L_{T,m}\bar{g}^{m} = K_{B}\bar{g}^{m} + \Gamma_{B}(\bar{g}^{m}, \bar{g}^{m})\\
     		\quad\quad\quad\quad\quad\quad\quad\quad\quad\quad\quad\quad\quad\quad\quad\quad\quad + \Gamma_{T,m}(\bar{g}^{m}, \bar{g}^{m}) + T_{m}(\bar{g}^{m}, \bar{g}^{m}, \bar{g}^{m}), \\
     		\bar{g}^{m}(0, x, v) = g_0(x, v).
     	\end{cases}
     	\]
     	Utilizing the  positivity of \(L_{T,m}\)  and  following the approach from Step 1, we can prove that there exist constants \(T^* > 0\) and \(M_0 > 0\),  both independent of \(m\), such that under the condition \(\|g_0\|^{2}_{H^N_x L^2_v} \leq M_0\), the solution \(\bar{g}^{m}\) satisfies the following a priori  estimate:
     	\begin{equation}\label{a priori bargm}
     		\mathbb{E}_{T^*}(\bar{g}^{m}) \leq 2 \|g_0\|_E^2.
     	\end{equation} 
     	In other words, for every \(m\), the existence interval of \(\bar{g}^{m}\) can be extended to \([0, T^*]\), with the solution satisfying \(\mu + \sqrt{\mu}\bar{g}^{m} \geq 0\) and the following uniform estimate:
     	\[
     	\sup_m \, \mathbb{E}_{T^*}(\bar{g}^{m}) \leq 2 \|g_0\|_E^2.
     	\]
     	Specifically, assume \(\|g_0\|^{2}_{H^N_x L^2_v} \leq M_0 \leq \frac{M_1}{2}\). If \(T^{m} < T^*\), then from \eqref{a priori bargm} it follows that \(\|\bar{g}^{m}(T^{m})\|^{2}_{H^N_x L^2_v} \leq 2\|g_0\|^{2}_{H^N_x L^2_v} \leq M_1\). We can then use \(\bar{g}^{m}(T^{m})\) as initial data and extend the solution to the interval \([0, 2T^{m}]\) via the positivity-preserving iterative scheme \eqref{iteration bargn}. Again, from \eqref{a priori bargm}, we obtain \(\|\bar{g}^{m}(2T^{m})\|^{2}_{H^N_x L^2_v} \leq 2\|g_0\|^{2}_{H^N_x L^2_v} \leq M_1\). By repeating this process a finite number of times, the solution \(\bar{g}^{m}\) can necessarily be extended to the full interval \([0, T^*]\). 
     	Therefore, taking the limit \( m \to \infty \) in \eqref{BT-gm} yields a solution \(\bar{g}\) to equation \eqref{BT-g}, which satisfies the nonnegativity condition \(\mu + \sqrt{\mu}\bar{g} \geq 0\) and the energy bound  
     	\[
     	\mathbb{E}_{T^*}(\bar{g}) \leq 2 \|g_0\|_E^2.
     	\]  
     	By uniqueness (Step 1), the solution \(g\) constructed therein also satisfies \(\mu + \sqrt{\mu} g \geq 0\).

	     \end{proof}

		\paragraph{4.2. Macroscopic Estimate.}\label{macroscopic estimate}
		
		By Proposition \ref{prop:L_properties}, the macro-micro decomposition for the equation \eqref{BT-g} is formulated as:  
		\begin{align}\label{macro-micro decomposition}
			\begin{aligned} 
				g = & \mathbf{P}g + \{\mathbf{I} - \mathbf{P}\}g\\ 
				=&:g_{1}+g_{2} \\ 
				=& \sqrt{\mu}(a + \mathbf{b} \cdot v + c|v|^2)+g_{2},
			\end{aligned}   
		\end{align}   
		where $ \mathbf{b}=(b_{1},b_{2},b_{3}) $ and \( \mathbf{P} \) denotes the orthogonal projection onto \( \operatorname{Ker}(L) .\)
		The derivation of the macroscopic equations can be obtained by similar steps as in \cite{MR2420519}. For the sake of completeness, we will restate it here.
		By substituting $g= g_{1} + g_{2}$ into equation \eqref{BT-g}, we express the hydrodynamic component $g_{1}$ in terms of the microscopic component $g_{2}$ and the nonlinear terms $\Gamma_{B},$ $\Gamma_{T}$ and $T$.
		\[
		(\partial_t + v\cdot \nabla_x)g_{1} = -\partial_tm+l+G ,
		\] 
		where
		$$m:=g_{2},\quad l:=- (v \cdot \nabla_x+L)g_{2},\quad G:= \Gamma_{B}(g,g)+ \Gamma_{T}(g,g) + T(g,g,g). $$
		Subsequently, based on \eqref{macro-micro decomposition}, we expand the left-hand side of the equation as:
		\small
		\[
		\left( \partial_t a + \sum_{i}\left((\partial_{t}b_{i}+ \partial_{x_i}a) v_{i} +   \partial_{x_i}c v_i|v|^{2} + (\partial_t c+\partial_{x_i} b_i) |v_{i}|^{2}\right) +  \sum_{i<j} (\partial_{x_i} b_j +\partial_{x_j} b_i)v_i v_j \right)\sqrt{\mu}.
		\]  
		\normalsize
		This is a linear combination of the 13 basis functions,  
		\[
		\left\{e_{k}\right\}_{k=1}^{13}:=\left\{
		\sqrt{\mu}, \  {v_i}\sqrt{\mu}, \  |v_{i}|^{2}\sqrt{\mu }, \   {v_iv_j} \sqrt{\mu}, \  {v_i}|v|^{2}\sqrt{\mu}
		\right\},
		\] 
		for \( 1 \leq i, j \leq 3 \).
		By expanding the right-hand side with respect to the same basis  and comparing the corresponding coefficients on both sides, we derive the following macroscopic equations for $a$, $b_i$, and $c:$ 
		\begin{equation}\label{Macro equation}
			\begin{aligned}
				\partial_t a &=-\partial_{t} m_{a}+ l_a + G_a, \\
				\partial_{t}b_{i}+ \partial_{x_i}a &=-\partial_{t} m_{abi}+ l_{abi} + G_{abi}, \\
				\partial_t c+\partial_{x_i} b_i &=-\partial_{t} m_{bc}+ l_{bc} + G_{bc}, \\
				\partial_{x_i} b_j +\partial_{x_j} b_i &=-\partial_{t} m_{ij}+ l_{ij} + G_{ij}, \\
				\partial_{x_i}c &=-\partial_{t} m_{c}+ l_c + G_c,
			\end{aligned}
		\end{equation} 
		where the indices are taken from the set \( D = \{ a, abi, bc, ij, c \mid 1 \leq i \leq j \leq 3 \} \)  and \( m_\lambda \), \( l_\lambda \), and \( G_\lambda \) for \( \lambda \in D \) represent the coefficients of $ m, $ $ l, $ and $ G $ with respect to the basis \( \{ e_k \}_{k=1}^{13} \), respectively.
		It is evident that, for any multi-index $\alpha$ and real number $m$, the following estimate holds:
			\begin{equation}\label{m-l-estimate}
			\sum_{\lambda\in D}\|\partial^{\alpha}[m_{\lambda},l_{\lambda}]\|^{2}_{L^{2}_{x}}\lesssim\sum_{k=1}^{13}\|\langle\partial^{\alpha}[m,l],e_{k}\rangle\|^{2}_{L^{2}_{x}}\lesssim\sum_{|\alpha'|\le|\alpha|+1}\|\partial^{\alpha'}g_{2}\|^{2}_{{L^{2}_{x}L^{2}_{m}}}.
		\end{equation} 
		Moreover, following the methodology outlined in \cite{MR2420519}, and utilizing conservation laws \eqref{conservation laws g}, we deduce that the variables $a$, $\mathbf{b}$, and $c$ satisfy the following macroscopic conservation laws.
		\begin{align}\label{local conservation}
			\left\{
			\begin{aligned}
				&\partial_t a 
				 - \frac{1}{2} \nabla_x \cdot \langle |v|^2 v \sqrt{\mu}, g_2 \rangle = 0, \\
				&\partial_t \mathbf{b}
				 + \nabla_x (a + 5c) + \nabla_x \cdot \langle v \otimes v \sqrt{\mu}, g_2 \rangle = 0,\\
				&\partial_t c 
				 + \frac{1}{3} \nabla_x \cdot \mathbf{b} + \frac{1}{6} \nabla_x \cdot \langle |v|^2 v \sqrt{\mu}, g_2 \rangle = 0.
			\end{aligned}
			\right.
		\end{align} 
		 
		For $N\ge1,$ we hereby define the dissipation rate, denoted as $ \mathcal{D}_{N} $, as follows:
		\begin{equation}\label{dissipation rate}
			\mathcal{D}_{N}(g(t)):=\sum_{0<|\alpha|\le N}^{}\|\partial^{\alpha}[a,\mathbf{b},c]\|^{2}_{L^{2}_{x}} +\sum_{0\le|\alpha|\le N}^{}\|\partial^{\alpha}g_{2}(t)\|^{2}_{L^{2}_{x}L^{2}_{\mathcal{D}}}. 
		\end{equation} 
	  It is evident that this dissipation rate is equivalent to the squared $N$-th order Sobolev norm $\| \cdot \|_{H^{N}_x L^2_{\mathcal{D}}}^2$, excluding the zeroth-order macroscopic component $g_1$, that is,
	  \[\mathcal{D}_{N}(g(t))\sim \| g_{2}(t)\|^{2}_{L^{2}_{x}L^{2}_{\mathcal{D}}}+\sum_{0<|\alpha|\le N}^{}\|\partial^{\alpha}g(t)\|^{2}_{L^{2}_{x}L^{2}_{\mathcal{D}}}.\]
	   
		We now present the principal estimates concerning the macroscopic dissipation rate in the following lemma. 
\begin{lemma}\label{Macro estimate}
			Let \( N\ge2 \), \( \gamma_2 \in (-\frac{3}{2}, 1] \) and \( \gamma_3 \in (-3, 1] \).  There exists a positive constant \( C_1 \) such that
			\small
			\begin{align}\label{Macro estimate 1}
				\begin{aligned}
					\frac{d\mathcal{I}(t)}{dt}+&\|\nabla_x[a,\mathbf{b},c](t)\|^{2}_{H^{N-1}_{x}} \\[2mm]
					\le &C_{1}\left( \| g_{2}(t)\|^{2}_{H^{N}_{x}L^{2}_{\mathcal{D}}}+(\|g(t)\|^{2}_{H^{N}_{x}L^{2}_{v}}+\|g(t)\|^{4}_{H^{N}_{x}L^{2}_{v}})\mathcal{D}_{N}(g(t))\right),
				\end{aligned} 
			\end{align} 
		\normalsize
			where \( \mathcal{I}(t) \) is a temporal interaction energy functional that satisfies 
			$$|\mathcal{I}(t)|\le C_{0}\|g(t)\|^{2}_{H^{N}_{x}L^{2}_{v}}$$
			with some constant  $  C_{0}>0. $
		\end{lemma}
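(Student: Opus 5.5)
The proof follows the standard Guo/Duan interaction-functional scheme built on the macroscopic system \eqref{Macro equation} together with the local conservation laws \eqref{local conservation}. For each multi-index $\alpha$ with $|\alpha|\le N-1$, we estimate $\|\nabla_x\partial^\alpha c\|$, $\|\nabla_x\partial^\alpha \mathbf{b}\|$ and $\|\nabla_x\partial^\alpha a\|$ in turn. We then add the results with suitable small weights. $\mathcal{I}(t)$ will be the resulting sum of interaction terms of the form
\[
\sum_{|\alpha|\le N-1}\Big[(\partial^\alpha m_c,\nabla_x\partial^\alpha c)+\kappa_1\sum_{i,j}(\partial^\alpha m_{ij},\partial_{x_j}\partial^\alpha b_i)+\kappa_2(\partial^\alpha m_{abi},\partial_{x_i}\partial^\alpha a)\Big],
\]
possibly adjusted by terms coming from \eqref{local conservation}. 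Since every factor involves at most $N$ derivatives of $g$ in $L^2_v$ against Gaussian-decaying basis functions, Cauchy--Schwarz and \eqref{m-l-estimate} immediately give $|\mathcal{I}(t)|\le C_0\|g\|^2_{H^N_xL^2_v}$.

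\textbf{Step 1 ($c$).} Apply $\partial^\alpha$ to the last equation of \eqref{Macro equation} and take the $L^2_x$ inner product with $\partial^\alpha\partial_{x_i}c$. The time-derivative term is rewritten as $-\frac{d}{dt}(\partial^\alpha m_c,\partial^\alpha\partial_{x_i}c)+(\partial^\alpha m_c,\partial^\alpha\partial_{x_i}\partial_t c)$. We replace $\partial_t c$ using the third line of \eqref{local conservation}, integrate by parts, and bound the result by $\eta\|\nabla_x\partial^\alpha\mathbf b\|^2+C_\eta\|\nabla\partial^\alpha g_2\|^2$.

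\textbf{Step 2 ($\mathbf b$).} We use $\Delta b_j=\sum_i\partial_i(\partial_ib_j+\partial_jb_i)-\partial_j\nabla\cdot \mathbf b$ and the $ij$ and $bc$ equations, handling $\partial_t\mathbf b$ via \eqref{local conservation} in the same way. This produces $\eta\|\nabla\partial^\alpha(a,c)\|^2$.

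\textbf{Step 3 ($a$).} We use the $abi$ equation, with $\partial_t b$ handled by \eqref{local conservation}. This produces $\|\nabla\partial^\alpha(\mathbf b,c)\|^2$ plus micro terms.

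Choosing $1\gg\kappa_1\gg\kappa_2$ absorbs all the cross terms. The linear contributions $l_\lambda$ are controlled by \eqref{m-l-estimate} and Corollary \ref{Estimates of LK}. Because $e_k$ decays like a Gaussian, $|\langle L g_2,e_k\rangle|\lesssim\|\mu^\delta g_2\|_{L^2_v}\lesssim \|g_2\|_{L^2_{\mathcal D}}$ (Corollary \ref{Estimates of Gamma T mu} applies to the operator pieces with a weight of the form $\mu^\rho$). Up to $N$ derivatives of $g_2$ thus appear, giving the term $\|g_2\|^2_{H^N_xL^2_{\mathcal D}}$. Note that the $x$-derivative in $l$ raises the order to $|\alpha|+2\le N+1$. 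To avoid this we move one derivative onto the test function $\partial^\alpha\nabla_x(a,\mathbf b,c)$, which is why we restrict to $|\alpha|\le N-1$ and integrate by parts once.

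\textbf{Main obstacle: the nonlinear coefficients $G_\lambda$.} Corollary \ref{Estimates of Gamma T mu} with $\phi=e_k$ gives
\[
|\langle\partial^\alpha\Gamma_B(g,g),e_k\rangle|\lesssim\sum_{\beta\le\alpha}\|\mu^\delta\partial^\beta g\|_{L^2_v}\|\mu^\delta\partial^{\alpha-\beta}g\|_{L^2_v},
\]
with analogous trilinear bounds for $T$ and $\Gamma_T$ (for $\Gamma_T$ one expands it into the $T_\pm$ terms with a $\sqrt\mu$ slot). We then take the $L^2_x$ norm and put the lower-order factor in $L^\infty_x$ or $L^3_x$ via the Sobolev inequalities of Lemma \ref{Sobolev inequalities}, using $N\ge2$.

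The delicate point is that the zeroth-order macroscopic part $[a,\mathbf b,c]$ is not in $\mathcal D_N$. Wherever a factor carries no derivative, it must be placed in $L^2_x$ or $L^3_x$ and only the other factor measured in the dissipation. In $\mathbb R^3$ this requires $\|g\|_{L^\infty_x}\lesssim\|\nabla_x g\|_{H^1_x}$ and $\|g\|_{L^3_x}\lesssim \|g\|^{1/2}_{L^2}\|\nabla g\|^{1/2}_{L^2}$, which is where I expect to spend the most care. Each product should then be bounded by $\|g\|_{H^N_xL^2_v}\mathcal D_N^{1/2}$, and each trilinear term by $\|g\|^2_{H^N_xL^2_v}\mathcal D_N^{1/2}$. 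Pairing with $\|\nabla\partial^\alpha(a,\mathbf b,c)\|\le\mathcal D_N^{1/2}$ then gives $(\|g\|^2_E+\|g\|^4_E)\mathcal D_N$ after Young's inequality.
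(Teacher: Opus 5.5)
Your proposal is correct and follows the paper's route. The paper does not redo the linear part: it imports the interaction-functional argument from Theorem 3.1 of \cite{MR2420519}, with the linear terms controlled by \eqref{m-l-estimate}, which is the scheme you reconstruct. The nonlinear coefficients are handled as you propose, via Corollary \ref{Estimates of Gamma T mu} with $\phi=e_k$ followed by Sobolev embeddings that put the dissipation factor in $L^\infty_x$ or $L^6_x$ using only gradient norms, so the zeroth-order $[a,\mathbf b,c]$ never has to lie in $\mathcal D_N$.

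One caution on Step 3: do not substitute the momentum law for $\partial_t\mathbf b$ directly into the $abi$ equation, because the $\nabla_x a$ terms then cancel. Instead, either move the time derivative onto $(\partial^\alpha\mathbf b,\nabla_x\partial^\alpha a)$ and use the mass law for $\partial_t a$, or use the momentum law itself as the equation for $\nabla_x a$; the latter produces the $O(1)$ term $\|\nabla_x\partial^\alpha c\|^2$ that your weights $1\gg\kappa_1\gg\kappa_2$ absorb.
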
 
		\begin{proof}
			By employing a methodology analogous to that developed in the proof of Theorem 3.1 in \cite{MR2420519}, and incorporating equation \eqref{m-l-estimate}, we derive that for all integers $N \geq 1$, there exists a temporal interaction energy functional $\mathcal{I}(t)$ satisfying the bound 
			$$
			|\mathcal{I}(t)| \lesssim \|g(t)\|^{2}_{H^{N}_{x}L^{2}_{v}},
			$$ 
			such that the macroscopic dissipation rate adheres to the inequality 
			$$
			\frac{d\mathcal{I}(t)}{dt} + \|\nabla_x[a,\mathbf{b},c](t)\|^{2}_{H^{N-1}_{x}} \lesssim  \| g_{2}(t)\|^{2}_{H^{N}_{x}L^{2}_{\mathcal{D}}} + \sum_{k=1}^{13}\|\langle G,e_{k}\rangle\|^{2}_{H^{N}_{x}}.
			$$
			For each $e_k$ with $k = 1, \ldots, 13$, we estimate $\| \langle G, e_k \rangle \|^2_{H^N_x}$. For any multi-index $\alpha$, we have
			\begin{equation}\label{partial-GB}
				\partial^{\alpha}\Gamma_{B}(g,g)=\sum_{ \alpha'+\alpha''=\alpha}^{}C^{\alpha}_{\alpha'\alpha''} \Gamma_{B}(\partial^{\alpha'}g,\partial^{\alpha''}g),
			\end{equation}
			\begin{equation}\label{partial-GT}
				\partial^{\alpha}\Gamma_{T}(g,g)=\sum_{ \alpha'+\alpha''=\alpha}^{}C^{\alpha}_{\alpha'\alpha''} \Gamma_{T}(\partial^{\alpha'}g,\partial^{\alpha''}g),
			\end{equation}
			and
			\begin{equation}\label{partial-T}
				\partial^{\alpha}T(g,g,g)=\sum_{ \alpha'+\alpha''+\alpha'''=\alpha}^{}C^{\alpha}_{\alpha'\alpha''\alpha'''} T(\partial^{\alpha'}g,\partial^{\alpha''}g,\partial^{\alpha'''}g).
			\end{equation} 
			Therefore, by Corollary \ref{Estimates of Gamma T mu}, we deduce that 
			\begin{align}\label{partial-GB-GT ek}
				\begin{aligned}
					& |\langle\partial^{\alpha}\Gamma_{B}(g,g),e_{k} \rangle|+ |\langle\partial^{\alpha}\Gamma_{T}(g,g),e_{k} \rangle|\\
					\lesssim&\sum_{ |\alpha'|+|\alpha''|=|\alpha|}^{}\|\mu^{\delta} \partial^{\alpha'}g\|_{L^{2}_{v}} \|\mu^{\delta} \partial^{\alpha''}g\|_{L^{2}_{v}}\\
					\lesssim&\sum_{ |\alpha'|+|\alpha''|=|\alpha|}^{}\min\{\|\partial^{\alpha'}g\|_{L^{2}_{v}} \| \partial^{\alpha''}g\|_{L^{2}_{\mathcal{D}}},\|\partial^{\alpha'}g\|_{L^{2}_{\mathcal{D}}} \| \partial^{\alpha''}g\|_{L^{2}_{v}}\},
				\end{aligned} 
			\end{align}
			and
			\small
			\begin{align}\label{partial-T ek}
				\begin{aligned}
					 & |\langle\partial^{\alpha}T(g,g,g),e_{k} \rangle|\\
					 \lesssim&\sum_{ |\alpha'|+|\alpha''|+|\alpha'''|=|\alpha|}^{}\|\mu^{\delta} \partial^{\alpha'}g\|_{L^{2}_{v}} \|\mu^{\delta} \partial^{\alpha''}g\|_{L^{2}_{v}}\|\mu^{\delta} \partial^{\alpha'''}g\|_{L^{2}_{v}}\\
					 \lesssim&\sum_{ |\alpha'|+|\alpha''|+|\alpha'''|=|\alpha|}^{}\min\{\|\partial^{\alpha'}g\|_{L^{2}_{v}} \| \partial^{\alpha''}g\|_{L^{2}_{v}}\| \partial^{\alpha'''}g\|_{L^{2}_{\mathcal{D}}},\\
					 &\quad\quad\quad\quad \|\partial^{\alpha'}g\|_{L^{2}_{v}} \| \partial^{\alpha''}g\|_{L^{2}_{\mathcal{D}}}\| \partial^{\alpha'''}g\|_{L^{2}_{v}},\|\partial^{\alpha'}g\|_{L^{2}_{\mathcal{D}}} \| \partial^{\alpha''}g\|_{L^{2}_{v}}\| \partial^{\alpha'''}g\|_{L^{2}_{v}}\}.
				\end{aligned}
			\end{align}
		\normalsize
			For all multi-indices $\alpha$ with $|\alpha| \leq N$ and $ N\ge2 $, we estimate the squared $L^2_x$-norms of \eqref{partial-GB-GT ek} and \eqref{partial-T ek}, denoted by $\| \cdot \|_{L^2_x}^2$. 
			Firstly, from \eqref{partial-GB-GT ek}, we observe that 
			\begin{align*}
				& |\langle\partial^{\alpha}\Gamma_{B}(g,g),e_{k} \rangle|+ |\langle\partial^{\alpha}\Gamma_{T}(g,g),e_{k} \rangle|\\ 
				\lesssim&\sum_{ |\alpha'|+|\alpha''|\le N}^{}\min\{\|\partial^{\alpha'}g\|_{L^{2}_{v}} \| \partial^{\alpha''}g\|_{L^{2}_{\mathcal{D}}},\|\partial^{\alpha'}g\|_{L^{2}_{\mathcal{D}}} \| \partial^{\alpha''}g\|_{L^{2}_{v}}\}.
			\end{align*} 
			We consider the following two cases.
			
			 \noindent{Case (i):}  $|\alpha'| = N$ or $|\alpha''| = N$.
			
			Without loss of generality, we assume $|\alpha'| = N$, which implies $|\alpha''| = 0$. 
			Moreover, we estimate $\partial^{\alpha'}g$ using the $L^2_{x,v}$-norm, as it represents a top-order term for which the Sobolev embedding is not applicable.
			Hence, by applying the Sobolev inequality presented in Lemma \ref{Sobolev inequalities}, we derive that
			\begin{align*}
				  \left\|\|\partial^{\alpha'}g\|_{L^{2}_{v}} \|g\|_{L^{2}_{\mathcal{D}}}\right\|^{2}_{L^2_{x}} 
				\lesssim&\|\partial^{\alpha'}g\|^{2}_{L^{2}_{x}L^{2}_{v}} \|g\|^{2}_{L^{\infty}_{x}L^{2}_{\mathcal{D}}}\\
				\lesssim&\|g\|^{2}_{H^{N}_{x}L^{2}_{v}} \|\nabla_xg\|^{2}_{H^{1}_{x}L^{2}_{\mathcal{D}}}\\
				\lesssim&\|g(t)\|^{2}_{H^{N}_{x}L^{2}_{v}}\mathcal{D}_{N}(g(t)).
			\end{align*} 
			
			\noindent{Case (ii):}  $|\alpha'|,|\alpha''|\le N-1$.
			
			In this case, we continue to apply Lemma \ref{Sobolev inequalities} to conclude that
			\begin{align*}
				\left\|\|\partial^{\alpha'}g\|_{L^{2}_{v}} \|\partial^{\alpha''}g\|_{L^{2}_{\mathcal{D}}}\right\|^{2}_{L^2_{x}} 
				\lesssim&\|\partial^{\alpha'}g\|^{2}_{L^{3}_{x}L^{2}_{v}} \|\partial^{\alpha''}g\|^{2}_{L^{6}_{x}L^{2}_{\mathcal{D}}}\\
				\lesssim&\|\partial^{\alpha'}g\|^{2}_{H^{1}_{x}L^{2}_{v}} \|\nabla_x \partial^{\alpha''}g\|^{2}_{L^{2}_{x}L^{2}_{\mathcal{D}}}\\
				\lesssim&\|g(t)\|^{2}_{H^{N}_{x}L^{2}_{v}}\mathcal{D}_{N}(g(t)).
			\end{align*}

			Next, we proceed to estimate the $ L^{2}_{x} $ norm of \eqref{partial-T ek}. 
			Obviously, we note that
			\begin{align*}
				& |\langle\partial^{\alpha}T(g,g,g),e_{k} \rangle|\\ 
				\lesssim&\sum_{ |\alpha'|+|\alpha''|+|\alpha'''|\le N}^{}\min\{\|\partial^{\alpha'}g\|_{L^{2}_{v}} \| \partial^{\alpha''}g\|_{L^{2}_{v}}\| \partial^{\alpha'''}g\|_{L^{2}_{\mathcal{D}}},\\
				&\quad\quad\quad\quad \|\partial^{\alpha'}g\|_{L^{2}_{v}} \| \partial^{\alpha''}g\|_{L^{2}_{\mathcal{D}}}\| \partial^{\alpha'''}g\|_{L^{2}_{v}},\|\partial^{\alpha'}g\|_{L^{2}_{\mathcal{D}}} \| \partial^{\alpha''}g\|_{L^{2}_{v}}\| \partial^{\alpha'''}g\|_{L^{2}_{v}}\}.
			\end{align*}  
			Similarly, we consider the following two cases.
			
			\noindent{Case (i):}  $|\alpha'| = N$ or $|\alpha''| = N$ or $|\alpha'''| = N$.
			
			Without loss of generality, we assume $|\alpha'| = N$, which implies $|\alpha''| = |\alpha'''| = 0$. 
			In this scenario, we estimate $\partial^{\alpha'} g$ using the $L^2_{x,v}$-norm. 
			By applying  Lemma \ref{Sobolev inequalities}, we get
			\begin{align*}
				\left\|\|\partial^{\alpha'}g\|_{L^{2}_{v}} \| g\|_{L^{2}_{v}}\|g\|_{L^{2}_{\mathcal{D}}}\right\|^{2}_{L^2_{x}} 
				\lesssim&\|\partial^{\alpha'}g\|^{2}_{L^{2}_{x}L^{2}_{v}} \|g\|^{2}_{L^{\infty}_{x}L^{2}_{v}} \|g\|^{2}_{L^{\infty}_{x}L^{2}_{\mathcal{D}}}\\
				\lesssim&\|g\|^{2}_{H^{N}_{x}L^{2}_{v}} \|g\|^{2}_{H^{2}_{x}L^{2}_{v}}\|\nabla_xg\|^{2}_{H^{1}_{x}L^{2}_{\mathcal{D}}}\\
				\lesssim&\|g(t)\|^{4}_{H^{N}_{x}L^{2}_{v}}\mathcal{D}_{N}(g(t)).
			\end{align*} 
			
			\noindent{Case (ii):}  $|\alpha'|,|\alpha''|, |\alpha'''|\le N-1$. 
			
			Since \( N \geq 2 \), we observe that at least one of \( |\alpha'| \), \( |\alpha''| \), or \( |\alpha'''| \) must satisfy \( |\cdot| \leq N - 2 \). Otherwise, if \( |\alpha'| = |\alpha''| = |\alpha'''| = N - 1 \), then it would follow that  
			\[
			3N - 3 = |\alpha'| + |\alpha''| + |\alpha'''|  \leq N,
			\]  
			which contradicts the assumption that \( N \geq 2 \). 
			Without loss of generality, let us assume \( |\alpha'| \leq N - 2 \). Following analogous reasoning, we then deduce that
			\begin{align*}
				\left\|\|\partial^{\alpha'}g\|_{L^{2}_{v}} \| \partial^{\alpha''}g\|_{L^{2}_{v}}\| \partial^{\alpha'''}g\|_{L^{2}_{\mathcal{D}}}\right\|^{2}_{L^2_{x}} 
				\lesssim&\|\partial^{\alpha'}g\|^{2}_{L^{\infty}_{x}L^{2}_{v}} \|\partial^{\alpha''}g\|^{2}_{L^{3}_{x}L^{2}_{v}} \|\partial^{\alpha'''}g\|^{2}_{L^{6}_{x}L^{2}_{\mathcal{D}}}\\
				\lesssim&\|\partial^{\alpha'}g\|^{2}_{H^{2}_{x}L^{2}_{v}} \|\partial^{\alpha''}g\|^{2}_{H^{1}_{x}L^{2}_{v}}\|\nabla_x\partial^{\alpha'''}g\|^{2}_{L^{2}_{x}L^{2}_{\mathcal{D}}}\\
				\lesssim&\|g(t)\|^{4}_{H^{N}_{x}L^{2}_{v}}\mathcal{D}_{N}(g(t)).
			\end{align*} 
			
			Combining the above arguments, we conclude that for \( N\ge2 \),  
			\[  
			 \sum_{k=1}^{13}\|\langle G,e_{k}\rangle\|^{2}_{H^{N}_{x}}\lesssim (\|g(t)\|^{2}_{H^{N}_{x}L^{2}_{v}}+\|g(t)\|^{4}_{H^{N}_{x}L^{2}_{v}})\mathcal{D}_{N}(g(t)).
			\]  
			Therefore, the lemma is proved.

		\end{proof}

		\paragraph{4.3. Proof of Theorem \ref{main_theorem}.}
		
		Theorem \ref{local existence} indicates that the local solution can be extended to a global one via a bootstrap argument, provided that a globally sufficiently small estimate for the energy functional \(\|g(t)\|_{H^N_x L^2_v}\) is established. However, unlike the dissipation rate \(\|g(t)\|_{H^N_x L^2_\mathcal{D}}\) in Theorem \ref{local existence}, the dissipation \(\mathcal{D}_{N}(g(t))\) in \eqref{dissipation rate}—used to close the global estimates—lacks derivatives of order zero for the macroscopic components. This necessitates a more delicate analysis when estimating the nonlinear terms with this dissipation.

		We now proceed to derive a priori estimates for the $L^2$ energy functional. Applying the spatial derivative operator $\partial^\alpha$ to equation \eqref{BT-g}, multiplying both sides by $\partial^\alpha g$, and integrating over the variables $v$ and $x$, we obtain, upon summing over all multi-indices $\alpha$ with $|\alpha| \leq N$ and \(N\ge2\), the following identity: 
		\small
		\begin{equation}\label{energy identity}
			\frac{1}{2} \frac{d}{dt} \| g(t) \|^2_{H^N_x L^2_v } + \sum_{|\alpha| \leq N} \left( L \partial^\alpha g, \partial^\alpha g \right) = \sum_{|\alpha| \leq N} \left( \partial^\alpha \left( \Gamma_{B}(g, g)+\Gamma_{T}(g, g) + T(g, g, g) \right), \partial^\alpha g \right).
		\end{equation}
	   \normalsize 
	   Based on the methodologies and procedures outlined in references \cite{MR2095473,MR2420519}  concerning the binary Boltzmann equation, we similarly deduce that 
		\begin{equation}\label{Binary nonlinear estimate}
			\sum_{|\alpha| \leq N} \left|\left( \partial^\alpha \Gamma_{B}(g, g), \partial^\alpha g \right)\right|\lesssim \|g(t)\|_{H^{N}_{x}L^{2}_{v}} \mathcal{D}_{N}(g(t)).
		\end{equation}
		Therefore,   we proceed to estimate  
		  \[\sum_{|\alpha| \leq N}\left( \partial^\alpha \left( \Gamma_T(g, g) + T(g, g, g) \right), \partial^\alpha g \right). \] 
		On the one hand, we analyze the case \( |\alpha| = 0 \). 
		By Propositions \ref{prop:LT_properties} and \ref{prop:conservation}, we observe that
		\begin{align*}
			&\left\langle\Gamma_{{T}}(g,g)+ T(g,g,g),g_{1}\right\rangle\\
			=&\left\langle L_{T}g,g_{1}\right\rangle+\left\langle \mu^{-\frac{1}{2}} Q_{T}(f,f,f),g_{1}\right\rangle\\
			=&\left\langle g,L_{T}g_{1}\right\rangle+\left\langle  Q_{T}(f,f,f), a + \mathbf{b} \cdot v + c|v|^2\right\rangle\\
			=&0,
		\end{align*}
		from which we conclude that
		\begin{align*}
			&|\left\langle\Gamma_{{T}}(g,g)+ T(g,g,g),g\right\rangle|\\[1mm]
			\lesssim&|\left\langle\Gamma_{{T}}(g,g),g_{2}\right\rangle|+|\left\langle T(g,g,g),g_{2}\right\rangle|\\[1mm]
			\lesssim&\|g\|_{L^{2}_{v}}(1+\|g\|_{L^{2}_{v}})\|g\|_{L^{2}_{\mathcal{D}}}\|g_{2}\|_{L^{2}_{\mathcal{D}}} 
		\end{align*}
		by applying Lemma \ref{Estimates of T} and Corollary \ref{Estimates of GammaT}.
		Thus, we have
		\begin{align*} 
			&\left|\left(\Gamma_{{T}}(g,g)+T(g,g,g),g \right)\right|\\[1mm]
			\lesssim&\int_{\mathbb{R}^3}\left|\left\langle\Gamma_{{T}}(g,g)+T(g,g,g),g\right\rangle \right|dx\\[1mm]
			\lesssim&\int_{\mathbb{R}^3}\|g\|_{L^{2}_{v}}\|g\|_{L^{2}_{\mathcal{D}}}\|g_{2}\|_{L^{2}_{\mathcal{D}}} dx+\int_{\mathbb{R}^3}\|g\|_{L^{2}_{v}}\|g\|_{L^{2}_{v}}\|g\|_{L^{2}_{\mathcal{D}}}\|g_{2}\|_{L^{2}_{\mathcal{D}}} dx\\
			\lesssim&\|g\|_{L^{3}_{x}L^{2}_{v}}\|g\|_{L^{6}_{x}L^{2}_{\mathcal{D}}}\|g_{2}\|_{L^{2}_{x}L^{2}_{\mathcal{D}}}+\|g\|_{L^{\infty}_{x}L^{2}_{v}}\|g\|_{L^{3}_{x}L^{2}_{v}}\|g\|_{L^{6}_{x}L^{2}_{\mathcal{D}}}\|g_{2}\|_{L^{2}_{x}L^{2}_{\mathcal{D}}}\\[1mm]
			\lesssim&\|g\|_{H^{1}_{x}L^{2}_{v}}\|\nabla_xg\|_{L^{2}_{x}L^{2}_{\mathcal{D}}}\|g_{2}\|_{L^{2}_{x}L^{2}_{\mathcal{D}}}+\|g\|_{H^{2}_{x}L^{2}_{v}}\|g\|_{H^{1}_{x}L^{2}_{v}}\|\nabla_xg\|_{L^{2}_{x}L^{2}_{\mathcal{D}}}\|g_{2}\|_{L^{2}_{x}L^{2}_{\mathcal{D}}}\\[1mm]
			\lesssim&(\|g(t)\|_{H^{N}_{x}L^{2}_{v}}+\|g(t)\|^{2}_{H^{N}_{x}L^{2}_{v}})\mathcal{D}_{N}(g(t)).
		\end{align*}
		On the other hand, we estimate the following sum 
		\[
		\sum_{1 \leq |\alpha| \leq N} \left|\left(\partial^\alpha \big( \Gamma_T(g, g) + T(g, g, g) \big), \partial^\alpha g \right)\right|.
		\] 
		Based on Lemma \ref{Estimates of T} and Corollary \ref{Estimates of GammaT}, we deduce that
		\begin{align*} 
			&\sum_{1 \leq |\alpha| \leq N} \left|\left(\partial^\alpha \big( \Gamma_T(g, g) + T(g, g, g) \big), \partial^\alpha g \right)\right|\\[1mm]
			\lesssim&\sum_{\substack{|\alpha'|+|\alpha''|\le N  \\  1 \leq |\alpha| \leq N} }^{}\int_{\mathbb{R}^3}\left|\left\langle\Gamma_{{T}}(\partial^{\alpha'}g,\partial^{\alpha''}g),\partial^{\alpha}g\right\rangle\right|dx\\[1mm]
			&+\sum_{\substack{|\alpha'|+|\alpha''|+|\alpha'''|\le N  \\  1 \leq |\alpha| \leq N} }^{}\int_{\mathbb{R}^3}\left|\left\langle T(\partial^{\alpha'}g,\partial^{\alpha''}g,\partial^{\alpha'''}g),\partial^{\alpha}g\right\rangle\right|dx\\[1mm]
			\lesssim&\sum_{\substack{|\alpha'|+|\alpha''|\le N  \\  1 \leq |\alpha| \leq N} }^{}\int_{\mathbb{R}^3}\|\partial^{\alpha'}g\|_{L^{2}_{v}}\|\partial^{\alpha''}g\|_{L^{2}_{\mathcal{D}}}\|\partial^{\alpha}g\|_{L^{2}_{\mathcal{D}}}dx\\[1mm]
			&+\sum_{\substack{|\alpha'|+|\alpha''|+|\alpha'''|\le N  \\  1 \leq |\alpha| \leq N} }^{}\int_{\mathbb{R}^3}\|\partial^{\alpha'}g\|_{L^{2}_{v}}\|\partial^{\alpha''}g\|_{L^{2}_{v}}\|\partial^{\alpha'''}g\|_{L^{2}_{\mathcal{D}}}\|\partial^{\alpha}g\|_{L^{2}_{\mathcal{D}}}dx\\[1mm]
			\lesssim&\sum_{|\alpha'|+|\alpha''|\le N }^{} \left\|\|\partial^{\alpha'}g\|_{L^{2}_{v}}\|\partial^{\alpha''}g\|_{L^{2}_{\mathcal{D}}}\right\|_{L^{2}_{x}}\sqrt{\mathcal{D}_{N}(g(t))} \\[1mm]
			&+\sum_{|\alpha'|+|\alpha''|+|\alpha'''|\le N }^{}\left\|\|\partial^{\alpha'}g\|_{L^{2}_{v}}\|\partial^{\alpha''}g\|_{L^{2}_{v}}\|\partial^{\alpha'''}g\|_{L^{2}_{\mathcal{D}}}\right\|_{L^{2}_{x}}\sqrt{\mathcal{D}_{N}(g(t))}
		\end{align*}
	    Following the case analysis framework analogous to the proof of Lemma \ref{Macro estimate}, we employ the Sobolev inequalities  in Lemma \ref{Sobolev inequalities} to derive that
	\begin{align*} 
		  &\sum_{|\alpha'|+|\alpha''|\le N }^{} \left\|\|\partial^{\alpha'}g\|_{L^{2}_{v}}\|\partial^{\alpha''}g\|_{L^{2}_{\mathcal{D}}}\right\|_{L^{2}_{x}} \\[1mm] 
		\lesssim&\sum_{|\alpha'|=N }^{} \left\|\|\partial^{\alpha'}g\|_{L^{2}_{v}}\| g\|_{L^{2}_{\mathcal{D}}}\right\|_{L^{2}_{x}}+\sum_{|\alpha''|=N }^{} \left\|\| g\|_{L^{2}_{v}}\|\partial^{\alpha''}g\|_{L^{2}_{\mathcal{D}}}\right\|_{L^{2}_{x}}\\[1mm] 
		&+\sum_{|\alpha'|,|\alpha''|\le N-1 }^{} \left\|\|\partial^{\alpha'}g\|_{L^{2}_{v}}\|\partial^{\alpha''}g\|_{L^{2}_{\mathcal{D}}}\right\|_{L^{2}_{x}}\\[1mm] 
		\lesssim&\sum_{|\alpha'|=N }^{} \|\partial^{\alpha'}g\|_{L^{2}_{x}L^{2}_{v}}\| g\|_{L^{\infty}_{x}L^{2}_{\mathcal{D}}}  +\| g\|_{L^{\infty}_{x}L^{2}_{v}}\sum_{|\alpha''|=N }^{}\|\partial^{\alpha''}g\|_{L^{2}_{x}L^{2}_{\mathcal{D}}}  \\[1mm] 
		&+\sum_{|\alpha'|,|\alpha''|\le N-1 }^{}\|\partial^{\alpha'}g\|_{L^{3}_{x}L^{2}_{v}}\|\partial^{\alpha''}g\|_{L^{6}_{x}L^{2}_{\mathcal{D}}} \\[1mm] 
		\lesssim& \| g\|_{H^{N}_{x}L^{2}_{v}}\| \nabla_xg\|_{H^{1}_{x}L^{2}_{\mathcal{D}}}+\| g\|_{H^{2}_{x}L^{2}_{v}}\|  g\|_{\dot{H}^{N}_{x}L^{2}_{\mathcal{D}}}+ \| g\|_{H^{N}_{x}L^{2}_{v}}\| \nabla_xg\|_{H^{N-1}_{x}L^{2}_{\mathcal{D}}} \\[1mm] 
		\lesssim&\|g(t)\|_{H^{N}_{x}L^{2}_{v}}\sqrt{\mathcal{D}_{N}(g(t))},
	\end{align*}
and
    \begin{align*} 
    	&\sum_{|\alpha'|+|\alpha''|+|\alpha'''|\le N }^{}\left\|\|\partial^{\alpha'}g\|_{L^{2}_{v}}\|\partial^{\alpha''}g\|_{L^{2}_{v}}\|\partial^{\alpha'''}g\|_{L^{2}_{\mathcal{D}}}\right\|_{L^{2}_{x}} \nonumber\\[1mm] 
    	\lesssim&\sum_{|\alpha'|=N }^{} \left\|\|\partial^{\alpha'}g\|_{L^{2}_{v}}\|g\|_{L^{2}_{v}}\| g\|_{L^{2}_{\mathcal{D}}}\right\|_{L^{2}_{x}}+\sum_{|\alpha'''|=N }^{} \left\|\| g\|_{L^{2}_{v}}\| g\|_{L^{2}_{v}}\|\partial^{\alpha'''}g\|_{L^{2}_{\mathcal{D}}}\right\|_{L^{2}_{x}}\nonumber\\[1mm] 
    	&+\sum_{\substack{|\alpha''|,|\alpha'''|\le N-1  \\ |\alpha'| \leq N-2} }^{} \left\|\|\partial^{\alpha'}g\|_{L^{2}_{v}}\|\partial^{\alpha''}g\|_{L^{2}_{v}}\|\partial^{\alpha'''}g\|_{L^{2}_{\mathcal{D}}}\right\|_{L^{2}_{x}}\nonumber\\[1mm] 
    	&+\sum_{\substack{|\alpha'|,|\alpha''|\le N-1  \\ |\alpha'''| \leq N-2} }^{} \left\|\|\partial^{\alpha'}g\|_{L^{2}_{v}}\|\partial^{\alpha''}g\|_{L^{2}_{v}}\|\partial^{\alpha'''}g\|_{L^{2}_{\mathcal{D}}}\right\|_{L^{2}_{x}}\nonumber\\[1mm] 
    	\lesssim&\sum_{|\alpha'|=N }^{} \|\partial^{\alpha'}g\|_{L^{2}_{x}L^{2}_{v}}\| g\|_{L^{\infty}_{x}L^{2}_{v}}\| g\|_{L^{\infty}_{x}L^{2}_{\mathcal{D}}}  +\| g\|^{2}_{L^{\infty}_{x}L^{2}_{v}}\sum_{|\alpha'''|=N }^{}\|\partial^{\alpha'''}g\|_{L^{2}_{x}L^{2}_{\mathcal{D}}}  \nonumber\\[1mm] 
    	&+\sum_{|\alpha'| \le N-2 }^{}\|\partial^{\alpha '}g\|_{L^{\infty}_{x}L^{2}_{v}}\sum_{|\alpha''|,|\alpha'''|\le N-1 }^{}\|\partial^{\alpha''}g\|_{L^{3}_{x}L^{2}_{v}}\|\partial^{\alpha'''}g\|_{L^{6}_{x}L^{2}_{\mathcal{D}}} \nonumber\\[1mm] 
    	&+\sum_{|\alpha'|,|\alpha''|\le N-1 }^{}\|\partial^{\alpha'}g\|_{L^{3}_{x}L^{2}_{v}}\|\partial^{\alpha''}g\|_{L^{6}_{x}L^{2}_{v}}\sum_{|\alpha'''| \le N-2 }^{}\|\partial^{\alpha'''}g\|_{L^{\infty}_{x}L^{2}_{\mathcal{D}}} \nonumber\\[1mm] 
    	\lesssim& \| g\|_{H^{N}_{x}L^{2}_{v}}\| g\|_{H^{2}_{x}L^{2}_{v}}  \| \nabla_xg\|_{H^{1}_{x}L^{2}_{\mathcal{D}}}+\| g\|^{2}_{H^{2}_{x}L^{2}_{v}}\|  g\|_{\dot{H}^{N}_{x}L^{2}_{\mathcal{D}}}+ \| g\|^{2}_{H^{N}_{x}L^{2}_{v}}\| \nabla_xg\|_{H^{N-1}_{x}L^{2}_{\mathcal{D}}} \nonumber\\[1mm]  
    	\lesssim&\|g(t)\|^{2}_{H^{N}_{x}L^{2}_{v}}\sqrt{\mathcal{D}_{N}(g(t))}. \nonumber
    \end{align*}
	In summary, we obtain
	\small
	\begin{equation}\label{Ternary nonlinear estimate}
		\sum_{ |\alpha| \leq N} \left|\left(\partial^\alpha \big( \Gamma_T(g, g) + T(g, g, g) \big), \partial^\alpha g \right)\right|\lesssim \left(\|g(t)\|_{H^{N}_{x}L^{2}_{v}}+\|g(t)\|^{2}_{H^{N}_{x}L^{2}_{v}}\right) {\mathcal{D}_{N}(g(t))}. 
	\end{equation}
	\normalsize
	Combining Proposition \ref{prop:L_properties} with \eqref{energy identity}, \eqref{Binary nonlinear estimate}, and \eqref{Ternary nonlinear estimate}, we conclude that
	\small
	\begin{equation}\label{energy estimate 1}
		 \frac{1}{2} \frac{d}{dt} \| g(t) \|^2_{H^N_x L^2_v } +\delta_{0}\|g_{2}(t)\|^{2}_{H^{N}_{x}L^{2}_{\mathcal{D}}} \lesssim \left(\|g(t)\|_{H^{N}_{x}L^{2}_{v}}+\|g(t)\|^{2}_{H^{N}_{x}L^{2}_{v}}\right) {\mathcal{D}_{N}(g(t))}. 
	\end{equation}
	\normalsize	
	We now choose a positive constant $\varepsilon$ such that $\varepsilon < \min\left\{ \frac{1}{4C_0}, \frac{\delta_0}{4C_1} \right\}$, where the constants $C_0$ and $C_1$ are specified in Lemma \ref{Macro estimate}. 
    Subsequently, by combining equation \eqref{energy estimate 1} with $\varepsilon\times$\eqref{Macro estimate 1}, we can identify a constant $\delta_{1} > 0$ such that 
	\begin{align}\label{energy estimate 2}
		\begin{aligned}
			\frac{1}{2} \frac{d}{dt} &\left(\| g(t) \|^2_{H^N_x L^2_v }+2\varepsilon\mathcal{I}(t)\right)+\delta_{1}\left(\|\nabla_x[a,\mathbf{b},c](t)\|^{2}_{H^{N-1}_{x}}+\|g_{2}(t)\|^{2}_{H^{N}_{x}L^{2}_{\mathcal{D}}} \right)\\  
			\lesssim& \left(\|g(t)\|_{H^{N}_{x}L^{2}_{v}}+\|g(t)\|^{2}_{H^{N}_{x}L^{2}_{v}}+\|g(t)\|^{4}_{H^{N}_{x}L^{2}_{v}}\right) {\mathcal{D}_{N}(g(t))}.
		\end{aligned} 
	\end{align} 
	\normalsize
	We proceed to define 
	\[\mathcal{E}_{N}(g(t)):=\| g(t) \|^2_{H^N_x L^2_v }+2\varepsilon\mathcal{I}(t).\]
	Observe that the selection of \( \varepsilon \) ensures  
	\[\frac{1}{2}\| g(t) \|^2_{H^N_x L^2_v }\le\mathcal{E}_{N}(g(t))\le\frac{3}{2}\| g(t) \|^2_{H^N_x L^2_v }.\]
	Consequently, by synthesizing equation \eqref{energy estimate 2} with the definition of the dissipation functional \( \mathcal{D}_N \) in \eqref{dissipation rate}, we deduce the existence of a constant \( C_2 > 0 \) such that the following inequality holds: 
	\small  
	 \begin{align}\label{energy estimate 3} 
	 	\begin{aligned}
	 		\frac{1}{2} \frac{d}{dt}  \mathcal{E}_{N}(g(t))+\delta_{1}\mathcal{D}_{N}(g(t))\le C_2 \left(\sqrt{\mathcal{E}_{N}(g(t))}+\mathcal{E}_{N}(g(t))+\mathcal{E}^{2}_{N}(g(t))\right) {\mathcal{D}_{N}(g(t))}.
	 	\end{aligned} 
	 \end{align}  
 \normalsize
 
 Finally, we prove global existence by utilizing the standard bootstrap argument.  
 For this purpose, we choose 
 $ 0 < M \le \min \{{M_0},  \frac{\delta_{1}^{2}}{54C_{2}^{2}} ,\frac{2}{3}\}   ,$
 where \( M_0 \) is the constant  given in Theorem \ref{local existence}.
 Let $T_{\max}$ be the maximal existence time of the local solution. We define 
 \begin{equation}\label{bootstrap assumption}
 	T_{\text{boot}}:=\sup\left\{0<T<T_{\max}:\sup_{0\le t\le T} \| g(t) \|^2_{H^N_x L^2_v }\le M\right\}. 
 \end{equation} 
 Now, we choose \( \epsilon \) in Theorem \ref{main_theorem} such that
 $ 0 < \epsilon \le \min \{\frac{M}{4},1\}.$
 Furthermore, from Theorem \ref{local existence}, we know that \( T_{\text{boot}} > 0 \) and that the following estimates hold for $0\le t<T_{\text{boot}}$.
 By \eqref{energy estimate 3} and the choice of \( M\), it is straightforward to infer that
 \begin{equation}\label{L2-energy-estimate}
 	\frac{d}{dt}\mathcal{E}_{N}(g(t))+ \delta_{1}\mathcal{D}_{N}(g(t)) \le0, 
 \end{equation}  
 For every $0<T<T_{\text{boot}}$, integration yields
 $$\sup_{0\le t\le T} \| g(t) \|^2_{H^N_x L^2_v }\le2\sup_{[0,T]}\mathcal{E}_{N}(g(t)) \le2\mathcal{E}_{N}(g_{0})\le 3 \| g_{0} \|^2_{H^N_x L^2_v }\le 3\epsilon<M.$$ 
	The strict improvement of the bootstrap bound and the uniform local existence time in Theorem \ref{local existence} exclude a finite bootstrap or maximal existence time. Thus $T_{\text{boot}}=T_{\max}=\infty$. Integrating \eqref{L2-energy-estimate} also gives the asserted bound on the time integral of $\mathcal{D}_N$. This establishes the existence of a global solution to the Cauchy problem \eqref{BT-g}, thereby completing the proof of Theorem \ref{main_theorem}.

		\begin{appendices}
			\section{Appendix}\label{sec;app}

		   \paragraph{A.1. Properties of the symmetrized ternary Boltzmann opeartor $Q_{T}$.}\label{A.1.}
		 
	This subsection summarizes several key properties of the ternary Boltzmann collision operator \(Q_T\), which parallel those of the binary operator \(Q_B\). These results are reproduced directly from Section 5.4 of  \cite{MR4337060}, where complete proofs and detailed explanations can be found.
		  
		   \begin{proposition}[Weak Formulation and Entropy Dissipation]\label{prop:entropy}
		   	Let \( f: [0, \infty) \times \mathbb{R}^6 \to \mathbb{R} \). The following properties hold:
		   	\begin{enumerate}
		   		\item[(\romannumeral 1)] For any test function \( \phi: [0, \infty) \times \mathbb{R}^6 \to \mathbb{R} \), the weak formulation of the ternary operator \( Q_{T} \) is given by:
		   		\begin{align*}
		   			\int_{\mathbb{R}^3} Q_{T}(f, f, f) \phi \, dv = \frac{1}{2} &\int_{S^5 \times \mathbb{R}^9} B_3(\bm{u}, \bm{\omega}) \left( f^* f_1^* f_2^* - f f_1 f_2 \right) \\
		   			&\quad\times(\phi + \phi_1 + \phi_2 - \phi^* - \phi_1^* - \phi_2^*) \, d\bm{\omega} dv_{1}dv_{2} dv. 
		   		\end{align*}  
		   		\item[(\romannumeral 2)] If \( f > 0 \), the entropy dissipation satisfies:
		   		\[
		   		D_{T}(f) := \int_{\mathbb{R}^3} Q_{T}(f, f, f) \ln f \, dv \leq 0.
		   		\]  
		   	\end{enumerate}      
		   \end{proposition}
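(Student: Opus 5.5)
The plan is to reduce both parts to one symmetrization identity for the central-collision integral. The adjacent integral is first rewritten as a central one by relabelling particles, and then the involution $T_{1,\bm{\omega}}$ produces the usual antisymmetric combination. Throughout I assume $f$ and $\phi$ are regular enough, with enough decay in $v$, that every integral below converges absolutely, so that Fubini and the changes of variables are legitimate. The whole argument takes place at fixed $(t,x)$, which I suppress from the notation.

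\emph{Step 1 (central term).} Set
$$J_1:=\int_{\mathbb{S}^5\times\mathbb{R}^9}B_3(\bm{u},\bm{\omega})\,(f^*f_1^*f_2^*-ff_1f_2)\,\phi\,d\bm{\omega}\,dv_1\,dv_2\,dv.$$
For fixed $\bm{\omega}$ I change variables by $T_{1,\bm{\omega}}$. Three facts make this work.
\begin{itemize}
\item The map is linear and volume preserving, with $T_{1,\bm{\omega}}^{-1}=T_{1,\bm{\omega}}$.
\item By \eqref{invariance of the collision kernel}, $B_3(\bm{u}^*,\bm{\omega})=B_3(\bm{u},\bm{\omega})$.
\item Because $T_{1,\bm{\omega}}$ is an involution, the starred and unstarred triples are exchanged.
\end{itemize}
Together these give
$$J_1=-\int B_3(\bm{u},\bm{\omega})\,(f^*f_1^*f_2^*-ff_1f_2)\,\phi^*\,d\bm{\omega}\,dv_1\,dv_2\,dv.$$
Averaging the two expressions yields
$$J_1=\tfrac12\int B_3(\bm{u},\bm{\omega})\,(f^*f_1^*f_2^*-ff_1f_2)\,(\phi-\phi^*).$$

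\emph{Step 2 (adjacent term).} Set
$$J_2:=2\int B_3(\bm{u}_1,\bm{\omega})\,(f^{1*}f_1^{1*}f_2^{1*}-ff_1f_2)\,\phi\,d\bm{\omega}\,dv_1\,dv_2\,dv.$$
\begin{itemize}
\item Swap $v\leftrightarrow v_1$. This sends $\bm{u}_1=(v-v_1,\,v_2-v_1)$ to $\bm{u}=(v_1-v,\,v_2-v)$. Comparing \eqref{ternary collision law 2} with \eqref{ternary collision law 1}, it also sends $(v^{1*},v_1^{1*},v_2^{1*})$ to $(v_1^*,v^*,v_2^*)$. The triple product of $f$ is symmetric, so it is unchanged, and therefore
$$J_2=2\int B_3(\bm{u},\bm{\omega})\,(f^*f_1^*f_2^*-ff_1f_2)\,\phi_1.$$
\item Next apply $(\omega_1,\omega_2,v_1,v_2)\mapsto(\omega_2,\omega_1,v_2,v_1)$. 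This map preserves $\bar{\bm{u}}\cdot\bm{\omega}$, $\omega_1\cdot\omega_2$, the measure, and the central collision law up to exchanging the labels $1$ and $2$. So the same integral with $\phi_2$ in place of $\phi_1$ has the same value, and averaging gives
$$J_2=\int B_3(\bm{u},\bm{\omega})\,(f^*f_1^*f_2^*-ff_1f_2)\,(\phi_1+\phi_2).$$
\item Finally, apply $T_{1,\bm{\omega}}$ exactly as in Step 1. This gives
$$J_2=\tfrac12\int B_3(\bm{u},\bm{\omega})\,(f^*f_1^*f_2^*-ff_1f_2)\,(\phi_1+\phi_2-\phi_1^*-\phi_2^*).$$
\end{itemize}
Adding $J_1$ and $J_2$ gives (i).

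I expect Step 2 to be the main obstacle. The delicate point is checking that the relabelling really maps the adjacent collision law onto the central one with the correct particle roles, and that the angular factor $b_3$ is invariant under each substitution. I would verify this directly from the explicit formulas \eqref{ternary collision law 1}--\eqref{ternary collision law 2}, in the same way as in the proof of Proposition \ref{prop:LT_properties}.

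\emph{Step 3 (entropy).} For $f>0$ I take $\phi=\ln f$ in (i). Then
$$\phi+\phi_1+\phi_2-\phi^*-\phi_1^*-\phi_2^*=\ln\frac{ff_1f_2}{f^*f_1^*f_2^*}.$$
Hence the integrand of (i) equals $B_3\,(y-x)\ln(x/y)$ with $x=ff_1f_2>0$ and $y=f^*f_1^*f_2^*>0$. The map $\ln$ is increasing, so $(y-x)\ln(x/y)\le0$ pointwise. Since $B_3\ge0$, integrating gives $D_T(f)\le0$. As in Step 1, this assumes the integral is finite, which holds under the regularity and decay hypotheses stated above.
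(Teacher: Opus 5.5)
Your proposal is correct and follows essentially the same route the paper relies on. The paper does not reprove this proposition but defers it to Section 5.4 of \cite{MR4337060}, and it uses the identical chain of substitutions, $(v,v_1)\mapsto(v_1,v)$, then $(\omega_1,\omega_2,v_1,v_2)\mapsto(\omega_2,\omega_1,v_2,v_1)$, then the involution $T_{1,\bm{\omega}}$, in its proof of Proposition~\ref{prop:LT_properties}. Your check that the adjacent law maps onto the central one as $(v^{1*},v_1^{1*},v_2^{1*})\mapsto(v_1^*,v^*,v_2^*)$ is right, and so is the pointwise inequality $(y-x)\ln(x/y)\le 0$ used for the entropy part.
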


		   \begin{proposition}[Characterization of Ternary Collision Invariants]\label{prop:collision_invariant}
		   	Let \( \phi: \mathbb{R}^3 \to \mathbb{R} \) be a continuous function. Then \( \phi \) is a collision invariant for ternary interactions, i.e.,  
		   	\[
		   	 \phi(v) + \phi(v_1) + \phi(v_2)=\phi(v^*) + \phi(v_1^*) + \phi(v_2^*),  \quad \forall \, (\omega_1, \omega_2, v, v_1, v_2) \in \mathbb{S}^{5} \times \mathbb{R}^{9},
		   	\]  
		   	if and only if \( \phi \) has the form:  
		   	\[
		   	\phi(v) = a +  \mathbf{b}\cdot v  + c|v|^2, \quad v \in \mathbb{R}^3,
		   	\]  
		   	where \( a, c \in \mathbb{R} \) and \( \mathbf{b} \in \mathbb{R}^3 \). 
		   \end{proposition}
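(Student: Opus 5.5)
This is the classical characterization of collision invariants, now for the ternary law \eqref{ternary collision law 1}. I would prove the two directions separately, with the reverse direction being routine and the forward direction carrying all the work.

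\emph{Sufficiency.} If $\phi=a+\mathbf{b}\cdot v+c|v|^2$, then the required identity
\[
\phi(v)+\phi(v_1)+\phi(v_2)=\phi(v^*)+\phi(v_1^*)+\phi(v_2^*)
\]
follows term by term. The constant part gives $3a$ on both sides, and the linear and quadratic parts match by momentum and energy conservation \eqref{t conservation}.

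\emph{Necessity.} The plan is to reduce to the binary Cauchy-type functional equation.

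The key step is to pick special impact directions that freeze one particle. I would take $\bm{\omega}=(\omega_1,\omega_2)$ with $\omega_2=0$, which is the degenerate limit on $\mathbb{S}^5$, i.e.\ $|\omega_1|=1$.
\begin{itemize}
\item Then $1+\omega_1\cdot\omega_2=1$ and $\bm{u}\cdot\bm{\omega}=(v_1-v)\cdot\omega_1$.
\item The law \eqref{ternary collision law 1} becomes $v^*=v+((v_1-v)\cdot\omega_1)\omega_1$, $v_1^*=v_1-((v_1-v)\cdot\omega_1)\omega_1$, and $v_2^*=v_2$.
\item This is exactly the binary law \eqref{binary collision law} for the pair $(v,v_1)$.
\end{itemize}
Since $\phi$ is continuous and the invariance identity holds for all $\bm{\omega}\in\mathbb{S}^5$, I would either plug in this point directly or pass to the limit along $\omega_2\to0$ on the sphere. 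In either case the $\phi(v_2)$ terms cancel, leaving
\[
\phi(v)+\phi(v_1)=\phi(v')+\phi(v_1')\quad\text{for all } v,v_1\in\mathbb{R}^3,\ \omega\in\mathbb{S}^2 .
\]
So $\phi$ is a binary collision invariant.

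It then remains to invoke the classical theorem: a continuous binary collision invariant has the form $a+\mathbf{b}\cdot v+c|v|^2$ (Boltzmann, Grad; Cercignani, Glassey \cite{Glassey1996}). If a self-contained argument is wanted, I would follow the standard route.
\begin{itemize}
\item Split $\phi$ into its even and odd parts.
\item Show that $\phi(v)+\phi(v_1)$ depends only on $v+v_1$ and $|v|^2+|v_1|^2$, since any two pairs with the same momentum and energy are connected by some $\omega$.
\item Deduce an additive (Cauchy) equation on orthogonal vectors.
\item Use continuity to conclude linearity in $v$ and in $|v|^2$.
\end{itemize}

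\emph{Main obstacle.} The delicate point is the admissibility of $\omega_2=0$. Formally $\bm{\omega}=(\omega_1,0)$ with $|\omega_1|=1$ lies on $\mathbb{S}^5$, and $1+\omega_1\cdot\omega_2=1\neq0$, so the law is well defined there and no limit is even needed. If the statement is instead read as restricted to the support of $b_3$, I would argue by continuity of $\phi$ and of the map $\bm{\omega}\mapsto T_{1,\bm{\omega}}$ near such points.

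If one wishes to avoid degenerate directions altogether, the fallback is to work with general $\bm{\omega}$. There, for fixed total momentum and energy, the set of reachable triples $(v^*,v_1^*,v_2^*)$ covers the full energy-momentum manifold, and the same characterization argument applies in nine dimensions. That route is heavier, so I would use the reduction above.
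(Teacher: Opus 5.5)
Your proof is correct. The paper gives no argument of its own for this proposition: like the rest of Appendix A.1, it is reproduced from Section 5.4 of \cite{MR4337060}. Your reduction therefore serves as a self-contained replacement for that citation, not as a variant of an in-paper proof.

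The key step needs no limiting procedure. On $\mathbb{S}^5$ one has $|\omega_1\cdot\omega_2|\le\frac12(|\omega_1|^2+|\omega_2|^2)=\frac12$, so the central law \eqref{ternary collision law 1} is defined at every $\bm{\omega}$. In particular it is defined at $\bm{\omega}=(\omega_1,0)$ with $\omega_1\in\mathbb{S}^2$, where $T_{1,\bm{\omega}}$ acts on $(v,v_1)$ as the binary involution $T_{\omega_1}$ and fixes $v_2$. Cancelling $\phi(v_2)$ then shows that every ternary collision invariant is a binary one. Continuity of $\phi$ is used only through the classical binary characterization.

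What your approach buys is that the ternary problem needs no new functional-equation analysis. For the same reason, the nine-dimensional fallback you mention is unnecessary; you also do not prove its claim that the reachable triples fill the energy--momentum manifold.

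Your contingency for invariance assumed only on the support of $b_3$ would not work by continuity alone. For general $b_3$, that support need not accumulate at $\{\omega_2=0\}$. However, the statement quantifies over all of $\mathbb{S}^5$, so this case never arises.
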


		   \begin{proposition}[Characterization of Equilibrium]\label{prop:equilibrium}
		   	Let \( f: [0, \infty) \times \mathbb{R}^6 \to (0, \infty) \) be positive and velocity-decreasing. Then:
		   	\begin{enumerate}
		   		\item[(\romannumeral 1)] \( D_{T}(f) = 0 \) if and only if \( f \) is a Maxwellian of the form:
		   		\[
		   		f(t, x, v) = \frac{R(t, x)}{(2\pi T(t, x))^{3/2}} e^{-\frac{|v - U(t, x)|^2}{2T(t, x)}},
		   		\]
		   		where \( R, T: [0, \infty) \times \mathbb{R}^3 \to (0, \infty) \) and \( U: [0, \infty) \times \mathbb{R}^3 \to \mathbb{R}^3 \) are continuous functions.
		   		\item[(\romannumeral 2)] \( Q_{T}(f, f, f) = 0 \) if and only if \( f \) is a Maxwellian.
		   	\end{enumerate}       
		   \end{proposition}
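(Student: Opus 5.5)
The plan is to deduce both characterizations from the weak formulation in Proposition \ref{prop:entropy}(i) together with the characterization of collision invariants in Proposition \ref{prop:collision_invariant}. This mirrors the classical binary argument, with the ternary involution $T_{1,\bm{\omega}}$ playing the role of $T_\omega$. I work at fixed $(t,x)$ throughout, so $R,T,U$ will be obtained pointwise. I then argue their continuity from the continuity of $f$, since they are given by continuous moment expressions and by the coefficients of $\ln f$.

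For (i), I would take $\phi=\ln f$ in Proposition \ref{prop:entropy}(i). This is legitimate for $f>0$ with enough velocity decay that the integrals converge, which is what the ``velocity-decreasing'' hypothesis is meant to guarantee. It gives
\[
D_T(f)=-\frac12\int_{\mathbb{S}^5\times\mathbb{R}^9}B_3(\bm{u},\bm{\omega})\bigl(f^*f_1^*f_2^*-ff_1f_2\bigr)\bigl(\ln(f^*f_1^*f_2^*)-\ln(ff_1f_2)\bigr)\,d\bm{\omega}\,dv_1\,dv_2\,dv .
\]
Since $(X-Y)(\ln X-\ln Y)\ge0$ with equality iff $X=Y$, the condition $D_T(f)=0$ forces $f^*f_1^*f_2^*=ff_1f_2$ almost everywhere on $\{B_3>0\}$. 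Equivalently, $\psi:=\ln f$ satisfies $\psi+\psi_1+\psi_2=\psi^*+\psi_1^*+\psi_2^*$ there.

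The main obstacle is upgrading this a.e. identity on the support of $B_3$ to the identity for all $(\bm{\omega},v,v_1,v_2)$, which is what Proposition \ref{prop:collision_invariant} requires. My first attempt uses two facts. First, both sides are continuous in all variables, because $f$ is continuous and positive and $T_{1,\bm{\omega}}$ depends continuously on $\bm{\omega}$ (the denominator $1+\omega_1\cdot\omega_2$ stays bounded away from zero on the relevant range). Second, $b_3>0$ on an open set of angles; for the hard-sphere kernel this is the set $\bar{\bm u}\cdot\bm\omega\neq0$, which is dense. If the positivity set is only open and not dense, I would instead appeal to the version of Proposition \ref{prop:collision_invariant} whose proof in \cite{MR4337060} only uses collisions from an open family of directions. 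In either case $\psi=a+\mathbf b\cdot v+c|v|^2$. Integrability of $f=e^{\psi}$ then forces $c<0$, and completing the square gives the stated Maxwellian with $T=-1/(2c)$, $U=-\mathbf b/(2c)$ and $R$ determined by $a$.

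The converse direction is immediate. For a Maxwellian, $\ln(ff_1f_2)$ is a combination of $1$, $v+v_1+v_2$ and $|v|^2+|v_1|^2+|v_2|^2$, all of which are invariant under the collision laws by \eqref{t conservation}. Hence $f^*f_1^*f_2^*=ff_1f_2$ pointwise and $D_T(f)=0$.

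For (ii), suppose first that $Q_T(f,f,f)=0$. Then $D_T(f)=\int Q_T(f,f,f)\ln f\,dv=0$, and part (i) gives that $f$ is Maxwellian. Conversely, if $f$ is Maxwellian then $f^*f_1^*f_2^*=ff_1f_2$ and $f^{1*}f_1^{1*}f_2^{1*}=ff_1f_2$, the latter by \eqref{t conservation} applied to the adjacent law. Both integrands in \eqref{Q_T(f, g, h)} then vanish identically, so $Q_T(f,f,f)=0$.
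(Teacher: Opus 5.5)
The paper gives no proof of this proposition; it imports it from Section 5.4 of \cite{MR4337060}. Your main line is the standard argument used there: test Proposition~\ref{prop:entropy}(i) with $\phi=\ln f$, use $(X-Y)(\ln X-\ln Y)\ge 0$, then apply Proposition~\ref{prop:collision_invariant} to $\psi=\ln f$. For the hard-sphere kernel of that source, or any kernel with $\{b_3>0\}$ open and dense, your passage from an a.e.\ identity to one valid for every $(\bm\omega,v,v_1,v_2)$ is correct. This relies on continuity of $f$, which you assume tacitly and the statement omits. Both Proposition~\ref{prop:collision_invariant} (stated for continuous $\phi$) and the continuity of $R,T,U$ need it.

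The step you should not rely on is the fallback for kernels whose positivity set is not dense. No ``open family of directions'' version of Proposition~\ref{prop:collision_invariant} is available to cite. The simplest proof of that proposition reduces to the binary case through the degenerate directions $\bm\omega=(\omega_1,0)$. For these, $T_{1,\bm\omega}$ is exactly the binary collision of $(v,v_1)$, with $v_2$ a spectator. This family is a null set in $\mathbb{S}^5$ and need not lie in the closure of $\{b_3>0\}$. So that branch needs a genuine argument of its own, or else an explicit non-degeneracy hypothesis on $b_3$.

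Some hypothesis of this kind is unavoidable. $b_3\equiv 0$ meets every standing assumption the paper places on the ternary kernel, yet then $Q_T\equiv 0$ and $D_T\equiv 0$, so the ``only if'' directions fail. Your suspicion that the statement silently relies on the specific kernel of \cite{MR4337060} is therefore well founded.
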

		   
		   \begin{proposition}[Local Conservation Laws]\label{prop:conservation}
		   	Let \( f \) be a solution to the symmetrized ternary Boltzmann equation, i.e., 
		   	$$\partial_t f + v \cdot \nabla_x f =Q_{T}(f,f,f). $$
		   	The following local conservation laws hold:
		   	\begin{enumerate}
		   		\item[(\romannumeral 1)] {Mass conservation}:
		   		\[
		   		\partial_t \int_{\mathbb{R}^3} f \, dv + \nabla_x \cdot \int_{\mathbb{R}^3} f v \, dv = 0.
		   		\]
		   		\item[(\romannumeral 2)] {Momentum conservation}:
		   		\[
		   		\partial_t \int_{\mathbb{R}^3} v f \, dv + \nabla_x \cdot \int_{\mathbb{R}^3} f \, v \otimes v \, dv = 0. 
		   		\]
		   		\item[(\romannumeral 3)] {Energy conservation}:
		   		\[
		   		\partial_t \int_{\mathbb{R}^3} |v|^2 f \, dv + \nabla_x \cdot \int_{\mathbb{R}^3} |v|^2 f v \, dv = 0. 
		   		\]
		   	\end{enumerate}     
		   \end{proposition}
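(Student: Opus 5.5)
The statement is the local conservation laws for the symmetrized ternary Boltzmann equation. My plan is to reduce them to the vanishing of the velocity moments of $Q_T$ against collision invariants. I integrate $\partial_t f+v\cdot\nabla_x f=Q_T(f,f,f)$ against $\phi(v)\in\{1,v_1,v_2,v_3,|v|^2\}$ over $\mathbb{R}^3_v$. For $f$ with enough velocity decay and regularity, I interchange $\partial_t$ and $\nabla_x$ with the $v$-integral. The transport term then gives $\partial_t\int\phi f\,dv+\nabla_x\cdot\int \phi f v\,dv$. It remains to show $\int_{\mathbb{R}^3}Q_T(f,f,f)\phi\,dv=0$.

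For that identity I use the weak formulation of Proposition~\ref{prop:entropy}(i). Written as
\[
\frac12\int B_3(\bm u,\bm\omega)\,(f^*f_1^*f_2^*-ff_1f_2)\,(\phi+\phi_1+\phi_2-\phi^*-\phi_1^*-\phi_2^*)\,d\bm\omega\,dv_1\,dv_2\,dv,
\]
it vanishes as soon as $\phi+\phi_1+\phi_2=\phi^*+\phi^*_1+\phi^*_2$. For $\phi=1$ this is trivial. For $\phi=v$ and $\phi=|v|^2$ it is exactly the momentum and energy conservation \eqref{t conservation} under the central law $T_{1,\bm\omega}$, which is the "if" direction of Proposition~\ref{prop:collision_invariant}.

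The one delicate point is that the weak formulation must correctly absorb the adjacent term carrying $B_3(\bm u_1,\bm\omega)$ and $T_{2,\bm\omega}$. If I re-derive it rather than cite it, I do so in three steps.
\begin{enumerate}
\item In the adjacent integral, swap $(v,v_1)\mapsto(v_1,v)$, turning $\bm u_1$ into $\bm u$ and $T_{2,\bm\omega}$ into $T_{1,\bm\omega}$ with roles relabeled.
\item Symmetrize in $(\omega_1,v_1)\leftrightarrow(\omega_2,v_2)$ to produce $\phi_1+\phi_2$ from the factor $2\phi_1$.
\item Apply the involution $T_{1,\bm\omega}$, which has unit Jacobian, preserves $\mu$-type products, and leaves $B_3$ invariant by \eqref{invariance of the collision kernel}. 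This produces the antisymmetrized factor $\tfrac12(\ldots-\phi^*-\phi_1^*-\phi_2^*)$.
\end{enumerate}
This mirrors the computation of $I+II$ already done in the proof of Proposition~\ref{prop:LT_properties}, with $\mu^{-1/2}g$ replaced by $\phi$.

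The main obstacle is justifying the changes of variables and the Fubini/differentiation interchanges. This needs integrability of $B_3|f f_1 f_2|(1+|v|^2)$. Using \eqref{cutoff b3} and \eqref{tildeu-u}, I bound the $\bm\omega$-integral by $\|b_3\|\,|\tilde{\bm u}|^{\gamma_3}$ and assume sufficient polynomial moments of $f$. For $\gamma_3<0$, I additionally assume local integrability, which Lemma~\ref{Convolution estimates} provides for $\gamma_3>-6$. Under these hypotheses every step is absolutely convergent.
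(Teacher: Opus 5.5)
Your proposal is correct and follows essentially the route the paper relies on. The paper gives no separate argument and defers to Section 5.4 of \cite{MR4337060}, where the local conservation laws come from testing the equation against $1$, $v$, $|v|^2$ and applying the weak formulation of Proposition~\ref{prop:entropy}(i) together with the collision-invariance of these functions, exactly as you do; your re-derivation of that weak form (swap $v\leftrightarrow v_1$ in the adjacent term, symmetrize $(\omega_1,v_1)\leftrightarrow(\omega_2,v_2)$, then antisymmetrize under $T_{1,\bm\omega}$) is also correct, and you rightly identify that the factor $2$ on the adjacent term is what turns $\phi-\phi^*$ into the full invariant combination $\phi+\phi_1+\phi_2-\phi^*-\phi_1^*-\phi_2^*$.
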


		  \paragraph{A.2. Convolution estimates.}

		   \begin{lemma}\label{Convolution estimates}
		     	Let \( d \) be a positive integer. For \( z \in \mathbb{R}^d \), \( q > -d \), and \( \rho > 0 \), the following asymptotic equivalence holds:  
		     	\[
		     	\int_{\mathbb{R}^d} |z - y|^q e^{-\rho |y|^2} \, dy \sim \langle z \rangle^q,
		     	\]  
		     	where \( \langle z \rangle := \sqrt{1 + |z|^2} \). 
		   \end{lemma}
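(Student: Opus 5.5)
The plan is to reduce to the one-dimensional radial structure and split the integration domain according to the size of $|y|$ relative to $|z|$. By rotation invariance the integral $F(z):=\int_{\mathbb{R}^d}|z-y|^q e^{-\rho|y|^2}\,dy$ depends only on $|z|$. It is finite for every $z$, since $q>-d$ makes $|z-y|^q$ locally integrable and the Gaussian controls large $|y|$. It is also continuous and strictly positive in $z$, by dominated convergence together with local uniform integrability of $|z-y|^q$. On the compact set $\{|z|\le 2\}$ we therefore have $F(z)\sim 1\sim\langle z\rangle^q$, and the whole matter reduces to the regime $|z|\ge 2$.

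For $|z|\ge 2$ we split $\mathbb{R}^d=\{|y|\le |z|/2\}\cup\{|y|>|z|/2\}$. On the first region $|z|/2\le|z-y|\le 3|z|/2$, so $|z-y|^q\sim|z|^q\sim\langle z\rangle^q$ regardless of the sign of $q$. Since $\int_{|y|\le |z|/2}e^{-\rho|y|^2}dy\ge\int_{|y|\le1}e^{-\rho|y|^2}dy>0$, this region alone gives the lower bound $F(z)\gtrsim\langle z\rangle^q$. It also contributes $\lesssim\langle z\rangle^q$ to the upper bound.

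The second region is the main obstacle, and it is where the sign of $q$ matters. There $e^{-\rho|y|^2}\le e^{-\rho|y|^2/2}e^{-\rho|z|^2/8}$.
\begin{itemize}
\item If $q\ge0$, we use $|z-y|^q\lesssim |z|^q+|y|^q$, and the result is $\lesssim e^{-\rho|z|^2/8}(|z|^q+1)\lesssim\langle z\rangle^q$.
\item If $-d<q<0$, we bound the contribution by $e^{-\rho|z|^2/8}\int_{\mathbb{R}^d}|z-y|^q e^{-\rho|y|^2/2}dy$. We estimate this last integral uniformly in $z$ by splitting into $|z-y|\le1$, which gives at most $\int_{|w|\le1}|w|^q dw<\infty$ precisely because $q>-d$, and $|z-y|>1$, where $|z-y|^q\le1$ and the Gaussian integral is finite. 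The contribution is then $\lesssim e^{-\rho|z|^2/8}\lesssim\langle z\rangle^q$, since Gaussian decay beats any polynomial.
\end{itemize}

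Combining the two regions for $|z|\ge2$ with the compact case yields $F(z)\sim\langle z\rangle^q$. The implicit constants depend only on $d$, $q$ and $\rho$.
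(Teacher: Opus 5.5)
Your proof is correct and complete, but it is organized differently from the paper's.

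The paper proves only the case $q\in(-d,0]$ and refers to the computation in Glassey's book for $q>0$. It splits according to the distance to the singularity.
\begin{itemize}
\item On $\{|z-y|\le 1\}$ it notes that $|y|\ge |z|/2$ once $|z|\ge 2$, so the Gaussian supplies the factor $e^{-\rho|z|^2/4}$.
\item On $\{|z-y|>1\}$ it uses the Peetre-type inequality $\langle z\rangle\lesssim|z-y|\langle y\rangle$ to get $|z-y|^q\lesssim\langle z\rangle^q\langle y\rangle^{-q}$.
\item The lower bound is read off from $\{|y|\le 1\}$ via $|z-y|^q\ge(|z|+1)^q$. This holds for every $z$ but needs $q\le 0$.
\end{itemize}

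You instead split by the size of $|y|$ relative to $|z|$. The inner region $\{|y|\le|z|/2\}$ gives the upper and lower bounds at once, because $|z-y|\sim|z|$ there whatever the sign of $q$. The outer region is exponentially small, and the sign of $q$ matters only in how you bound the leftover integral.

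What your route buys is a self-contained proof for the whole range $q>-d$, including $q>0$, which the paper defers to Glassey. The price is the separate soft argument on $\{|z|\le 2\}$ via continuity and positivity of $F$, whereas the paper's estimates are explicit and uniform in $z$. That soft step is valid. For $q<0$, continuity is cleanest after substituting $w=z-y$, which fixes the singularity so dominated convergence applies directly. It could also be replaced by explicit bounds, e.g.\ the paper's lower bound on $\{|y|\le 1\}$ when $q\le 0$.
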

		   \begin{proof}  
		   	The case for \( q > 0 \) can be derived analogously to the proof for \( q = 1,\ d=3 \) in (3.54) of \cite{Glassey1996}, thus we focus here on the case where \( q \in (-d, 0] \).
		   		We analyze the integral by decomposing the domain into two regions: \( |z - y| \leq 1 \) and \( |z - y| > 1 \). The proof proceeds in two directions to establish upper and lower bounds.
		   		
		   		\noindent \textit{Upper Bound.}  
		   		Split the integral:
		   		\[
		   		\int_{\mathbb{R}^d} |z - y|^q e^{-\rho |y|^2} \, dy = \underbrace{\int_{|z - y| \leq 1} |z - y|^q e^{-\rho |y|^2} \, dy}_{I_1} + \underbrace{\int_{|z - y| > 1} |z - y|^q e^{-\rho |y|^2} \, dy}_{I_2}.
		   		\]
		   		For \( I_1 \), observe that \( |z - y| \leq 1 \) implies \( |y| \geq |z| - 1 \). If \( |z| \ge 2 \), then \( |y| \geq \frac{|z|}{2} \) by the reverse triangle inequality. Consequently:
		   		\[
		   		e^{-\rho |y|^2} \leq e^{-\frac{\rho}{4} |z|^2}.
		   		\]
		   		Thus,
		   		\[
		   		I_1 \leq e^{-\frac{\rho}{4} |z|^2} \int_{|z - y| \leq 1} |z - y|^q \, dy \lesssim e^{-\frac{\rho}{4} |z|^2} \lesssim \langle z \rangle^q,
		   		\]
		   		where the last inequality follows from the exponential decay dominating polynomial growth.
		   		The case where \(|z| \leq 2\) is trivial: the inequality \(I_1 \lesssim 1 \lesssim\langle z\rangle^q\) holds directly.
		   		For \( I_2 \), utilize the inequality \( \langle z \rangle \lesssim \langle z - y \rangle \langle y \rangle \lesssim |z - y| \langle y \rangle \), valid for \( |z - y| \geq 1 \). This implies:
		   		\[
		   		|z - y|^q \lesssim \frac{\langle z \rangle^q}{\langle y \rangle^q}.
		   		\]
		   		Therefore,
		   		\[
		   		I_2 \lesssim \langle z \rangle^q \int_{\mathbb{R}^d} \langle y \rangle^{-q} e^{-\rho |y|^2} \, dy \lesssim \langle z \rangle^q,
		   		\]
		   		since \( \langle y \rangle^{-q} e^{-\rho |y|^2} \) is integrable for \( q > -d \).
		   		Combining \( I_1 \) and \( I_2 \):
		   		\[
		   		\int_{\mathbb{R}^d} |z - y|^q e^{-\rho |y|^2} \, dy \lesssim \langle z \rangle^q.
		   		\]
		   		
		   		\noindent \textit{Lower Bound.}  
		   		Consider the region \( |y| \leq 1 \). Here, \( |z - y| \leq |z| + 1 \), and since \( q \leq 0 \):
		   		\[
		   		|z - y|^q \geq (|z| + 1)^q \gtrsim \langle z \rangle^q.
		   		\]
		   		Thus,
		   		\[
		   		\int_{\mathbb{R}^d} |z - y|^q e^{-\rho |y|^2} \, dy \geq \int_{|y| \leq 1} |z - y|^q e^{-\rho |y|^2} \, dy \gtrsim \langle z \rangle^q \int_{|y| \leq 1} e^{-\rho |y|^2} \, dy \gtrsim \langle z \rangle^q.
		   		\]
		   		
		   		\noindent The upper and lower bounds together yield:
		   		\[
		   		\int_{\mathbb{R}^d} |z - y|^q e^{-\rho |y|^2} \, dy \sim \langle z \rangle^q.
		   		\]

		   \end{proof}
	   
	    \paragraph{A.3. Sobolev inequalities.}
	    
	    We state some standard Sobolev inequalities that are frequently used in this paper.

		\begin{lemma}\label{Sobolev inequalities}
			Let \( u \in H^2(\mathbb{R}^3) \). Then
			\begin{enumerate}
				\item [(i)] 
				$ \|u\|_{L^\infty} \lesssim\|\nabla u\|^{\frac{1}{2}}_{L^2} \|\nabla^2 u\|^{\frac{1}{2}}_{L^2} \lesssim\|\nabla u\|_{H^1}; $ 
				\item [(ii)] 
				$ \|u\|_{L^6} \lesssim \|\nabla u\|_{L^2}; $ 
				\item [(iii)] 
				$ \|u\|_{L^q} \lesssim \|u\|_{H^1}, \quad 2 \leq q \leq 6.  $
			\end{enumerate}
		\end{lemma}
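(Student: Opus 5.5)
The plan is to prove the three Sobolev inequalities in Lemma \ref{Sobolev inequalities} by classical whole-space arguments in $\mathbb{R}^3$. Since the statement is at the level of functions, I would first establish each estimate for $u\in C_c^\infty(\mathbb{R}^3)$. I would then pass to $u\in H^2(\mathbb{R}^3)$ by density, using lower semicontinuity of the left-hand norms under $L^2$ (hence a.e. along a subsequence) convergence.

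For (ii), I would invoke the Gagliardo--Nirenberg--Sobolev inequality $\|u\|_{L^{p^*}}\lesssim\|\nabla u\|_{L^p}$ with $p=2$, $p^*=\frac{3p}{3-p}=6$. The quickest self-contained route is to apply the $L^1$ estimate $\|w\|_{L^{3/2}}\lesssim\|\nabla w\|_{L^1}$ to $w=|u|^{4}$. Hölder's inequality then gives $\|u\|_{L^6}^4\lesssim\|u\|_{L^6}^{3}\|\nabla u\|_{L^2}$, and dividing by $\|u\|_{L^6}^3$ yields (ii).

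Statement (iii) then follows by interpolation. For $2\le q\le 6$, write $\frac1q=\frac{\theta}{2}+\frac{1-\theta}{6}$, so Hölder gives $\|u\|_{L^q}\le\|u\|_{L^2}^{\theta}\|u\|_{L^6}^{1-\theta}\lesssim\|u\|_{L^2}+\|\nabla u\|_{L^2}\lesssim\|u\|_{H^1}$.

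For (i), which I expect to be the main step, I would use a frequency splitting. Since $u\in H^2$, we have $\hat u\in L^1$, and $\|u\|_{L^\infty}\le(2\pi)^{-3}\|\hat u\|_{L^1}$. Fix $R>0$ and split the frequency integral at $|\xi|=R$. On $\{|\xi|\le R\}$, Cauchy--Schwarz with weight $|\xi|$ gives
\[
\int_{|\xi|\le R}|\hat u|\,d\xi\le\Big(\int_{|\xi|\le R}|\xi|^{-2}d\xi\Big)^{1/2}\||\xi|\hat u\|_{L^2}\lesssim R^{1/2}\|\nabla u\|_{L^2}.
\]
This is where the dimension enters: $|\xi|^{-2}$ is integrable near $0$ in $\mathbb{R}^3$. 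On $\{|\xi|>R\}$, the weight $|\xi|^2$ gives
\[
\int_{|\xi|>R}|\hat u|\,d\xi\le\Big(\int_{|\xi|>R}|\xi|^{-4}d\xi\Big)^{1/2}\||\xi|^2\hat u\|_{L^2}\lesssim R^{-1/2}\|\nabla^2 u\|_{L^2}.
\]
Optimizing with $R=\|\nabla^2u\|_{L^2}/\|\nabla u\|_{L^2}$ gives $\|u\|_{L^\infty}\lesssim\|\nabla u\|_{L^2}^{1/2}\|\nabla^2u\|_{L^2}^{1/2}$. If $\nabla u\equiv 0$, then $u=0$ in $H^2$ and the estimate is trivial. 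The second inequality in (i) follows from Young's inequality $ab\le\frac12(a^2+b^2)$, which gives $\|\nabla u\|_{L^2}^{1/2}\|\nabla^2u\|_{L^2}^{1/2}\lesssim\|\nabla u\|_{L^2}+\|\nabla^2u\|_{L^2}\sim\|\nabla u\|_{H^1}$.

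The only delicate point is in (i) and (ii): the estimates must use homogeneous norms that involve no $\|u\|_{L^2}$. This matters later because $\mathcal D_N$ lacks zeroth-order macroscopic terms. The Fourier splitting and the $|u|^4$ trick both respect this scaling, so the constants depend on nothing except the dimension.
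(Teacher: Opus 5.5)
The paper gives no proof of this lemma; it just records the three inequalities as standard. Your argument supplies what the paper takes for granted, and it is correct.

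The key computations check out. For (i), $\int_{|\xi|\le R}|\xi|^{-2}\,d\xi=4\pi R$ and $\int_{|\xi|>R}|\xi|^{-4}\,d\xi=4\pi/R$, so the choice $R=\|\nabla^2u\|_{L^2}/\|\nabla u\|_{L^2}$ gives the estimate. For (ii), with $w=|u|^4$ one has $\|w\|_{L^{3/2}}=\|u\|_{L^6}^4$ and $\|\nabla w\|_{L^1}\le 4\|u\|_{L^6}^3\|\nabla u\|_{L^2}$. Statement (iii) is then H\"older plus Young.

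Two small points on (i). You do not need density there, since $u\in H^2(\mathbb{R}^3)$ already gives $\hat u\in L^1$, so $u$ agrees a.e. with the continuous Fourier inversion of $\hat u$. Your degenerate case is also fine, because $\nabla u\neq0$ forces $\nabla^2u\neq0$, so $R\in(0,\infty)$.

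Your route also gives something a bare citation does not. The paper never applies the lemma to scalar functions, only in mixed norms such as $\|g\|_{L^\infty_xL^2_{\mathcal D}}\lesssim\|\nabla_xg\|_{H^1_xL^2_{\mathcal D}}$ and $\|g\|_{L^6_xL^2_{\mathcal D}}\lesssim\|\nabla_xg\|_{L^2_xL^2_{\mathcal D}}$. Both of your arguments carry over verbatim to functions with values in the Hilbert space $L^2_v$ or $L^2_{\mathcal D}$. Plancherel holds there, and $w(x)=\|u(x,\cdot)\|_{L^2_v}^4$ still satisfies $|\nabla w|\le4\|u\|_{L^2_v}^3\|\nabla u\|_{L^2_v}$. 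From the scalar statement alone one needs an extra step, such as $\|g\|_{L^\infty_xL^2_v}\le\|g\|_{L^2_vL^\infty_x}$ followed by Fubini.

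You also rightly stress that (i) and (ii) are homogeneous, with no $\|u\|_{L^2}$ on the right. That is exactly what the paper needs, since $\mathcal{D}_N$ contains no zeroth-order macroscopic part.
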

		
	    \end{appendices}

		
		
    	\noindent{\bf Acknowledgements.} \   The authors would like to thank the anonymous referees for helpful comments and suggestions to improve the quality of redaction. The research of Linjie Xiong is supported by the National Natural Science Foundation of China (Grant No. 12231006) and the Fundamental Research Funds for the Central Universities.

		

		

	\end{document}